\documentclass{amsart}
\usepackage{amsmath}
\usepackage{amsfonts}
\usepackage{amssymb}
\newtheorem{theorem}{Theorem}[section]
\newtheorem{theo}{Theorem}
\newtheorem{lm}[theorem]{Lemma}
\newtheorem{cor}[theorem]{Corollary}
\newtheorem{df}[theo]{Definition}
\newtheorem{rem}[theorem]{Remark}
\newtheorem{pr}[theorem]{Proposition}

\begin{document}

\title{Supersymmetric elements in divided powers algebras}
\author{Franti\v sek Marko}
\address{Pennsylvania State University \\ 76 University Drive \\ Hazleton, PA 18202 \\ USA}
\email{fxm13@psu.edu}
\begin{abstract}
Description of adjoint invariants of general Linear Lie superalgebras $\mathfrak{gl}(m|n)$ by Kantor and Trishin is given in terms of supersymmetric polynomials. 
Later, generators of invariants of the adjoint action of the general linear supergroup $GL(m|n)$ and generators of supersymmetric polynomials were determined over fields of positive characteristic. 
In this paper, we introduce the concept of supersymmetric elements in the divided powers algebra $Div[x_1, \ldots, x_m,y_1, \ldots, y_n]$, and give a characterization of supersymmetric elements via a system of linear equations. Then we determine generators of supersymmetric elements for divided powers algebras in the cases when $n=0$, $n=1$, and 
$m\leq 2, n=2$.   
\end{abstract}
\maketitle

\section*{Introduction and notation}

We start by recalling a description of invariants of conjugacy classes of matrices which is a classical problem in the invariant theory - see Chapter 1 of \cite{springer}. 
Let $K$ be an infinite field of characteristic zero, $V$ be a $K$-space of dimension $m$, $E$ be the space of all $K$-linear maps of $V$ 
(a choice of a basis of $V$ identifies $E$ with $m\times m$-matrices). The general linear group $GL(V)$
acts on $E$ via conjugation $g.a=gag^{-1}$ for $g\in GL(V)$ and $a\in E$ (corresponding to a change of basis of $V$).

Consider the space $S(E)$ of all $K$-valued polynomial functions on $E$ together with the action of $GL(V)$ given as $g.f(a)=f(g^{-1}a)$ for $g\in GL(V)$, $f\in S(E)$ and $a\in E$.
By Chevalley's restriction theorem (see Theorem 1.5.7 of \cite{springer}), the ring of invariants $S(E)^{GL(V)}$ is isomorphic to the ring of symmetric polynomials $K[x_1, \ldots, x_m]$.
Here the variables $x_1, \ldots, x_m$ are related to the coefficients $\sigma_i(M)$ of the characteristic polynomial of $m\times m$-matrices $M$.

The vertices of the Dynkin diagram $\Delta=A_m$ corresponding to $GL(m)$ are given by simple positive roots that can be labeled by $1, \ldots, m$. The action of the Weyl group $W$ permutes the elements of the set $\{1, \ldots, m\}$. If we denote by $P$ the ring of polynomials in variables $x_1, \ldots, x_m$, then the ring of symmetric polynomials
$P^W$ consists of invariants of $P$ under the action induced by the Weyl group $W$.

The above classical correspondence was extended to the case of general linear superalgebras $GL(m|n)$ in characteristic zero by Kantor and Trishin in \cite{kanttrish}. They have described the 
polynomial invariants of general linear Lie supergroup $\mathfrak{gl}(m|n)$ and their connection to the algebra $A_s$ of supersymmetric polynomials. 
The algebra $A_s$ consists of polynomials $f (x|y) = f (x_1, \ldots , x_m, y_1, \ldots , y_n)$ that are symmetric in variables
$x_1, \ldots, x_m$ and $y_1, \ldots, y_n$ separately, such that $\frac{d}{dT} f (x|y)|_{x_1=y_1=T}=0$.
Before the paper of Kantor and Trishin, motivated by a work of Kac and Schneuert, Stembridge proved a conjecture of Schneuert and described generators of $A_s$ in \cite{stem}.

An analogous problem for supergroups in the case of positive characteristic has been investigated first by La Scala and Zubkov in \cite{lascala}.
A complete description of generators of polynomial invariants of the adjoint action of the general linear supergroup $G=GL(m|n)$ and generators of $A_s$ was obtained by 
Grishkov, Marko and Zubkov in \cite{grmazu}. 
%The main tool used was the Chevalley map $\phi: K[G] \to A =K[x_1,\ldots , x_m|y_1,\ldots, y_n]$ 
%defined on entries of a generic matrix $C$ by $\phi(c_{ij}) = \delta_{ij}x_i$ for $1\leq i \leq m$ and
%and $\phi(c_{ij}) = \delta_{ij} y_{i-m}$ for $m + 1 \leq i \leq m + n$. The restriction of $\phi$ to $R$ is an injective map and its image is contained in the algebra $A_s$ of %supersymmetric polynomials.

Since the universal enveloping algebra of a Lie superalgebra $\mathfrak{gl}(m|n)$ is isomorphic to (noncommuting) polynomials, while the universal enveloping algebra of its Cartan subsuperalgebra  $\mathfrak{h}$ is isomorphic to the ring $K[x_1, \ldots, x_m|y_1, \ldots, y_n]$, the invariants of this ring are the correct objects to consider 
when the characteristic of $K$ is zero.

If the characteristic $p$ of the field $K$ is positive, then instead of the universal enveloping algebra of a Lie superalgebra $\mathfrak{gl}(m|n)$ one considers the distribution algebra $Dist(G)$ of $G=GL(m|n)$ and its Frobenius kernels. The basis of the distribution algebra of a maximal torus $T$ of $G$ 
consists of products of binomial elements of the type $\binom{t}{a}= \prod_{i=1}^m \binom{t_i}{a_i}$, where $\binom{x}{a}=\frac{x(x-1)\ldots (x-a+1)}{a!}$, while 
the basis of the distribution algebra of the unipotent subsupergroups $U^+$ and $U^-$, corresponding to unipotent upper and lower triangular matrices of $G$, 
respectively, consists of products of 
divided powers $e^{(b)}=\prod_{i=1}^s e_i^{(b_i)}$, where $x^{(a)}= \frac{x^a}{a!}$.

From now on, the characteristic $p$ of the ground field $K$ is positive. 

\begin{df}
Denote by $Div[x,y]=Div[x_1, \ldots, x_m|y_1, \ldots, y_n]$ the algebra of divided power elements generated by two groups of commuting variables 
$x_1, \ldots, x_m$ and $y_1, \ldots, y_n$. The algebra structure is given via $z^{(a)}z^{(b)}=\binom{a+b}{a}z^{(a+b)}$ for $z=x_1, \ldots, x_m;y_1, \ldots, y_n$.
Thus $Div[x,y]$ is spanned by the products 
$x_1^{(a_1)}\ldots x_m^{(a_m)}$ $y_1^{(b_1)}\ldots y_n^{(b_n)}$, where the exponents $a_i$ and $b_j$ are non-negative integers.  
Define the degree $d$ of the element $x_1^{(a_1)}\ldots x_m^{(a_m)}y_1^{(b_1)}\ldots y_n^{(b_n)}$ to be the sum $\sum_{i=1}^m a_i +\sum_{j=1}^n b_j$. 
Denote by $Div_k[x,y]$ the space of all elements of $Div[x,y]$ of degree $k$. 
\end{df}
It is clear that $Div[x,y]$ is a graded algebra, where the grading is given by the degrees $d$.

We repeatedly use the property of divided powers that $z^{(a)}z^{(b)}=0$ if there is a $p$-adic carry when $a$ is added to $b$; $z^{(a)}z^{(b)}$ is a nonzero scalar multiple of $z^{(a+b)}$ otherwise.

Using linearity, it is easy to verify that 
\[[(\frac{d}{d x_1}+\frac{d}{d y_1})f(x|y)]|_{x_1=y_1=T}=\frac{d}{dT} f (x|y)|_{x_1=y_1=T}.\]
Therefore we can replace the condition $\frac{d}{dT} f (x|y)|_{x_1=y_1=T}=0$ from the definition of the supersymmetric polynomial by the equivalent condition 
\[(\frac{d}{d x_1}+\frac{d}{d y_1})f\equiv 0 \pmod{(x_1-y_1)}.\]
Now we are ready to define supersymmetric elements in $Div[x,y]$.

\begin{df}
Define the derivations $\frac{d}{d x_i}$ and $\frac{d}{d y_j}$ of $Div[x,y]$ via 
\[\frac{d}{d x_i}(g^{(l)})=g^{(l-1)}\frac{d}{d x_i}(g), \frac{d}{d x_i}(x_s)=\delta_{is}, \frac{d}{d x_i}(y_j)=0,\]
and
\[\frac{d}{d y_j}(g^{(l)})=g^{(l-1)}\frac{d}{d y_j}(g), \frac{d}{d y_j}(y_s)=\delta_{js}, \frac{d}{d y_j}(x_i)=0\]
for every $g\in Div[x,y]$, $1\leq i \leq m$ and $1\leq j\leq n$.
If $f\in Div_k[x,y]$ is symmetric in variables $x_1, \ldots, x_m$ 
and $y_1, \ldots, y_n$ separately, and 
%for every $1\leq i\leq m$ and $1\leq j\leq n$ 
there is $f'\in Div_{k-2}[x,y]$ such that
\begin{equation}\label{star}
(\frac{d}{d x_1}+\frac{d}{d y_1})f=(x_1-y_1) f',  \tag{$\star$} 
\end{equation}
then $f$ is called {\it supersymmetric}.
\end{df}

The supersymmetric elements form a graded subalgebra of $Div[x, y]$, which is denoted by $\mathcal{S}$. 
Its homogeneous component of degree $k$ is denoted by $\mathcal {S}_k$.

The structure of the paper is as follows. 
In Section 1, we determine the symmetric elements in the even divided power algebra $Div[x_1, \ldots, x_m]$.
In Section 2, we determine the supersymmetric elements in the divided power algebra $Div[x_1,y_1]$ which serve as a bridge to the general supersymmetric case.
In Section 3, we characterize supersymmetric elements in $Div[x,y]$ using the concepts of marked and unmarked monomials. 
In Section 4, we describe generators of supersymmetric elements in $Div[x_1, \ldots, x_m,y_1]$.
In Sections 5 and 6 we describe generators of supersymmetric elements in $Div[x_1,y_1,y_2]$ and $Div[x_1,x_2, y_1, y_2]$, respectively.

\section{Symmetric elements in the divided power algebra $Div[x_1, \ldots, x_m]$}

Let $\Sigma_m$ be the symmetric group on $m$ elements. It acts on the purely even algebra $Div[x]=Div[x_1, \ldots, x_m]$ of divided powers by permuting variables $x_1, \ldots, x_m$.

In this section, we describe invariants $Div[x]^{\Sigma_m}$, which we call symmetric elements of $Div[x]$.

For an $m$-tuple $\mu=(\mu_1, \ldots, \mu_m)$ of non-negative integers denote its degree $|\mu|=\sum_{i=1}^m \mu_i$. Then every homogeneous element $f\in Div_k[x]$
can be expressed as $f=\sum_{\mu, |\mu|=k}f_{\mu} x^{(\mu)}$.
Order monomials $x^{(\mu)}$ of degree $k$ legicographically.
If $f\in Div_k[x]^{\Sigma_m}$, then the largest  monomial $x^{(\lambda)}$ for which $f_{\lambda}\neq 0$ satisfies 
\[\lambda_1\geq\lambda_2\geq\ldots\geq\lambda_m\geq 0.\] The corresponding monomial $x^{(\lambda)}$ is called the {\it leading term} of $f$.

For $a_1\geq \ldots \geq a_m\geq 0$ denote by $f_{a_1, \ldots, a_m}$ the unique symmetric function 
that has the leading term $x_1^{(a_1)}\ldots x_m^{(a_m)}$ and all other terms of the form 
$x_{\sigma(1)}^{(a_1)} \ldots x_{\sigma(m)}^{(a_m)}$ for some $\sigma\in \Sigma_m$.
For example, 
if $a_1>\ldots >a_m\geq 0$, then 
\[f_{a_1, \ldots, a_m}= \sum_{\sigma\in \Sigma_m} x_{\sigma(1)}^{(a_1)} \ldots x_{\sigma(m)}^{(a_m)}\]

\begin{pr}\label{pr1}
The algebra $Div[x]^{\Sigma_m}$ is generated by the set $D$ of all elements $f_{q,a_2, \ldots, a_m}$, where $q=p^s$ for an arbitrary non-negative integer $s$ and arbitrary integers 
$q\geq a_2\geq \ldots \geq a_m\geq 0$.
The set $D$ is minimal in the sense that none of the generators can be left out.
\end{pr}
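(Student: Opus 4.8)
The plan is to work throughout in the basis of $Div[x]^{\Sigma_m}$ consisting of the functions $f_\lambda=f_{\lambda_1,\dots,\lambda_m}$ indexed by partitions $\lambda_1\ge\cdots\ge\lambda_m\ge0$, and to control products through their leading terms. The first step is to record two consequences of the carry rule $z^{(a)}z^{(b)}=\binom{a+b}{a}z^{(a+b)}$. A rearrangement argument shows that for sorted $\mu,\nu$ the lexicographically largest monomial that can occur in $f_\mu f_\nu$ is $x^{(\mu+\nu)}$, its coefficient being $\prod_i\binom{\mu_i+\nu_i}{\mu_i}$ together with the contributions of the permutations $\pi,\tau$ with $\pi\mu+\tau\nu=\mu+\nu$. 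Specializing to $\nu=(p^s,0,\dots,0)$ gives a Pieri-type rule: since $x_i^{(a)}x_i^{(p^s)}=(\mathrm{digit}_s(a)+1)\,x_i^{(a+p^s)}$ when $\mathrm{digit}_s(a)<p-1$ and vanishes otherwise, one obtains
\[ f_{p^s}\cdot f_\mu=\sum_{\nu}c_\nu f_\nu, \]
where $\nu$ ranges over the partitions gotten from $\mu$ by increasing one part $\mu_j$ with $\mathrm{digit}_s(\mu_j)<p-1$ by $p^s$, and $c_\nu$ collects the resulting nonzero coefficients.

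For generation I would use strong induction on the leading term $x^{(\lambda)}$ in the lexicographic order. If $\lambda_1=p^s$, then $f_\lambda$ is already a generator in $D$, since the requirement $q\ge a_2\ge\cdots\ge a_m$ reads $p^s\ge\lambda_2\ge\cdots\ge\lambda_m$ and holds; this is the base case. If $\lambda_1$ is not a power of $p$, let $p^s$ be its lowest nonzero $p$-adic digit, with value $d$, and put $\mu=\lambda-p^s e_1$. When $\lambda_1-p^s\ge\lambda_2$, the largest part of $\mu$ is $\lambda_1-p^s$, so the Pieri rule gives $f_{p^s}f_\mu=(\text{nonzero multiple of }d)\,f_\lambda+\sum_{\nu<_{\mathrm{lex}}\lambda}c_\nu f_\nu$, the term $f_\lambda$ arising by adding $p^s$ back to a part equal to $\lambda_1-p^s$. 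Because $d\not\equiv0\pmod p$, solving for $f_\lambda$ expresses it through $f_{p^s}\in D$, through $f_\mu$, and through functions $f_\nu$ with $\nu<_{\mathrm{lex}}\lambda$, all lying in the algebra generated by $D$ by induction.

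The main obstacle in generation is the case $\lambda_1-p^s<\lambda_2$, where peeling a single $p^s$ off the first coordinate destroys sortedness or lets some $\nu$ overshoot $\lambda$. Here I would instead peel an entire plateau, multiplying by a generator $f_{p^s,p^s,\dots,p^s,b,\dots}$ matching the top block of (nearly) equal parts of $\lambda$, and verify via the carry rule that $x^{(\lambda)}$ occurs with a coefficient that is a unit of $K$. The recurring delicate point is the bookkeeping of this leading coefficient: when $\lambda$ has a part equal to $\lambda_1-p^s$, several aligned permutations contribute to $x^{(\lambda)}$, and one must confirm that these do not cancel modulo $p$; choosing $s$ and the plateau length carefully is what keeps the leading coefficient invertible.

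For minimality I would pass to the indecomposables $\mathcal{A}_+/\mathcal{A}_+^2$, with $\mathcal{A}=Div[x]^{\Sigma_m}$ and $\mathcal{A}_+$ its augmentation ideal, and try to show that no $g=f_{p^s,a_2,\dots,a_m}$ lies in $\mathcal{A}_+^2$ together with the remaining generators. The guiding heuristic is the leading term: the leading monomial of a product of $r\ge2$ generators ought to have first coordinate a no-carry sum of $\ge2$ positive powers of $p$, whose $p$-adic digit sum is at least two and hence is not a single power $p^s$, so it cannot match a generator's leading term. I expect this to be the hardest step, and genuinely the crux of the statement, precisely because that heuristic breaks when $p$-adic carrying annihilates the naive top term of a product: the true leading first coordinate can then drop and must be analyzed case by case, and one has to rule out the possibility that a surviving lower term restores a forbidden leading monomial. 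Controlling these degenerate products, which is where the characteristic-$p$ phenomena concentrate, is the decisive part of the minimality argument.
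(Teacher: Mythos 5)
Your reduction in the case $\lambda_1-p^s\ge\lambda_2$ is correct and is a genuinely different mechanism from the paper's: there, $\mu=\lambda-p^se_1$ is sorted, only the first coordinate of a rearrangement of $\mu$ can be raised by $p^s$ to give the sorted monomial $x^{(\lambda)}$ (for $i\ge 2$ one would need $\lambda_i=\lambda_1$, impossible when $\lambda_2\le\lambda_1-p^s$), so the coefficient of $x^{(\lambda)}$ in $f_{p^s}f_\mu$ is exactly $\binom{\lambda_1}{p^s}\equiv d\not\equiv 0\pmod p$ by Lucas, and every other $f_\nu$ occurring has $\nu<_{\mathrm{lex}}\lambda$. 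But the two places you yourself flag as delicate are genuine gaps, not residual bookkeeping. In the case $\lambda_1-p^s<\lambda_2$ you only state an intention (``peel an entire plateau \ldots{} verify that the coefficient is a unit''): no second factor is specified, no coefficient is computed, and with a Pieri-type expansion there really can be several contributions to $x^{(\lambda)}$ whose sum must be checked modulo $p$. The paper never faces this problem because it does not produce the target as one term of an expansion that could cancel; it factors the monomial $x^{(\lambda)}$ itself, digit by digit from the top, as a carry-free product of leading terms of generators: powers of the full plateaus $f_{p^j,\ldots,p^j}$ (which are single monomials) for the levels $j>t$ at which all parts have equal digits, then at the first level $t$ where the digits differ, powers of the partial plateaus $f_{p^t,\ldots,p^t,0,\ldots,0}$ together with the one generator $f_{p^t,\ldots,p^t,\,a'_{r+1}-b_{r+1,t}p^t,\ldots,\,a'_m-b_{m,t}p^t}$ of $D$. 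Since lex order is translation invariant, the top monomial of a product of symmetric elements can arise only as the product of the factors' top monomials; so once all exponent additions are carry-free, the leading coefficient is a product of nonzero binomial coefficients and no cancellation analysis is needed. Any completion of your plan would essentially have to reinvent this carry-free decomposition.

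For minimality your proposal likewise stops exactly at what you call the crux. The way out is not to track ``the leading term of a product'' (which, as you note, can drop when carries occur) but to argue coefficientwise: in any product of generators $f_{p^{t_i},a_2,\ldots,a_m}$ with all $t_i<s$, the coefficient of \emph{every} monomial whose exponent at $x_1$ equals $p^s$ vanishes identically. Indeed, such a coefficient is a sum over ways of writing $p^s$ as a carry-free sum of one entry from each factor; carry-free addition adds $p$-adic digit sums, the digit sum of $p^s$ is $1$, so exactly one entry would have to equal $p^s$, which is impossible since every entry of those generators is at most $p^{t_i}<p^s$. Hence no element of the subalgebra they generate contains \emph{any} monomial beginning with $x_1^{(p^s)}$ --- the ``surviving lower term'' scenario you worry about cannot occur at all --- and in particular that subalgebra cannot contain $f_{p^s,a_2,\ldots,a_m}$. (To be fair, the paper itself is terse here: it declares the $q=1$ case clear and only addresses products built from smaller $p$-powers, saying nothing about products involving other generators with the same or larger $q$; a fully rigorous minimality proof needs more than either text supplies.) As written, then, your proposal settles one case of generation rigorously but leaves both the remaining generation case and the minimality claim open.
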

\begin{proof}
Denote by $B$ the algebra generated by all elements of $D$. Clearly, $B\subseteq Div[x]^{\Sigma_m}$. The equality $B=Div[x]^{\Sigma_m}$ follows once we show that for every monomial 
$x_1^{(a_1)}\ldots x_m^{(a_m)}$, where $a_1\geq \ldots \geq a_m\geq 0$, 
there is an element of $B$ that has the leading term $x_1^{(a_1)}\ldots x_m^{(a_m)}$. Indeed, take an arbitrary element $g\in Div[x]^{\Sigma_m}$ and assume that its leading term is 
$x_1^{(a_1)}\ldots x_m^{(a_m)}$. There is an element $f$ of $B$ that has the same leading term as $g$.
Then $g-f$ has a smaller leading term, and by induction on the lexicographic order, we can assume that $g-f$ belongs to $B$. Therefore $g\in B$.

If $m=1$, then the set $D$ consists of elements $x_1^{(q)}$, where $q=p^s$ and $s\geq 0$. It is easy to see that $D$ is a minimal generating set of $Div[x_1]$.
Therefore, assume $m>1$.

If $a_1<p$, then, up to a nonzero constant, $x_1^{(a_1)}\ldots x_m^{(a_m)}$ equals 
\[(x_1\ldots x_m)^{a_m}(x_1\ldots x_{m-1})^{a_{m-1}-a_m} \ldots x_1^{a_1-a_2}\] 
which is the leading term of $f_{1, \ldots, 1}^{a_m} f_{1, \ldots, 1, 0}^{a_{m-1}-a_m} \ldots f_{1, 0, \ldots, 0}^{a_1-a_2}\in B$.
Here $f_{\tiny{\underbrace{1, \ldots, 1}_j, 0, \ldots 0}}$ is the $j$-th elementary symmetric function 
$\sigma_{j}=\sum_{1\leq i_1<\ldots < i_j\leq m}x_{i_1}\ldots x_{i_j}$.

Assume now $a_1\geq p$ and write the $p$-adic expansions of $a_i$ as 
\[a_i=\sum_{j=0}^s b_{ij}p^j\]
for each $i=1, \ldots, m$ and sufficiently large $s$. 

If $a_1=\ldots =a_m$, then, up to a nonzero constant, 
$x_1^{(a_1)}\ldots x_m^{(a_m)}$ is the leading term of 
\[f_{p^s, \ldots, p^s}^{b_{1s}} f_{p^{s-1}, \ldots, p^{s-1}}^{b_{1,s-1}}\ldots f_{1, 1, \ldots, 1}^{b_{10}}\in B.\]
Otherwise, there is an index $0\leq t \leq s$ such that
$b_{1s}=\ldots =b_{ms}$, $b_{1,s-1}=\ldots =b_{m,s-1}$,  \ldots  and $b_{r,t} > b_{r+1, t}=b_{r+2,t}=\ldots =b_{m,t}$ for some $r$. In this case,
the product
\[(x_1^{(p^s)}\ldots x_m^{(p^s)})^{b_{1s}}\ldots (x_1^{(p^{t+1})}\ldots x_m^{(p^{t+1})})^{b_{1,t+1}}
(x_1^{(a'_1)}\ldots x_{m}^{(a'_m)}),\]
where $a'_i=\sum_{j=0}^t b_{ij}p^j$ for $i=1, \ldots, m$ are such that $a'_1\geq \ldots \geq a'_m\geq 0$,
is a multiple of $x_1^{(a_1)}\ldots x_m^{(a_m)}$ by a constant $c_1$.
Since there are no $p$-adic carries when adding the exponents in the above product, the constant $c_1$ is not zero.

We can write 
\[\begin{aligned}c_2 x_1^{(a'_1)}\ldots x_{m}^{(a'_m)}=&(x_1^{(p^t)}\ldots x_m^{(p^t)})^{b_{m,t}}(x_1^{(p^t)}\ldots x_{r}^{(p^t)})^{b_{r,t}-b_{r+1,t}-1}\\
&(x_1^{(p^t)}\ldots x_{r}^{(p^t)}x_{r+1}^{(a'_{r+1}-b_{r+1,t}p^t)}\ldots x_{m}^{(a'_{m}-b_{m,t}p^t)})\\
&(x_1^{(p^t)}\ldots x_{r-1}^{(p^t)})^{b_{r-1,t}-b_{r,t}}\ldots (x_1^{(p^t)})^{b_{1t}-b_{2t}}
\end{aligned}\]
for some nonzero constant $c_2$ because there are no $p$-adic carries when adding the exponents on the right-hand side.

Since the exponents of the middle term satisfy 
\[p^t \geq \ldots \geq p^t\geq a'_{r+1}-b_{r+1,t}p^t \geq \ldots \geq a'_{m}-b_{m,t}p^t,\] 
there is an element $f_{p^t, \ldots, p^t, a'_{r+1}-b_{r+1,t}p^t, \ldots, a'_{m}-b_{m,t}p^t}$ of $D$ 
with the leading term $x_1^{(p^t)}\ldots x_{r}^{(p^t)}x_{r+1}^{(a'_{r+1}-b_{r+1,t}p^t)}\ldots x_{m}^{(a'_{m}-b_{m,t}p^t)}$.

The element 
\[\begin{aligned}&\frac{1}{c_1c_2} 
f_{p^s, \ldots, p^s}^{b_{1s}}\ldots f_{p^{t+1}, \ldots, p^{t+1}}^{b_{1,t+1}}
f_{p^t, \ldots, p^t}^{b_{m,t}}
f_{\tiny{\underbrace{p^t, \ldots, p^t}_r, 0, \ldots, 0}}^{b_{r,t}-b_{r+1,t}-1} \\
&f_{\tiny{\underbrace{p^t, \ldots, p^t}_r, a'_{r+1}-b_{r+1,t}p^t, \ldots, a'_{m}-b_{m,t}p^t}}
f_{\tiny{\underbrace{p^t, \ldots, p^t}_{r-1}, 0, \ldots, 0}}^{b_{r-1,t}-b_{r,t}}\ldots f_{p^t,0, \ldots, 0}^{b_{1t}-b_{2t}}
\end{aligned}\]
belongs to $B$ and has the required leading term $x_1^{(a_1)}\ldots x_m^{(a_m)}$.

It is clear that none of the generators corresponding to $q=p^0=1$ can be omitted from the generating set $D$.
The reason why no element corresponding to $q=p^s>1$ can be omitted from the generating set $D$ is because no product of generating elements from $D$ corresponding to powers $p^t$, where $t<s$ can have the leading element starting with $x_1^{(q)}$ because we always encounter a $p$-adic carry when adding exponents at $x_1$. 
\end{proof}

\section{Supersymmetric elements of $Div[x_1,y_1]$}

From now on, as usual within the content of supergroups, we assume that the characteristic $p$ of the ground field $K$ satisfies $p>2$.
In this section, we write $x=x_1$ and $y=y_1$, for simplicity.

First, we determine a $K$-basis of supersymmetric elements of $Div[x,y]$. 
\begin{pr}\label{Kbasis}
The following elements form a $K$-basis of supersymmetric elements in $Div_k[x,y]$ of degree $k=pl+s$, where $0\leq s<p$:  

\begin{itemize}
\item{$x^{(t)}y^{(k-t)}$, where $t\equiv s+1, \ldots, p-1 \pmod p$};
\item{$(s-j)!x^{(k-pr-j)}y^{(pr+j)} -s(s-1)\ldots (j+1) x^{(k-pr-s)}y^{(pr+s)}$, where $j=0, \ldots, s-1$
and $0\leq r \leq l$};
\item{$\sum_{r=0}^l (-1)^r x^{(pr)}y^{(k-pr)}$ if $s=0$}.
\end{itemize}
\end{pr}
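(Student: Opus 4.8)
\emph{Reduction to a linear system.} Since here $m=n=1$, the separate-symmetry requirement is vacuous, so $\mathcal S_k$ is carved out of $Div_k[x,y]$ by condition~(\ref{star}) alone. The plan is to make (\ref{star}) completely explicit in coordinates. Writing $f=\sum_{t=0}^k c_t\,x^{(t)}y^{(k-t)}$ and a prospective witness $f'=\sum_{v=0}^{k-2} e_v\,x^{(v)}y^{(k-2-v)}$, I would compute both sides using the derivation rules together with the multiplication identities $x\,x^{(v)}=(v+1)x^{(v+1)}$ and $y\,y^{(w)}=(w+1)y^{(w+1)}$. Matching the coefficient of $x^{(u)}y^{(k-1-u)}$ on each side turns (\ref{star}) into the linear system
\[ c_u+c_{u+1}=u\,e_{u-1}-(k-1-u)e_u,\qquad u=0,\dots,k-1, \]
with the convention $e_{-1}=e_{k-1}=0$.

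\emph{Eliminating the witness.} An element $f$ is supersymmetric exactly when this system is solvable for $(e_v)$, i.e. when the vector $(c_u+c_{u+1})_u$ lies in the image of the map $T\colon (e_v)\mapsto \bigl(u\,e_{u-1}-(k-1-u)e_u\bigr)_u$. I would eliminate $(e_v)$ by passing to the compatibility conditions: $(c_u+c_{u+1})_u$ must be annihilated by every solution $\lambda=(\lambda_v)$ of the transposed relations
\[ (v+1)\lambda_{v+1}=(k-1-v)\lambda_v,\qquad v=0,\dots,k-2. \]
The whole problem thus reduces to solving this second recurrence over $K$.

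\emph{The main obstacle: the recurrence in characteristic $p$.} Over a field of characteristic zero the transposed relations would force $\lambda_v=\binom{k-1}{v}\lambda_0$, a one-dimensional solution space. In characteristic $p$ this is only one solution among several, and pinning down the full solution space is the crux of the proof. The point is that the coefficients $v+1$ and $k-1-v$ vanish modulo $p$ precisely when $v\equiv p-1$ and when $v\equiv s-1\pmod p$ (these coincide exactly when $s=0$). At a non-degenerate step consecutive $\lambda$'s are proportional by a nonzero ratio, whereas each degenerate step either forces some $\lambda_v=0$ or releases a fresh free parameter. I would organize this as a segment-by-segment walk along $v=0,\dots,k-1$, using the descriptions of binomial coefficients mod $p$ due to Lucas and Kummer to track the breaks. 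The outcome I expect is an explicit basis $\lambda^{(a)}$ of the solution space, indexed by $a=0,\dots,l$ when $s\ge 1$ and by $a=0,\dots,l-1$ when $s=0$, where $\lambda^{(a)}$ is supported on the $a$-th length-$p$ block with entries $\binom{s-1}{j}$ (respectively $\binom{p-1}{j}$); in particular the solution space has dimension $l+1$, respectively $l$.

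\emph{Reading off the basis.} Substituting each $\lambda^{(a)}$ into the annihilation condition and simplifying by Pascal's rule (a telescoping via $\binom{s-1}{j}+\binom{s-1}{j-1}=\binom{s}{j}$) collapses the $a$-th condition to $\sum_{j=0}^{s}\binom{s}{j}c_{ap+j}=0$ when $s\ge1$, and to $c_{ap}+c_{(a+1)p}=0$ when $s=0$. Because $s<p$, the blocks $\{ap,\dots,ap+s\}$ are pairwise disjoint, so the conditions decouple, and the basis should fall out directly. The coefficients $c_t$ with $t\equiv s+1,\dots,p-1\pmod p$ are unconstrained and give the Type~1 generators. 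Within each block the single equation has an $s$-dimensional solution space, and I would check that the prescribed two-term combinations $(s-j)!\,x^{(k-pr-j)}y^{(pr+j)}-s(s-1)\cdots(j+1)\,x^{(k-pr-s)}y^{(pr+s)}$ solve it and are linearly independent (their leading $x$-exponents $k-pr-j$ being distinct across $j$), giving the Type~2 generators; when $s=0$ the chain $c_{ap}=(-1)^a c_0$ yields the single Type~3 element. A final count---$(p-1-s)l$ elements of Type~1 plus $s(l+1)$ of Type~2, or $(p-1)l$ of Type~1 plus one of Type~3---matches $\dim\mathcal S_k=(k+1)-(l+1)$, respectively $(k+1)-l$, confirming these elements form a basis.
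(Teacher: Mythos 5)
Your proposal is correct, and while it sets up the same coordinates as the paper (your system $c_u+c_{u+1}=u\,e_{u-1}-(k-1-u)e_u$ is exactly the paper's system~\eqref{eq1}), it eliminates the witness $f'$ by a genuinely different mechanism. The paper row-reduces the augmented matrix directly: it splits the equations into blocks of a ``first type,'' whose row reduction produces the consistency conditions $\sum_{j=0}^{s}\binom{s}{j}a_{k-pr-j,pr+j}=0$, and a ``second type,'' which is triangular with nonzero diagonal and hence always solvable, and it decouples the two by observing that certain $b$-variables occur only in second-type blocks; the cases $s\geq 2$, $s=1$, $s=0$ are treated separately. You instead invoke the duality $\operatorname{im}T=(\ker T^{\top})^{\perp}$ (valid over any field) and solve the transposed recurrence $(v+1)\lambda_{v+1}=(k-1-v)\lambda_v$; its degenerate steps at $v\equiv p-1$ and $v\equiv s-1\pmod p$ reproduce exactly the paper's block boundaries, your kernel basis with entries $\binom{s-1}{j}$ (resp.\ $\binom{p-1}{j}=(-1)^j$) is right, and Pascal's rule converts the pairing into the paper's conditions (your block $a$ is the paper's block $r=l-a$, after the substitution $j\mapsto s-j$). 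What your route buys: a uniform treatment in which the case analysis is confined to where the recurrence degenerates, plus an explicit dimension count $(p-1-s)l+s(l+1)=k-l$, resp.\ $(p-1)l+1=k+1-l$, which actually certifies the ``basis'' claim --- the paper's proof only derives the conditions and exhibits solutions, leaving independence and spanning implicit. What the paper's route buys: its block decomposition of the equations themselves (rather than of the dual system) is the form that is reused verbatim, with ``frozen'' indices, in Proposition~\ref{cond}. The two items you defer are immediate and close no real gap: the two-term combinations satisfy the block equation because $\binom{s}{s-j}(s-j)!=s!/j!$, and the conditions are independent because for $s\geq 1$ the blocks $\{ap,\dots,ap+s\}$ are pairwise disjoint, while for $s=0$ each successive chain equation involves a new variable.
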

\begin{proof}
Let $f=\sum_{i+j=k} a_{ij} x^{(i)}y^{(j)}$ be a homogeneous element of $Div_k[x,y]$. 
Then 
\[(\frac{d}{dx}+\frac{d}{dy})(f)=\sum_{i+j=k} a_{ij} x^{(i-1)}y^{(j)}+\sum_{i+j=k} a_{ij} x^{(i)}y^{(j-1)}=
\sum_{i+j=k-1} (a_{i+1,j}+a_{i,j+1})x^{(i)}y^{(j)}.\]

On the other hand, if
$f'=\sum_{u+v=k-2} b_{uv} x^{(u)}y^{(v)}$, then 
\[\begin{aligned}(x-y)f'=&\sum_{u+v=k-2}(u+1)b_{uv}x^{(u+1)}y^{(v)}-\sum_{u+v=k-2} (v+1)b_{uv} x^{(u)}y^{(v+1)}\\
=&\sum_{i+j=k-1} [ib_{i-1,j}-jb_{i,j-1}]x^{(i)}y^{(j)}.
\end{aligned}\]

Comparing both expressions, we infer that $f$ is supersymmetric if and only if the linear system, consisting of equations
\begin{equation}\tag{i}\label{eq1}
a_{i+1,j}+a_{i,j+1}=ib_{i-1,j}-jb_{i,j-1}\mbox{ where } i+j=k-1
\end{equation}
in variables $a_{0,k}, \ldots, a_{k,0}$, is consistent.

Assume that $s\geq 2$. Then the above system splits into blocks of two different types. 
Blocks of the first type correspond to equations labeled by (i) where $i=k-pr-1, \ldots,k-pr-s$ for fixed $0\leq r\leq l$. The first equation of this block is 
\[a_{k-pr,pr}+a_{k-pr-1,pr+1}=(s-1)b_{k-pr-1,pr}-0b_{k-pr,pr-1}\]
and the last equation is 
\[a_{k-pr-s+1,pr+s-1}+a_{k-pr-s,pr+s}=0b_{k-pr-s-1,pr+s-1}-(s-1)b_{k-pr-s,pr+s}.\]

Row-reducing the augmented matrix of this linear system, we obtain that it is consistent if and only if 
\begin{equation}\label{*}
\sum_{j=0}^{s} \binom{s}{j} a_{k-pr-j,pr+j}=0.
\end{equation}

Blocks of the second type correspond to equations (i) where $i=k-pr-s-1, \ldots, k-(r+1)p$ for $0\leq r<l$. Since the coefficient matrix of this system
is triangular and all coefficients on the main diagonal are nonzero, a system of the second type is always consistent. 

An important observation is that the variable $b_{k-pr-s-1,pr}$ appears only in a block of the second type and 
variable $b_{k-pr-1,pr}$ appears just in a block of the second type. This implies that the original system is consistent if and only if 
all systems corresponding to blocks of the first type are consistent.

Since $a_{t,k-t}$, where $t\equiv s+1, \ldots, p-1 \pmod p$, only appear in a block of the second type, we derive that corresponding element 
$x^{(t)}y^{(k-t)}$ is supersymmetric. We obtain additional generating supersymmetric elements by setting 
\[a_{k-pr-j,pr+j}=(s-j)!, a_{k-pr-s,pr-s}=-s(s-1)\ldots (j+1)\] for $j=0, \ldots s-1$
and each $0\leq r\leq l$ in the equation $(\ref{*})$.

If $s=1$, then each block of the first type consists of a single equation $a_{k-pr,pr}+a_{k-pr-1,pr+1}=0$,
showing that $-x^{(k-pr)}y^{(pr)}+x^{(k-pr-1)}y^{(pr+1)}$, where $0\leq r \leq l$, are additional generating supersymmetric elements.

If $s=0$, then there are only blocks of the first type, but there is an overlap between conditions from different blocks.
Namely, we get equations 
\begin{equation}\label{sup}
a_{k-pr,pr}+a_{k-p(r+1),p(r+1)}=0
\end{equation}
for $0\leq r<l$, which imply that 
$\sum_{r=0}^l (-1)^r x^{(pr)}y^{(k-pr)}$
is supersymmetric.
\end{proof}

The lexicographic order on monomials $x^{(i)}y^{(j)}$ is given as $x^{(i_1)}y^{(j_1)} > x^{(i_2)}y^{(j_2)}$ if $i_1>i_2$,
or $i_1=i_2$ and $j_1>j_2$.
Let $f\in Div[x,y]$ be such that $f=\sum_{i,j} f_{i,j}x^{(i)}y^{(j)}$. We call $x^{(i)}y^{(j)}$ the leading term of $f$ if $f_{i,j}\neq 0$, and 
$f_{k,l}\neq 0$  implies $x^{(i)}y^{(j)}\geq x^{(k)}y^{(l)}$.

%We are able to exhibit a minimal list of generators of supersymmetric elements.

Now we describe algebra generators of the algebra $\mathcal{S}$ of supersymmetric elements in $Div[x,y]$.

\begin{pr}\label{gen11}
The algebra $\mathcal{S}$ of supersymmetric elements of $Div[x_1,y_1]$ is generated by elements $\sum_{r=0}^{p^{s-1}} (-1)^r x^{(pr)}y^{(p^s-pr)}$ for $s>0$,
and $xy^{(k-1)}-ky^{(k)}$ for $k>0$. 
\end{pr}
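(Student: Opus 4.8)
The plan is to show that the proposed generating set produces every basis element of $\mathcal{S}$ described in Proposition \ref{Kbasis}, using a leading-term argument analogous to that of Proposition \ref{pr1}. First I would fix notation: write $G_s = \sum_{r=0}^{p^{s-1}} (-1)^r x^{(pr)}y^{(p^s-pr)}$ for $s>0$ and $H_k = xy^{(k-1)} - ky^{(k)}$ for $k>0$. It is routine to verify that each $G_s$ and $H_k$ is supersymmetric (the $G_s$ are exactly the $s=0$ basis elements of Proposition \ref{Kbasis} for degree $p^s$, and the $H_k$ are the $s=1$, $r=0$ basis elements in degree $k$), so the algebra they generate is contained in $\mathcal{S}$. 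The content is the reverse inclusion. I would compute the leading term of each generator with respect to the lexicographic order just defined: the leading term of $G_s$ is $y^{(p^s)}$ (the $r=0$ summand, since all $x$-exponents exceed $0$ only for $r>0$—careful here, the highest power of $x$ actually gives the lex-largest term, so the leading term of $G_s$ is $x^{(p^s)}$ with coefficient $(-1)^{p^{s-1}}$), and the leading term of $H_k$ is $xy^{(k-1)}$.

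The core of the argument is an induction on the lexicographic order showing that every supersymmetric $f$ lies in the subalgebra $B$ generated by the $G_s$ and $H_k$. Given a homogeneous supersymmetric $f$ of degree $k = pl+s$, I would examine its leading term $x^{(i)}y^{(j)}$ and exhibit a product of generators in $B$ with the same leading term; subtracting a scalar multiple strictly decreases the leading term, and the element remains supersymmetric (since $\mathcal{S}$ is a subalgebra and $B \subseteq \mathcal{S}$), so induction finishes the proof. The key computation is controlling leading terms of products: when I multiply $G_{s_1} \cdots G_{s_u} H_{k_1} \cdots H_{k_v}$, the leading term is the divided-power product of the individual leading terms, and this product is nonzero precisely when there are no $p$-adic carries among the exponents being added. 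This is where the divided-powers carry rule (quoted in the introduction) does the real work, exactly as in Proposition \ref{pr1}.

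The main obstacle is realizing an arbitrary admissible leading term $x^{(i)}y^{(j)}$ as a carry-free product of the available leading terms $x^{(p^s)}$ (from the $G_s$) and $xy^{(t)}$ (from the $H_k$, for varying $t$). The leading monomials of the $G_s$ supply pure $x$-divided-powers at the prime-power exponents $p^s$, which by base-$p$ digit decomposition can build any $x^{(i)}$ without carries; but they contribute no $y$ at top lex order, so the $y$-part and the single interior $x$-factor must come from the $H_k$ blocks. I would argue that for the lex-leading term of a supersymmetric element, the $x$-exponent $i$ is either $0$, or a prime power, or otherwise forces a specific factorization using at most one $H_k$ factor to absorb the $x^{(1)}$-discrepancy together with the required $y$-power, matching against the explicit basis in Proposition \ref{Kbasis}. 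The delicate point is checking that this factorization stays carry-free in both the $x$ and $y$ exponents simultaneously; I expect to handle the residue classes $s+1,\dots,p-1 \pmod p$ (the pure monomials $x^{(t)}y^{(k-t)}$), the $s=0$ case (built directly from $G_s$ times lower-degree $G$'s and $H$'s), and the remaining $s \geq 1$ relations separately, verifying in each that the digit arithmetic admits no carry.
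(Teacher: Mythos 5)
Your overall skeleton --- read off the admissible leading terms from Proposition \ref{Kbasis}, realize each one as the leading term of a carry-free product of generators, then subtract and induct on the lexicographic order --- is the same as the paper's, and your remarks about products of leading terms and the $p$-adic carry criterion are sound. However, the factorization step as you state it would fail. You propose to build a leading term $x^{(t)}y^{(k-t)}$ from the $G_s$ (leading terms $x^{(p^s)}$) together with \emph{at most one} $H$-factor absorbing the ``$x^{(1)}$-discrepancy'' and the $y$-power. Every $G_s$ contributes a multiple of $p$ to the $x$-exponent and a single $H$-factor contributes exactly $1$, so any such product has $x$-exponent $\equiv 0$ or $1 \pmod p$. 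But by Proposition \ref{Kbasis} the admissible leading terms in degree $k$ are $x^{(t)}y^{(k-t)}$ for \emph{every} $t\not\equiv 0 \pmod p$, plus $x^{(k)}$ when $p\mid k$; all residues $t\equiv 2,\ldots,p-1 \pmod p$ are therefore out of reach of your factorization. Your trichotomy ``$i$ is $0$, a prime power, or otherwise'' is also off: $i=0$ never occurs as a leading term in positive degree, and the pure-$x$ case $x^{(k)}$ requires only $p\mid k$ with $k$ an arbitrary multiple of $p$ (realized by $\prod_s G_s^{a_s}$ along the $p$-adic digits of $k$), not a prime power.

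The missing idea is to allow several $H$-factors, almost all of them equal to $H_1=x-y$. Writing $t=t_0+\sum_{i\geq 1}a_ip^i$ with $1\leq t_0\leq p-1$, the product $H_{k-t+1}\cdot H_1^{\,t_0-1}\cdot\prod_{i\geq 1}G_i^{a_i}$ has leading term a nonzero multiple of $x^{(t)}y^{(k-t)}$: the $x$-exponents $1,1,\ldots,1,a_1p,a_2p^2,\ldots$ add without carries because $t_0\leq p-1$ and the $a_i$ are digits, and the $y$-exponents are $k-t,0,\ldots,0$. With this correction your argument closes, and in fact becomes a clean one-shot version of what the paper does with different bookkeeping: the paper inducts on the degree, multiplying an already-constructed element with leading term $x^{(t)}y^{(k-t)}$, $t\not\equiv 0,p-1 \pmod p$, by $x-y$ to raise the $x$-exponent one unit at a time, and reserving either the generator $H_{k+1}$ or the product $f_{(1,k-t)}f_{(t,0)}$ for the step where the residue crosses $0$.
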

\begin{proof}
Denote by $A$ the algebra generated by elements from the text of the lemma, and by $A_k$ its homogeneous component of degree $k$.
It is clear that $A$ is a subalgebra of $\mathcal{S}$.

We consider $f\in \mathcal{S}_k$ and proceed by the induction on the degree $k$.
% and on the leading term of $f$.
It is clear that $\mathcal{S}_1$ is generated by the element $x-y\in A$. Assume now that $\mathcal{S}_u=A_u$ for $u\leq k$. 

By Proposition \ref{Kbasis}, all leading terms of $f\in \mathcal{S}_k$ are of the form $x^{(t)}y^{(k-t)}$, where $0\leq t\leq k$ is not divisible by $p$, together with 
one more term $x^{(k)}$ if $k$ is divisible by $p$. 
Denote by $f_{(t,k-t)}$ an element of $\mathcal{S}_k$ that has $x^{(t)}y^{(k-t)}$ as its leading term. 

If $t\not \equiv 0,p-1 \pmod p$, 
then $(x-y)f_{(t,k-t)}\in \mathcal{S}_1\mathcal{S}_k$ has a leading term $x^{(t+1)}y^{(k-t)}$. If $t=0$, then $A$ contains a generator with the leading term $xy^{(k)}$. 

If $t\geq p$ and $t$ is divisible by $p$,
then there are elements $f_{(1,k-t)}\in \mathcal{S}_{k-t+1}$ and $f_{(t,0)}\in \mathcal{S}_t$ with leading terms $xy^{(k-t)}$ and $x^{(t)}$, respectively. 
Their product $f_{(1,k-t)}f_{(t,0)}\in \mathcal{S}_{k-t+1}\mathcal{S}_{t}$ has the leading term $x^{(t+1)}y^{(k-t)}$. 

Finally, $A$ contains a generator $f_{(p^s,0)}$ with the leading term $x^{(p^s)}$. It $t+1\equiv 0\pmod p$ and $t+1=a_v p^v+\ldots a_1 p$ is the $p$-adic expansion of $t+1$, then 
$f_{(p^v,0)}^{a_v}\ldots f_{(p,0)}^{a_1}\in A$ has the leading term $x^{(t+1)}$. 

It follows from the inductive assumption and from Lemma \ref{Kbasis} that for every $f\in \mathcal{S}_{k+1}$ 
there is an element $a\in A_{k+1}$ that has exactly the same leading term as $f$. Subtracting a suitable scalar multiple of $a$ from $f$ we obtain another element 
$g\in \mathcal{S}_{k+1}$ with the smaller leading term. Induction on the lexicographical order of leading terms of elements in $\mathcal{S}_{k+1}$ concludes the proof.
\end{proof}

\begin{rem}\label{rem0}
It would be interesting to know if the set of generators in Proposition \ref{Kbasis} is minimal; meaning that if any of its element is omitted, 
then the resulting set does not generate $\mathcal{S}$.
Analogously to the arguments in the proof of Proposition \ref{pr1}, we cannot eliminate any generator with the leading term $x^{(p^s)}$ and still generate $\mathcal{S}$.
Also, if $k\leq p$, then no element of the form $xy^{(k-1)}-ky^{(k)}$ can be generated from elements of the form $xy^{(l-1)}-ly^{(l)}$, where $l<k$. Therefore no element 
$xy^{(k-1)}-ky^{(k)}$ for $k\leq p$ can be omitted either. This suggests that the set of generators in Proposition \ref{Kbasis} might be minimal, but further investigation is required.
\end{rem}

\begin{rem}\label{rem1}
Note that in the case of classical supersymmetric polynomials over the field of characteristic zero, the algebra generators are given by $-C_k=xy^{k-1}-y^k$ for $k>0$. 
Since $(k-1)![xy^{(k-1)}-ky^{(k)}]=xy^{k-1}-y^k$, we have an exact correspondence of algebra generators for degrees $k\leq p$. 
If $k>p$, then we replace $-C_k$ with $xy^{(k-1)}-ky^{(k)}$.

To facilitate the transition through the degrees $k$ that are multiples of $p$, we need generators with leading terms $x^{(p^s)}$ for $s>1$. It turns out that is all that is required for  $Div[x,y]$.
\end{rem}

\section{Characterization of supersymmetric elements in $Div[x_1,\ldots, x_m,y_1, \ldots, y_n]$}

For fields $K$ of characteristic zero, generators of supersymmetric polynomials (as presented in \cite{kanttrish}) are given as 
\[C_t=\sum_{i=0}^{min\{t,m\}} (-1)^{t-i} \sigma_i(x)p_{t-i}(y),\]  
where $\sigma_i(x)$ is the $i$-th elementary symmetric polynomial in variables $x_1, \ldots, x_m$ and $p_j(y)$ is the complete $j$-th symmetric polynomial in variables 
$y_1, \ldots, y_n$. Following Remark \ref{rem1}, we first show how to transform the generators $C_t$ to supersymmetric elements of $Div[x, y]$ over a field of characteristic $p$.

We have  
\[p_k(y)=\sum_{j_1+j_2+\ldots+j_n=k} y_1^{j_1}\ldots y_n^{j_n}\] 
and \[y^{j_1}\ldots y_n^{j_n}=(j_1!\ldots j_n!)y^{(j_1)}\ldots y_n^{(j_n)}.\]

Denote by $d(j_1, \ldots, j_n)$ the highest power of $p$ dividing the product $(j_1!\ldots j_n!)$ and by $d_k=min\{d(j_1, \ldots, j_n)|j_1+\ldots+ j_n=k\}$.
It is obvious that the sequence $\{d_k\}_{k=0}^{\infty}$ is nondecreasing. Denote by $\ell_{k}$ the highest term (in the lexicographic order) 
$y_1^{(j_1)}\ldots y_n^{(j_n)}\in Div[y_1, \ldots, y_n]$
such that $\binom{k}{j_1, \ldots, j_n}$ is not divisible by ${p^{d_k+1}}$.

If $t\leq m$, then $C_t$ represents a nonzero supersymmetric element $E_t$ of $Div[x,y]$ and its highest term is $x_1\ldots x_t$.
If $t>m$, then the expression $\frac{C_t}{p^{d_{t-m}}}$ represents a nonzero supersymmetric element $E_t$ of 
$Div[x, y]$ and its highest term with respect to the lexicographic order is $x_1\ldots x_m \ell_{t-m}$.

Next, we characterize the supersymmetric elements in $Div[x,y]$.

To simplify the notation, we write $x^{(i_1, \ldots, i_m)}y^{(j_1, \ldots, j_n)}$ in place of the monomial $x_1^{(i_1)}\ldots x_m^{(i_m)}y_1^{(j_1)}\ldots y_n^{(j_n)}$.

\begin{pr}\label{cond}
A homogeneous element 
\[f=\sum_{i_1+\ldots+i_m+j_1+\ldots+j_n=k} a_{i_1\ldots i_mj_1\ldots j_n} x^{(i_1, \ldots, i_m)}y^{(j_1, \ldots, j_n)}\] of $Div_k[x_1,\ldots, x_m,y_1, \ldots, y_n]$
is supersymmetric if and only if for each $t\geq 0$ such that $i_1+j_1=t=pl+s$, where $0\leq s<p$ and arbitrary 
$i_2, \ldots, i_m,j_2, \ldots, j_m$ such that $i_2+\ldots+i_m+j_2+\ldots+j_n=k-t$ the set of equations
\begin{equation}\label{**}
\sum_{j=0}^s \binom{s}{j} a_{t-pr-j,i_2, \ldots, i_m,pr+j, j_2, \ldots, j_n} =0 \text{ for } s>0
\end{equation}
\begin{equation}\label{sup*}
a_{t-pr,i_2,\ldots, i_m,pr, j_2, \ldots, j_n}+a_{t-p(r+1), i_2, \ldots, i_m,p(r+1), j_2, \ldots, j_n}=0
\end{equation}
for $0\leq r<l$, 
and
\begin{equation}\label{sym}
a_{i_1, \ldots, i_nj_1\ldots j_n}=a_{\sigma(i_1), \ldots, \sigma(i_m), \tau(j_1), \ldots, \tau(j_n)}
\end{equation}
for any permutations $\sigma\in \Sigma_m$ and $\tau\in \Sigma_n$, 
is consistent.
\end{pr}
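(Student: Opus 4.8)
The plan is to reduce the supersymmetry condition $(\star)$ for the multivariable case to the single-variable analysis already completed in Proposition~\ref{Kbasis}, by exploiting the fact that the operator $\frac{d}{dx_1}+\frac{d}{dy_1}$ only touches the variables $x_1$ and $y_1$. The symmetry conditions \eqref{sym} are immediate from the hypothesis that $f$ is symmetric in the $x$-variables and in the $y$-variables separately, so the real content is showing that the defining equation $(\frac{d}{dx_1}+\frac{d}{dy_1})f=(x_1-y_1)f'$ is solvable for some $f'\in Div_{k-2}[x,y]$ precisely when the linear relations \eqref{**} and \eqref{sup*} hold.

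First I would compute both sides of $(\star)$ explicitly in coordinates, exactly as in the proof of Proposition~\ref{Kbasis}, treating the indices $i_2,\ldots,i_m,j_2,\ldots,j_n$ as \emph{frozen parameters}. Writing $t=i_1+j_1$, applying $\frac{d}{dx_1}+\frac{d}{dy_1}$ to $f$ lowers the $(x_1,y_1)$-degree by one and, grouping by the remaining multi-indices, produces for each fixed choice of frozen indices a sum of the form $\sum (a_{i_1+1,\ldots,j_1,\ldots}+a_{i_1,\ldots,j_1+1,\ldots})x_1^{(i_1)}y_1^{(j_1)}(\text{frozen part})$. Similarly, writing $f'=\sum b_{u_1\ldots v_n}x^{(u)}y^{(v)}$ and expanding $(x_1-y_1)f'$ gives coefficients $u_1 b_{u_1-1,\ldots}-v_1 b_{\ldots,v_1-1}$ attached to the same monomials. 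The crucial structural observation is that because neither differential operator nor multiplication by $(x_1-y_1)$ alters the frozen indices, the entire linear system \eqref{eq1} decouples into independent subsystems, one for each fixed value of $(i_2,\ldots,i_m,j_2,\ldots,j_n)$ with $i_2+\cdots+j_n=k-t$.

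Within each such subsystem the variables are precisely the $(x_1,y_1)$-coefficients $a_{i_1,\ldots,j_1,\ldots}$ with $i_1+j_1=t$ ranging over the two-variable simplex, and the system is \emph{formally identical} to the one analyzed in Proposition~\ref{Kbasis} with $x=x_1,\ y=y_1$ and with total $(x_1,y_1)$-degree $t=pl+s$. I would therefore invoke directly the block decomposition established there: the system splits into blocks of the first type (indices $i_1$ running through a residue window of length $s$), whose consistency is governed exactly by the row-reduced relation \eqref{*}, which here becomes \eqref{**}; blocks of the second type, which are always consistent because their coefficient matrix is triangular with nonzero diagonal; and, in the case $s=0$, the overlapping relations \eqref{sup} which here become \eqref{sup*}. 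The observation that the free variables $b$ appearing in second-type blocks do not reappear in first-type blocks (the ``important observation'' in the proof of Proposition~\ref{Kbasis}) guarantees that consistency of the whole subsystem is equivalent to consistency of its first-type blocks alone.

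Assembling these pieces, $f$ is supersymmetric if and only if \emph{every} decoupled subsystem is consistent, which by the single-variable result happens if and only if the relations \eqref{**} (for $s>0$) and \eqref{sup*} (for $s=0$) hold for all frozen parameter choices; together with \eqref{sym} from symmetry, this is exactly the asserted characterization. \textbf{The main obstacle} I anticipate is purely bookkeeping rather than conceptual: one must verify carefully that the differential operator and the multiplication genuinely leave the frozen indices untouched so that the decoupling is exact, and that the indexing of the first- and second-type blocks transcribes correctly once the extra variables are carried along. The underlying consistency analysis itself is entirely inherited from Proposition~\ref{Kbasis}, so no new row-reduction argument is needed; the work lies in making the reduction to the two-variable case precise and in matching indices between \eqref{*}, \eqref{sup} and their multivariable counterparts \eqref{**}, \eqref{sup*}.
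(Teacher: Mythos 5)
Your proposal is correct and follows essentially the same route as the paper: compute both sides of $(\star)$ in coordinates, observe that the system decouples over the frozen indices $i_2,\ldots,i_m,j_2,\ldots,j_n$ into subsystems identical to the one in Proposition~\ref{Kbasis}, invoke that block analysis to get \eqref{**} and \eqref{sup*}, and append \eqref{sym} from the separate symmetry hypothesis. Your write-up is in fact slightly more explicit than the paper's, which compresses the decoupling step into the phrase ``repeating the arguments from the proof of Lemma~\ref{Kbasis}.''
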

\begin{proof}
We compute
\[\begin{aligned}(\frac{d}{dx}+\frac{d}{dy})(f)=&\sum_{i_1+\ldots+i_m+j_1+\ldots+j_n=k} a_{i_1\ldots i_mj_1\ldots j_n} 
x^{(i_1-1,i_2, \ldots, i_m)}y^{(j_1, \ldots, j_n)}\\
&+\sum_{i_1+\ldots+i_m+j_1+\ldots+j_n=k} a_{i_1\ldots i_mj_1\ldots j_n} 
x^{(i_1, \ldots, i_m)}y^{(j_1-1,j_2, \ldots, j_n)}\end{aligned}\]
\[=\sum_{i_1+\ldots+i_m+j_1+\ldots+j_n=k-1} (a_{i_1+1,i_2\ldots i_mi_1\ldots j_n} +a_{i_1,\ldots i_m,j_1+1,j_2\ldots j_n})
x^{(i_1, \ldots, i_m)}y^{(j_1, \ldots, j_n)}.
\]

On the other hand, if
\[f'=\sum_{u_1+\ldots+u_m+v_1+\ldots+v_n=k-2} a_{u_1\ldots u_mv_1\ldots v_n} 
x^{(u_1, \ldots, u_m)}y^{(v_1, \ldots, v_n)},\]
then 
\[\begin{aligned}
(x_1-y_1)f'=&\sum_{u_1+\ldots+u_m+v_1+\ldots+v_n=k-2} (u_1+1)b_{u_1\ldots u_mv_1\ldots v_n} 
x^{(u_1+1,u_2 \ldots, u_m)}y^{(v_1, \ldots, v_n)}\\
&+\sum_{u_1+\ldots+u_m+v_1+\ldots+v_n=k-2} (v_1+1)b_{u_1\ldots u_mv_1\ldots v_n} 
x^{(u_1, \ldots, u_m)}y^{(v_1+1,v_2 \ldots, v_n)}\end{aligned}\]
\[=\sum_{i_1+\ldots+i_m+j_1+\ldots+j_n=k-1} (i_1b_{i_1-1,i_2\ldots i_mi_1\ldots j_n} -j_1b_{i_1,\ldots i_m,j_1-1,j_2\ldots j_n})
x^{(u_1, \ldots, u_m)}y^{(v_1, \ldots, v_n)}.
\]

Therefore $f$ satisfies $(\frac{d}{dx}+\frac{d}{dy})(f)=(x_1-y_1)f'$ if and only if the linear system, consisting of equations
\begin{equation}\tag{$i_1j_1$}\label{eq2}
a_{i_1+1,i_2\ldots i_mi_1\ldots j_n} +a_{i_1,\ldots i_m,j_1+1,j_2\ldots j_n}=i_1b_{i_1-1,i_2\ldots i_mi_1\ldots j_n} -j_1b_{i_1,\ldots i_m,j_1-1,j_2\ldots j_n}
\end{equation}
where $i_1+\ldots+i_m+j_1+\ldots+j_n=k-1$ is consistent.

The above equation (\ref{eq2}) differs from the equation (\ref{eq1}) only by inserting "frozen" indices $i_2, \ldots, i_m, j_2, \ldots, j_m$.

Repeating the arguments from the proof of Lemma \ref{Kbasis}, we conclude that the condition $(\frac{d}{dx}+\frac{d}{dy})(f)=(x_1-y_1)f'$ is equivalent to 
the consistency of the system given by the set of equations (\ref{**}) and (\ref{sup*}) involving fixed $t\geq 0$ such that $i_1+j_1=t=pl+s$, where $0\leq s<p$, $0\leq r<l$, and arbitrary "frozen" indices 
$i_2, \ldots, i_m,j_2, \ldots, j_m$ satisfying $i_2+\ldots+i_m+j_2+\ldots+j_n=k-t$.

The claim now follows if we recall that a supersymmetric element is symmetric with respect to indices $x_1, \ldots, x_m$ and $y_1, \ldots, y_n$ separately.
\end{proof}

We call any equation of type (\ref{**}), (\ref{sup*}) or (\ref{sym}) a {\it defining equation}, and a linear system consisting of all equations 
(\ref{**}), (\ref{sup*}) and (\ref{sym}) the {\it defining linear system}.

We would like to extend further the idea presented in the proof of Proposition \ref{gen11} and describe supersymmetric elements in $Div[x, y]$
using the leading terms. 

Let us call a monomial $x^{(i_1,\ldots, i_m)}y^{(j_1,\ldots, j_n)}$ and the corresponding variable \linebreak $a_{i_1, \ldots, i_m, y_1, \ldots, y_n}$ {\it symmetrized} if $i_1\geq i_2\geq \ldots \geq i_m$ and $j_1\geq j_1\geq \ldots \geq j_n$. 
Symmetrized variables are ordered with respect to the lexicographic order. 
Using the action of the symmetric group $\Sigma_m$ on the variables $x_1, \ldots, x_m$ and the action of the symmetric group $\Sigma_n$ on the variables $y_1, \ldots, y_n$, every monomial $x^{(i_1,\ldots i_m)}y^{(j_1,\ldots, j_n)}$ is conjugated to a unique symmetrized monomial  $x^{(u_1,\ldots, u_m)}y^{(v_1, \ldots, v_n)}$, which we call its {\it symmetrization}. In this case we also call $a_{u_1, \ldots, u_m, v_1, \ldots, v_n}$ the symmetrization of $a_{i_1, \ldots, i_m, y_1, \ldots, y_n}$.

If the symmetrization of $x^{(i_1,\ldots, i_m)}y^{(j_1, \ldots, j_n)}$ is a leading term of a supersymmetric element in $Div[x,y]$, 
then we call $x^{(i_1,\ldots, i_m)}y^{(j_1, \ldots, j_n)}$ and the corresponding variable $a_{i_1, \ldots, i_m,j_1,\ldots, j_n}$ 
{\it marked in $Div[x,y]$}. If this is not the case, then we call them {\it unmarked in $Div[x,y]$}.

The next lemma shows a connection between unmarked monomials from different $Div[x,y]$. 

\begin{lm}\label{connect}
Assume that a monomial $\tilde{x}^{(i'_1, \ldots, i'_{m'})}\tilde{y}^{(j'_1,\ldots, j'_{n'})}$ in $Div[\tilde{x}_1, \ldots, \tilde{x}_{m'},\tilde{y}_1, \ldots, \tilde{y}_{n'}]$ is obtained from a monomial
$x^{(i_1, \ldots, i_{m})}y^{(j_1,\ldots, j_{n})}$ in $Div[x_1, \ldots, x_{m},y_1, \ldots, y_{n}]$ by deleting some of the variables $x_i$ and $y_j$.
If the monomial $\tilde{x}^{(i'_1, \ldots, i'_{m'})}\tilde{y}^{(j'_1,\ldots, j'_{n'})}$ is unmarked in $Div[\tilde{x}_1, \ldots, \tilde{x}_{m'},\tilde{y}_1, \ldots, \tilde{y}_{n'}]$, then 
the monomial $x^{(i_1, \ldots, i_{m})}y^{(j_1,\ldots, j_{n})}$ is unmarked in $Div[x_1, \ldots, x_{m},y_1, \ldots, y_{n}]$. 
%If $x^{(i_1, \ldots, i_m)}y^{(j_1,\ldots, j_n)}$ is unmarked in $Div[x_1, \ldots, x_m,y_1, \ldots, y_n]$ and 
%$x^{(i'_1, \ldots, i'_{m'})}y^{(j'_1,\ldots, j'_{n'})}$ is obtained from $x^{(i_1, \ldots, i_m)}y^{(j_1,\ldots, j_n)}$ by inserting exponents, then 
%$x^{(i'_1, \ldots, i'_{m'})}y^{(j'_1,\ldots, j'_{n'})}$ is unmarked in $Div[x_1, \ldots, x_{m'},y_1, \ldots, y_{n'}]$.
\end{lm}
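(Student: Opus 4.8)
The plan is to prove the contrapositive: if the monomial $x^{(i_1, \ldots, i_m)}y^{(j_1, \ldots, j_n)}$ is marked in $Div[x_1, \ldots, x_m, y_1, \ldots, y_n]$, then the reduced monomial $\tilde x^{(i'_1, \ldots, i'_{m'})}\tilde y^{(j'_1, \ldots, j'_{n'})}$ is marked in the smaller algebra. Since markedness depends only on the symmetrization, I may assume both monomials are symmetrized, and since deleting several variables can be done one at a time, it suffices to treat the deletion of a single variable. By the symmetry of the roles of $x$ and $y$, I reduce to deleting a single $x$-variable, and I keep $x_1$ and $y_1$, so that supersymmetry in the smaller algebra is governed by the same pair $(x_1,y_1)$. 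The degenerate cases in which all $x$-variables or all $y$-variables are removed are vacuous: in a purely even divided powers algebra every symmetrized monomial is the leading term of a symmetric element by Section 1, so no monomial is unmarked there.

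So assume $M = x^{(i_1, \ldots, i_m)}y^{(j_1, \ldots, j_n)}$ with $i_1 \geq \cdots \geq i_m$ and $j_1 \geq \cdots \geq j_n$ is marked, and fix a supersymmetric element $f$ with leading term $M$. Deleting one $x$-variable removes one part, say $v$, from the multiset $\{i_1, \ldots, i_m\}$; since markedness depends only on this multiset and $f$ is symmetric in $x_1, \ldots, x_m$, I may extract this contribution at the last variable $x_m$. Write $\lambda = (i_1, \ldots, i_m)$, let $\mu = (\mu_1, \ldots, \mu_{m-1})$ be the sorted partition obtained from $\lambda$ by deleting one part equal to $v$, so that the symmetrization of the reduced monomial is $\tilde M = x^{(\mu_1, \ldots, \mu_{m-1})}y^{(j_1, \ldots, j_n)}$. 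Using $Div[x_1, \ldots, x_m, y] = \bigoplus_{c\ge 0} x_m^{(c)}\, Div[x_1, \ldots, x_{m-1}, y]$, I define $\psi$ to be the $K$-linear ``coefficient of $x_m^{(v)}$'' map into $Div[x_1, \ldots, x_{m-1}, y]$ and set $g = \psi(f)$. First I check $g$ is supersymmetric: symmetry of $g$ in $x_1, \ldots, x_{m-1}$ and in $y_1, \ldots, y_n$ follows from the symmetry of $f$ under the subgroup of $\Sigma_m$ fixing $x_m$ (which fixes $x_m^{(c)}$), and for the defining relation I use that $\frac{d}{dx_1}, \frac{d}{dy_1}$ and multiplication by $(x_1 - y_1)$ do not involve $x_m$ and hence commute with $\psi$; applying $\psi$ to $(\frac{d}{dx_1}+\frac{d}{dy_1})f = (x_1 - y_1)f'$ gives $(\frac{d}{dx_1}+\frac{d}{dy_1})g = (x_1 - y_1)\psi(f')$, which is exactly condition (\ref{star}).

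The heart of the argument is to identify the leading term of $g$ with $\tilde M$. The monomial $\tilde M$ occurs in $g$ with nonzero coefficient, since the permutation image of $M$ placing the part $v$ at position $x_m$ is the basis monomial $x^{(\mu_1, \ldots, \mu_{m-1})}x_m^{(v)}y^{(j_1, \ldots, j_n)}$, which lies in the leading symmetric orbit of $f$ and thus shares the nonzero coefficient of $M$; extracting $x_m^{(v)}$ turns it into $\tilde M$. It remains to show every monomial of $g$ is $\le \tilde M$ lexicographically. Such a monomial $x^{(r_1, \ldots, r_{m-1})}y^{(c_1, \ldots, c_n)}$ arises from $\theta = x^{(r_1, \ldots, r_{m-1})}x_m^{(v)}y^{(c_1, \ldots, c_n)}$ of $f$, so $\theta \le M$. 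If the $x$-content of $\theta$ is exactly $\lambda$, then $(r_1, \ldots, r_{m-1})$ is a rearrangement of $\mu$, whence $(r_1, \ldots, r_{m-1}) \le \mu$; when equality holds, symmetry of $f$ forces the sorted $y$-content of $\theta$ to be at most $(j_1, \ldots, j_n)$, so $(c_1, \ldots, c_n) \le (j_1, \ldots, j_n)$ and the image is $\le \tilde M$. If instead the $x$-content is a partition $\nu$ strictly smaller than $\lambda$, I invoke the elementary fact that for partitions $\nu \le \lambda$ of equal size sharing a common part $v$ one has $\mathrm{sort}(\nu\setminus\{v\}) \le \mathrm{sort}(\lambda\setminus\{v\}) = \mu$; moreover equality here would force $\nu = \lambda$, so in fact $\mathrm{sort}(\nu\setminus\{v\}) < \mu$, giving $(r_1, \ldots, r_{m-1}) < \mu$ and image $< \tilde M$. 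Hence $\tilde M$ is the leading term of the supersymmetric element $g$, so $\tilde M$ is marked, completing the contrapositive.

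I expect the main obstacle to be precisely this last paragraph, namely controlling the leading term of the extracted element. The subtlety is that extracting a fixed divided power of $x_m$ could a priori promote a monomial coming from a smaller symmetric orbit above the expected term, and ruling this out rests on the monotonicity of lexicographic order under deletion of a common part, together with the observation that the $y$-exponents are constrained only when the $x$-orbit is maximal. I would isolate the order-theoretic statement about partitions as a short separate lemma and prove it by locating the first index at which $\nu$ and $\lambda$ differ.
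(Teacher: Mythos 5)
Your proof is correct, but it runs in the opposite logical direction from the paper's and consequently needs more combinatorics. The paper argues directly: it expands an arbitrary supersymmetric $f$ of the big algebra along all monomials in the deleted variables, notes via Proposition \ref{cond} that every coefficient block is supersymmetric in the smaller algebra (the defining equations never involve the frozen exponents), and then simply transports the property that any nonzero coefficient at the reduced monomial forces a higher nonzero coefficient from the block back into $f$ by re-inserting the frozen exponents; no control of leading terms is ever needed, only the trivial fact that raising the non-frozen part raises the whole monomial. You instead prove the contrapositive: you extract the single block $g=\psi(f)$ of a witness $f$ for markedness and must identify the leading term of $g$ exactly, which is what forces your auxiliary lemma that deleting a common part $v$ from two descending tuples preserves (strict) lexicographic order. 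That lemma is true and your plan for it works (first index of disagreement for the weak inequality, plus the observation that equality after deletion forces equality of the multisets, hence of the tuples); note, however, that the two tuples being compared have equal length but generally different sums, since monomials of $f$ need not all have the same $x$-degree, so the phrase ``of equal size'' should be dropped. What your route buys is self-containedness: you verify supersymmetry of $g$ directly from the commutation of $\psi$ with $\frac{d}{dx_1}$, $\frac{d}{dy_1}$ and multiplication by $x_1-y_1$, rather than through Proposition \ref{cond}. One caveat: your dismissal of the degenerate cases (deleting all $x$'s or all $y$'s) silently adopts the convention that supersymmetry in a purely even algebra means plain symmetry; under a literal reading of condition (\ref{star}) the monomial $x_1$ would be unmarked in $Div[x_1]$ while $x_1y_1$ is marked in $Div[x_1,y_1]$, so the statement must in any case be understood, as in all of the paper's applications, with at least one variable of each kind surviving.
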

\begin{proof}
Let
\[f=\sum_{(i_1, \ldots, i_{m},j_1, \ldots, j_{n})} a_{i_1, \ldots, i_{m},j_1, \ldots, j_{n}} x^{(i_1, \ldots, i_{m})}y^{(j_1, \ldots, j_{n})}\]
be a supersymmetric element in $Div[x_1, \ldots, x_{m},y_1, \ldots, y_{n}]$.
Fix a set of indices $\{k_1, \ldots, k_s\}$ from $\{1, \ldots, m\}$ and a set of indices $\{l_1, \ldots, l_t\}$ from $\{1, \ldots, n\}$.

Write 
\[f= \sum_{(i_{k_1}, \ldots i_{k_s}, j_{l_1}, \ldots j_{l_t})}  x_{k_1}^{(i_{k_1})}\ldots x_{k_s}^{(i_{k_s})} y_{l_1}^{(j_{l_1})} \ldots y_{l_s}^{(j_{l_s})}
f_{(i_{k_1}, \ldots i_{k_s}, j_{l_1}, \ldots j_{l_t})},\]
where 
\[\begin{aligned}f_{(i_{k_1}, \ldots i_{k_s}, j_{l_1}, \ldots j_{l_t})}=&
\sum_{(i_1, \ldots, \widehat{i_{k_1}}, \ldots, \widehat{i_{k_s}}, \ldots, i_{m},j_1, \ldots, \widehat{j_{l_1}}, \ldots, \widehat{j_{l_t}}, \ldots, j_{n})} 
a_{i_1, \ldots, i_{m},j_1, \ldots, j_{n}} \times \\
&x_1^{(i_1)} \ldots \widehat{x_{k_1}^{(i_{k_l})}} \ldots \widehat{x_{k_s}^{(i_{k_s})}} \ldots x_m^{(i_{m})}
y_1^{(j_1)} \ldots \widehat{y_{l_1}^{(j_{l_1})}} \ldots \widehat{y_{l_t}^{(j_{l_t})}} \ldots y_{n}^{(j_n)}.
\end{aligned}\]

By Proposition \ref{cond}, each $f_{(i_{k_1}, \ldots i_{k_s}, j_{l_1}, \ldots j_{l_t})}$ is a supersymmetric element in 
\[Div[x_1, \ldots, \widehat{x_{k_1}}, \ldots, \widehat{x_{k_s}}, \ldots, x_m,y_1, \ldots, \widehat{y_{l_1}}, \ldots, \widehat{y_{l_t}}, \ldots, y_n]=
Div[\tilde{x},\tilde{y}]\]
obtained by ``freezing'' variables $x_{k_i}$ for $i=1, \ldots, s$, $y_{l_j}$ for $j=1, \ldots, t$ and relabeling the remaining variables as $\tilde{x}_{1},\ldots, \tilde{x}_{m-s}$ 
and $\tilde{y}_{1}$, \ldots, $\tilde{y}_{n-t}$.

If the monomial 
\[x_1^{(i_1)} \ldots \widehat{x_{k_1}^{(i_{k_l})}} \ldots \widehat{x_{k_s}^{(i_{k_s})}} \ldots x_m^{(i_{m})}
y_1^{(j_1)} \ldots \widehat{y_{l_1}^{(j_{l_1})}} \ldots \widehat{y_{l_t}^{(j_{l_t})}} \ldots y_{n}^{j_n}=\tilde{x}^{(i'_1, \ldots, i'_{m-s})}\tilde{y}^{(j'_1, \ldots, j'_{n-t})}\]
is unmarked, then for every supersymmetric polynomial $f'$ in $Div[\tilde{x},\tilde{y}]$ such that 
$\tilde{a}_{i'_1, \ldots, i'_{m-s},j'_1, \ldots, j'_{n-t}}\neq 0$ there is a higher $\tilde{a}_{u'_1, \ldots, u'_{m-s},v'_1, \ldots, v'_{n-t}}\neq 0$.
If we apply this to $f_{(i_{k_1}, \ldots, i_{k_s}, j_{l_1}, \ldots, j_{l_t})}$, we obtain that 
$a_{i_1, \ldots, i_{k_1}, \ldots, i_{k_s}, \ldots, i_m,j_1, \ldots, j_{l_1}, \ldots, j_{l_t}, \ldots, j_n}\neq 0$ implies that for some higher coefficient 
$a_{u'_1, \ldots, i_{k_1}, \ldots, i_{k_s}, \ldots, u'_{m-s},v'_1, \ldots, j_{l_1}, \ldots, j_{l_t}, \ldots, v'_{n-t}}\neq 0$.
Therefore $x^{(i_1, \ldots, i_{m})}y^{(j_1,\ldots, j_{n})}$ is unmarked in $Div[x_1, \ldots, x_{m},y_1, \ldots, y_{n}]$. 
\end{proof}

The next lemma describes generators of the algebra $\mathcal{S}$ of all supersymmetric elements in $Div[x,y]$.

\begin{pr}\label{generators}
For each marked symmetrized monomial $x^{(i_1,\ldots, i_m)}y^{(j_1,\ldots, j_n)}$ of $Div[x, y]$ choose a supersymmetric element 
$S(x^{(i_1,\ldots, i_m)}y^{(j_1,\ldots, j_n)})$ in $Div[x, y]$ such that $x^{(i_1,\ldots, i_m)}y^{(j_1, \ldots, j_n)}$ is its leading term.
Denote by $B$ the subalgebra of $\mathcal{S}$ generated by all such elements $S(x^{(i_1,\ldots, i_m)}y^{(j_1,\ldots, j_n)})$. Then $B=\mathcal{S}$.
\end{pr}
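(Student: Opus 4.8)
The plan is to prove $B=\mathcal{S}$ by the same leading-term reduction used in Propositions \ref{pr1} and \ref{gen11}, now considerably streamlined by the fact that the generating family already contains one element for \emph{every} possible leading term. The inclusion $B\subseteq\mathcal{S}$ is immediate: each chosen element $S(x^{(i_1,\ldots,i_m)}y^{(j_1,\ldots,j_n)})$ is supersymmetric by construction, and since $\mathcal{S}$ is a subalgebra, the subalgebra they generate lies inside $\mathcal{S}$. All the content is in the reverse inclusion $\mathcal{S}\subseteq B$.

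For the reverse inclusion I would argue by induction on the lexicographic order of leading terms. Since $\mathcal{S}$ is graded and each homogeneous component $\mathcal{S}_k$ is finite-dimensional, it suffices to treat a nonzero homogeneous $f\in\mathcal{S}_k$, and I may replace each generator by its homogeneous component of the relevant degree: this stays supersymmetric (by gradedness of $\mathcal{S}$) and keeps the same leading term, since passing to the degree-$\deg L$ part of an element whose leading term $L$ already has degree $\deg L$ cannot introduce a larger monomial. The first key observation is that the leading term $L$ of $f$ is automatically symmetrized: because $f$ is symmetric in $x_1,\ldots,x_m$ and in $y_1,\ldots,y_n$ separately, condition (\ref{sym}) forces every monomial to occur together with all its permutations sharing the same coefficient, and the lexicographically largest among them is the one with weakly decreasing exponents in each block. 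The second key observation is that $L$ is marked, essentially by definition: it is a leading term of a supersymmetric element, namely $f$ itself. Consequently the generating family contains an element $S(L)\in B$ whose leading term is exactly $L$.

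It then follows that for a suitable scalar $c$ the difference $f-c\,S(L)$ again lies in $\mathcal{S}_k$ and either vanishes or has a strictly smaller leading term than $f$. Because there are only finitely many monomials of degree $k$, the lexicographic order is well-founded on them, so by the inductive hypothesis $f-c\,S(L)\in B$, whence $f\in B$. This closes the induction and yields $\mathcal{S}_k\subseteq B$ for every $k$, hence $\mathcal{S}\subseteq B$ and finally $B=\mathcal{S}$.

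The argument presents no genuine obstacle beyond bookkeeping; the only points requiring care are the two observations above --- that the leading term of a separately symmetric element is symmetrized, and that any such leading term is by definition marked and hence already represented in the chosen family --- together with the routine reduction to homogeneous generators via the grading. In particular, unlike in Proposition \ref{gen11}, \emph{no products of generators are needed} to realize the required leading terms, since a generator with each marked symmetrized leading term has been supplied at the outset; the $K$-linear span of the elements $S(\cdot)$ already exhausts $\mathcal{S}$, and a fortiori so does the subalgebra $B$.
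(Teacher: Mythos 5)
Your proof is correct and follows essentially the same route as the paper: induction on the lexicographic order of leading terms, subtracting a scalar multiple of the chosen generator $S(\ell(f))$, whose existence is guaranteed because the leading term of any supersymmetric element is symmetrized and, by definition, marked. Your extra care (reducing to homogeneous components so the induction is well-founded, and noting that the $K$-linear span of the $S(\cdot)$ already exhausts $\mathcal{S}$) only sharpens the paper's argument.
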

\begin{proof}
For a nonzero element $f\in \mathcal{S}$ denote by $\ell(f)=x^{(i_1,\ldots, i_m)}y^{(j_1,\ldots, j_n)}$ its leading term, and by 
$c_{\ell(f)}$ the coefficient of $f$ at $\ell(f)$.

We proceed by induction on the lexicographic order of the leading term $\ell(f)$. Clearly the minimal $f=1$ belongs to $B$. 
Assume that all elements $g\in \mathcal{S}$ with $\ell(g)<\ell(f)$ belong to $B$. Then $f-c_{\ell(f)}S(\ell(f))$ belongs to $\mathcal{S}$ and its leading term 
is smaller than $\ell(f)$, which implies
$f-c_{\ell(f)}S(\ell(f))\in B$. This shows that $f=S(\ell(f))+(f-c_{\ell(f)}S(\ell(f)) \in B$.
\end{proof}

We use the above proposition to describe the algebra $\mathcal{S}$ once all marked and unmarked monomials of $Div[x, y]$ and elements 
$S(x^{(i_1,\ldots, i_m)}y^{(j_1,\ldots, j_n)})$ are determined.
We need to accomplish two different tasks.

The first task is to determine all marked monomials by constructing supersymmetric elements that have these monomials as their leading term.

The second task is to show that every remaining monomial is unmarked. We assume that the coefficient $a_{i_1, \ldots, i_m,j_1, \ldots, j_n}$
appearing in the presentation of a supersymmetric element 
\[f=\sum_{(i'_1, \ldots, i'_m;j'_1,\ldots, j'_n)} a_{i'_1, \ldots, i'_m,j'_1, \ldots, j'_n} x^{(i'_1, \ldots, i'_m)}y^{(j'_1, \ldots, j'_n)}\]
is not zero. Using equations characterizing supersymmetric elements, we need to derive that 
$a_{i'_1, \ldots, i'_m,j'_1, \ldots, j'_n}\neq 0$ for some coefficient higher than $a_{i_1, \ldots, i_m,j_1, \ldots, j_n}$.

In what follows we write \[i_u=pk_u+r_u \text{ and }  j_v=pl_v+s_v,\] where $0\leq r_u,s_v<p$ for $u=1,\ldots, m$ and $v=1, \ldots, n$.

The next lemma plays a vital role in what follows.

\begin{lm}\label{11}
Assume that $p$ divides some of $i_1, \ldots, i_m$ and $j_1>0$. Then the
%symmetrized 
monomial $x_1^{(i_1,\ldots, i_m)}y_1^{(j_1,\ldots, j_n)}$ is unmarked in 
$Div[x,y]$. 
\end{lm}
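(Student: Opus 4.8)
The plan is to reduce the assertion to the two-variable algebra $Div[x_1,y_1]$ and then read off the answer from the explicit basis in Proposition \ref{Kbasis}. Concretely, I would fix an index $u$ with $p\mid i_u$, and, since $j_1>0$, delete every variable except $x_u$ and $y_1$. The resulting monomial in $Div[x_u,y_1]\cong Div[x_1,y_1]$ is $x^{(i_u)}y^{(j_1)}$, which still satisfies $p\mid i_u$ and $j_1>0$. If I can show that $x^{(i_u)}y^{(j_1)}$ is unmarked in $Div[x_1,y_1]$, then Lemma \ref{connect} immediately yields that $x^{(i_1,\ldots,i_m)}y^{(j_1,\ldots,j_n)}$ is unmarked in $Div[x,y]$, which is exactly the claim.

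Thus the heart of the matter is the case $m=n=1$, where a monomial is marked precisely when it occurs as the leading term of some supersymmetric element. I would first observe that the three families of basis elements listed in Proposition \ref{Kbasis} have pairwise distinct leading terms: the $x$-exponents of different families occupy disjoint residue classes modulo $p$, and within each family distinct parameters give distinct leading $x$-exponents (for the second family this follows from uniqueness of the base-$p$ digit, since $0\le j\le s-1<p$). Because the leading terms are distinct, no cancellation can occur at the top, so the set of marked monomials of degree $k=pl+s$ is exactly the set of these leading terms: the terms $x^{(t)}y^{(k-t)}$ with $t\not\equiv 0,1,\ldots,s\pmod p$ (first family); the terms $x^{(k-pr-j)}y^{(pr+j)}$ with $0\le j\le s-1$ and $0\le r\le l$, whose $x$-exponent satisfies $k-pr-j\equiv s-j\in\{1,\ldots,s\}\pmod p$ (second family); and, only when $s=0$, the single monomial $x^{(k)}$ (third family).

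From this description I would finish by a short case analysis on $s$. If $s\ge 1$, every marked monomial has $x$-exponent $\not\equiv 0\pmod p$, whereas $x^{(i_u)}y^{(j_1)}$ has $x$-exponent $i_u\equiv 0\pmod p$, so it is unmarked. If $s=0$, the marked monomials are those with $x$-exponent $\not\equiv 0\pmod p$ together with the single pure power $x^{(k)}$, which has $y$-exponent $0$; since $x^{(i_u)}y^{(j_1)}$ has $x$-exponent $i_u\equiv 0\pmod p$ and $y$-exponent $j_1>0$, it matches neither family, so again it is unmarked. Applying Lemma \ref{connect} then completes the argument.

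The main obstacle I anticipate is the bookkeeping in the middle step, namely verifying that the basis of Proposition \ref{Kbasis} is triangular (pairwise distinct leading terms), so that the marked monomials are \emph{exactly} its leading terms, and correctly tracking the residue classes of the $x$-exponents so that $p\mid i_u$ rules out membership in the first two families. The reduction via Lemma \ref{connect} and the final case split are routine by comparison. It is worth noting that the hypothesis $j_1>0$ is needed only in the case $s=0$, to exclude the pure power $x^{(k)}$; when $s\ge 1$ the conclusion already follows from $p\mid i_u$ alone.
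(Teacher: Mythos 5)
Your proposal is correct and takes essentially the same route as the paper: fix an index with $p\mid i_u$, reduce to the two-variable case $Div[x_u,y_1]$ where Proposition \ref{Kbasis} shows $x^{(i_u)}y^{(j_1)}$ is unmarked, and then lift unmarkedness to $Div[x,y]$ via Lemma \ref{connect} (the paper's ``adding variables'' step). The only difference is one of detail: you spell out the triangularity of the basis in Proposition \ref{Kbasis} (pairwise distinct leading terms, hence marked monomials are exactly those leading terms), which the paper leaves implicit, offering instead a short alternative argument directly from two defining equations.
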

\begin{proof}

Assume that $p$ divides $i_t$. It follows from Proposition \ref{Kbasis} that  $x_t^{(i_t)}y_1^{(j_1)}$ in unmarked in $Div[x_t,y_1]$. 
We can also prove it directly by pointing out the equation that involves the variable $a_{i_t, j_1}$ such that all other appearing symmetrized 
variables are higher than $a_{i_t,j_1}$.  

If $0<s_1<p$, then this equation is
\[\sum_{j=0}^{s_1} \binom{s_1}{j} a_{i_t+s_1-j,pl_1+j} =0.\]
If $s_1=0$, then $l_1>0$ and this equation is
\[a_{i_t+p,j_1-p}+a_{i_t,j_1}=0.\]

Adding variables $x_1, \ldots, \widehat{x_r}, \ldots, x_m, y_2, \ldots, y_n$ concludes the claim.
\end{proof}

Let us note that as a special case, if $j_1>0$ and $i_m=0$, then the monomial $x_1^{(i_1,\ldots,i_m)}y_1^{(j_1,\ldots, j_n)}$ is unmarked in $Div[x_1,\ldots, x_m,y_1, \ldots, y_n]$.

To get an intuition about the structure of supersymmetric elements, apply Lemma \ref{cond}, work with symmetrized variables corresponding to $a_{i_1\ldots i_my_1\ldots y_n}$ and look for the free variables given by an echelon form of the matrix of the total linear system given by equations of type (\ref{**}) and (\ref{sup*}).

\begin{lm}\label{p}
For every $k_1>1$, the monomial $x_1^{(pk_1)}$ is marked in $Div[x,y]$.
\end{lm}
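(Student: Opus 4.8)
The plan is to prove the statement in three stages: reduce "marked" to a statement about a single coefficient, reduce to prime powers, and then construct the required elements.

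First I would record that in the lexicographic order the monomial $x_1^{(pk_1)}$ is the \emph{largest} monomial of degree $pk_1$, because concentrating all of the degree in the first $x$-variable maximizes the leading entry of the exponent tuple. Hence any supersymmetric element of degree $pk_1$ whose coefficient $a_{pk_1,0,\ldots,0}$ is nonzero automatically has $x_1^{(pk_1)}$ as its leading term. So it suffices to produce, for each $k_1>1$, one supersymmetric element of degree $pk_1$ with $a_{pk_1,0,\ldots,0}\neq 0$.

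Second, I would reduce to prime powers. Suppose that for every $s\ge 1$ there is a supersymmetric element $F_s\in Div[x,y]$ with leading term $x_1^{(p^s)}$. Since $p\mid pk_1$, the $p$-adic expansion has vanishing units digit, $pk_1=\sum_{j\ge 1}c_jp^j$ with $0\le c_j<p$. As $\mathcal S$ is a subalgebra, the product $\prod_{j\ge1}F_j^{c_j}$ is supersymmetric, and its leading term is the product of the leading terms $\prod_{j\ge1}\bigl(x_1^{(p^j)}\bigr)^{c_j}$; since there is no $p$-adic carry when the exponents $p^j$ are added with these multiplicities, this is a nonzero scalar multiple of $x_1^{(pk_1)}$. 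Thus $x_1^{(pk_1)}$ is marked once the $F_s$ are constructed.

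Third, and this is the crux, I would construct $F_s$ through Proposition \ref{cond}. Setting $a_{p^s,0,\ldots,0}=1$ and looking at the block with $t=i_1+j_1=p^s$ and all frozen indices zero (here $s=0$ in the notation of that proposition), the equations (\ref{sup*}) form a single chain forcing $a_{p^s-pr,0,\ldots,0,pr,0,\ldots,0}=(-1)^r$, which ties the top coefficient through the pure monomials $x_1^{(p^s-pr)}y_1^{(pr)}$ down to $y_1^{(p^s)}$. Because $a_{p^s,0,\ldots,0}$ is the lexicographically greatest coefficient of its degree, no defining equation expresses it through strictly larger coefficients, and it occurs only in this chain and in the symmetry relations (\ref{sym}); hence it cannot be forced to vanish. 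The remaining task is to extend this partial assignment to a consistent solution of the \emph{whole} defining system, which by Proposition \ref{cond} is exactly a supersymmetric $F_s$.

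The main obstacle is precisely this global consistency. One cannot simply set all other coefficients to zero: the termwise symmetrization of the $Div[x_1,y_1]$ generator $G_s=\sum_r(-1)^rx_1^{(pr)}y_1^{(p^s-pr)}$ from Proposition \ref{Kbasis} already violates (\ref{sup*}) in every block where some $y_j$ is frozen at a positive value, since those equations demand coefficients of genuine cross-monomials $x_1^{(a)}y_1^{(b)}y_j^{(c)}$ with $a,b,c>0$ that no two-variable element supplies. I would therefore supply the missing cross-terms by completing $G_s$ with symmetric corrections that are complete-homogeneous in the $y$'s and elementary-symmetric in the $x$'s, in the spirit of the characteristic-zero generators $C_t$, and verify (\ref{**}) and (\ref{sup*}) in each frozen block. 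An equivalent, possibly cleaner, route is to argue directly on the defining linear system: order the symmetrized coefficients lexicographically and bring the equations (\ref{**}), (\ref{sup*}), (\ref{sym}) to echelon form, checking that $a_{p^s,0,\ldots,0}$ is never among the variables forced to be zero. Establishing this echelon structure uniformly across all frozen blocks is the technical heart of the proof.
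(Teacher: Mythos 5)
Your first two stages are sound but they are not where the difficulty lies: $x_1^{(pk_1)}$ is indeed the largest monomial of its degree, and the product $\prod_{j\geq 1}F_j^{c_j}$ does have leading term $x_1^{(pk_1)}$ since $\mathcal{S}$ is a subalgebra and the exponents $c_jp^j$ add without $p$-adic carries. The genuine gap is your third stage: constructing $F_s$ is itself an instance of the lemma (the case $k_1=p^{s-1}$), so the reduction to prime powers does not reduce the core difficulty, and you never complete the construction. Observing that $a_{p^s,0,\ldots,0}$ occurs only in one chain of equations and is "not forced to vanish" by any single equation does not produce a consistent solution of the whole defining system with that coefficient nonzero; you explicitly defer this ("the technical heart of the proof") rather than prove it. Your diagnosis of the obstacle is correct — the naive symmetrization of $G_s=\sum_r(-1)^rx_1^{(pr)}y_1^{(p^s-pr)}$ really does violate (\ref{sup*}) in blocks where some $y_j$ is frozen at a positive multiple of $p$ — but your proposed repair (corrections that are elementary-symmetric in the $x$'s, in the spirit of $C_t$) points in the wrong direction: the cross-terms demanded by those chains carry arbitrary multiples of $p$ on \emph{every} $x$-variable simultaneously, not exponents bounded by $1$.

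The paper closes exactly this gap with one explicit element, valid directly for every $k_1>1$ with no reduction to prime powers:
\[\sum_{\substack{0\leq u_1,\ldots, u_m,v_1, \ldots, v_n\\ u_1+\ldots+u_m+v_1+\ldots +v_n=k_1}}  (-1)^{v_1+\ldots +v_n}\, x^{(u_1p,\ldots,u_mp)}y^{(v_1p,\ldots,v_np)}.\]
This is the full cross-term completion you were missing. Its supersymmetry is immediate from Proposition \ref{cond}: every exponent in its support is divisible by $p$, so every equation of type (\ref{**}) involves only coefficients outside the support and holds trivially, while each equation of type (\ref{sup*}) pairs two support monomials differing by moving $p$ from $x_1$ to $y_1$, and the sign $(-1)^{v_1+\ldots+v_n}$ flips under that move, so these equations hold as well. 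Symmetry in the $x$'s and in the $y$'s separately is manifest, and the lexicographically largest term (take $u_1=k_1$) is $x_1^{(pk_1)}$. I recommend you verify this element satisfies the system and substitute it for your third stage; your first stage then finishes the proof, and your second stage becomes unnecessary.
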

\begin{proof}
It is straightforward to verify that the monomial $x_1^{(pk_1)}$ is the leading term of the supersymmetric element 
\[\sum_{\substack{0\leq u_1,\ldots, u_m,v_1, \ldots, v_n\\u_1+\ldots+u_m+v_1+\ldots +v_n=k_1}}  (-1)^{v_1+\ldots +v_n} x^{(u_1p,\ldots,u_mp)}y^{(v_1p,\ldots,v_np)}\]
in $Div[x,y]$.
\end{proof}

\section{Supersymmetric elements in $Div[x_1,\ldots, x_m,y_1]$}

Throughout this section, we assume $m>1$. Let us start with the following lemma.

\begin{lm}\label{span}
Let $m>1$.
Denote by $C$ the subalgebra of $Div[x_1,\ldots, x_m,y_1]$ generated by symmetrized monomials

\begin{itemize}
\item $\rho_i=x_1\ldots x_i$ for $i=1, \ldots m$;  

\item$\rho_{m+j_1}=\rho_m y_1^{(j_1)}=E_{m+j_1}$ for $j_1>0$ (corresponding to generators from characteristic zero case); 

\item $x_1^{(p^s)}$ for $s>1$;

\item $\tau_{i_1,\ldots, i_m,j_1}=x^{(i_1,\ldots,i_m)}y_1^{(j_1)}$, where $i_1+\ldots + i_m$ is divisible by $p$ but none of $i_1, \ldots, i_m$ are divisible by $p$, 
and $j_1\geq 0$. 
\end{itemize}

Then $C$ has a $K$-basis consisting of symmetrized monomials 

\begin{itemize}
\item $x_1^{(k)}$ where $k$ is divisible by $p$;

\item $x^{(i_1,\ldots, i_m)}y_1^{(j_1)}$, where $p$ does not divide $i_1, i_2, \ldots, i_m$ and $j_1\geq 0$.
\end{itemize}
\end{lm}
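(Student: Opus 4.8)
The plan is to prove the statement by a leading-term (triangularity) argument in the spirit of Propositions \ref{pr1} and \ref{gen11}, using that the lexicographic order is multiplicative on $Div[x_1,\ldots,x_m,y_1]$. The starting observation is that for any elements $g,h$ one has $\ell(gh)=\ell(g)\ell(h)$ whenever the product of the two leading monomials is nonzero (it is then the unique largest term in the expansion and cannot cancel), and that a product of divided-power monomials $z^{(a)}z^{(b)}$ is a nonzero scalar multiple of $z^{(a+b)}$ exactly when there is no $p$-adic carry in $a+b$, and is $0$ otherwise. Hence the leading monomial of any product of the given generators is the carry-free sum of the generators' leading monomials: $x_1\cdots x_i$ for $\rho_i$, $x_1\cdots x_m y_1^{(j_1)}$ for $\rho_{m+j_1}=E_{m+j_1}$, $x_1^{(p^s)}$ for the pure powers, and $x^{(i_1,\ldots,i_m)}y_1^{(j_1)}$ for $\tau_{i_1,\ldots,i_m,j_1}$. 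I would fix this reduction once and afterwards track only these leading monomials together with the base-$p$ digits of their exponents.

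The first half (spanning) is to show that every monomial in the claimed basis occurs as the leading term of an element of $C$. For a pure power $x_1^{(k)}$ with $p\mid k$ I would take the $p$-adic expansion $k=\sum_{s\ge 1}c_sp^s$ and form $\prod_{s\ge 1}(x_1^{(p^s)})^{c_s}$; since the digits $c_s$ are $<p$ and lie in distinct positions no carry occurs, and the product is a nonzero multiple of $x_1^{(k)}$. For a monomial $x^{(i_1,\ldots,i_m)}y_1^{(j_1)}$ whose nonzero $x$-exponents are prime to $p$ I would peel off its $p$-adic layers: in the lowest layer every relevant digit is nonzero, which is exactly the situation handled by the $\rho_i$ and by the $\tau$-generators, whose defining condition $p\mid i_1+\cdots+i_m$ together with $p\nmid i_k$ is what keeps the assembling products carry-free; the higher pure-$x_1$ layers are supplied by the $x_1^{(p^s)}$, and the factor $y_1^{(j_1)}$ is attached through $\tau$ or through $\rho_{m+j_1}=E_{m+j_1}$. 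This mirrors the layer-by-layer construction in the proof of Proposition \ref{pr1}.

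The second half (no other leading terms) is to show that the leading monomial of an arbitrary product of generators is always one of the two listed types; with the triangularity this pins down the basis and gives linear independence, since distinct monomials are independent. The key computation is a digit/carry analysis on the carry-free sum defining the leading monomial. For each variable $x_k$ with $k\ge 2$, the units digit of its exponent is a carry-free sum of the units digits of the contributing generators, each of which is either $0$ or prime to $p$; so if $p$ divides that exponent it must equal $0$, whence every nonzero $x_k$-exponent with $k\ge2$ is prime to $p$. Applying the same reasoning to $x_1$ splits into two cases: either a coprime-type generator contributes to $x_1$, forcing $p\nmid i_1$ and a monomial of the second type, or only the pure powers $x_1^{(p^s)}$ contribute, giving a pure $x_1$-power with exponent divisible by $p$, of the first type. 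A standard reduction—subtract the scalar multiple of the chosen generator matching the leading term and induct on the lexicographic order, as in Proposition \ref{generators}—then shows these leading terms span $C$.

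I expect the main obstacle to be this second half. Controlling the $p$-adic carries simultaneously in all variables is delicate, and the real difficulty is to rule out the "mixed" monomials in which several $x$-variables carry exponents divisible by $p$: these are precisely the configurations that the coprimality/sum conditions on $\tau$ are designed to avoid, and one must verify that no product of generators can produce such a monomial at the top. This is exactly the phenomenon isolated by Lemma \ref{11} and the remark following it (a vanishing smallest $x$-exponent together with $j_1>0$ forces unmarkedness), so I would lean on that lemma and on the observation preceding Lemma \ref{p}—passing to the defining linear system of Proposition \ref{cond} and reading off the free variables from its echelon form—to certify that no such monomial can head an element of $C$.
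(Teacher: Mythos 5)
Your overall framework (reduce everything to carry-free sums of exponent vectors, use that leading terms multiply and that distinct monomials are linearly independent) is the right one, and your second half is essentially sound: it does prove that every monomial occurring in $C$ is either a pure power of $x_1$ or has all of its nonzero $x$-exponents prime to $p$. The fatal gap is the first half: the spanning step cannot be carried out, because the claimed equality is false, so the problem is not one of missing detail. Two concrete failures. (i) $x_1^{(p)}$ is claimed in the first bullet, but the only generators involving no variable other than $x_1$ are $\rho_1=x_1$ and $x_1^{(p^s)}$ with $s>1$, and a carry-free sum of $1$'s and of powers $p^s$ with $s\geq 2$ always has digit $0$ in the $p$'s place; your construction $\prod_{s\geq 1}(x_1^{(p^s)})^{c_s}$ silently uses $x_1^{(p)}$, which is not among the stated generators. (ii) Independently of the $s>1$ versus $s\geq 1$ issue, take $m=2$ and the monomial $x_1^{(p+1)}x_2^{(p+1)}$, which lies in the claimed basis on any reading of the second bullet, since both exponents are prime to $p$. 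A product of generators equal to a nonzero multiple of it can involve only generators of $y$-degree zero, i.e. $\rho_1$, $\rho_2$, $x_1^{(p^s)}$, and monomials $\tau_{i_1,i_2,0}$. For $m=2$ every $\tau$ has units digits $r_1,r_2\in\{1,\dots,p-1\}$ with $r_1+r_2\equiv 0\pmod p$, hence $r_1+r_2=p$; in a carry-free sum the units digits add exactly, and the units digits of the target total $1+1=2<p$, so no $\tau$ can occur in the factorization. But then no factor has a nonzero digit in the $p$'s place of its $x_2$-exponent, whereas $p+1$ does. Hence $x_1^{(p+1)}x_2^{(p+1)}\notin C$. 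This is exactly the "deep digit on $x_k$, $k\geq 2$" configuration you flagged as the main obstacle, and it cannot be overcome: the condition $p\mid i_1+\cdots+i_m$ on the $\tau$'s does not "keep the assembling products carry-free" as you assert; on the contrary, it forces every $\tau$ to consume at least $p$ of the units-digit budget, which is precisely what makes such monomials unreachable.

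Your proposed fallback, certifying the problematic monomials via Lemma~\ref{11} and the linear system of Proposition~\ref{cond}, cannot repair this, because those results concern markedness (being the leading term of a supersymmetric element), which is a genuinely different property from membership in the monomial algebra $C$, and the two properties diverge at exactly this point: for $p=3$ the element $u=x_1^{(4)}x_2^{(4)}-(x_1^{(4)}x_2^{(3)}+x_1^{(3)}x_2^{(4)})y_1+2x_1^{(3)}x_2^{(3)}y_1^{(2)}$ is symmetric in $x_1,x_2$ and satisfies $(\frac{d}{dx_1}+\frac{d}{dy_1})u=(x_1-y_1)u'$ with $u'=-x_1^{(3)}x_2^{(3)}+x_1x_2^{(4)}y_1+x_1x_2^{(3)}y_1^{(2)}-x_2^{(4)}y_1^{(2)}$, so $x_1^{(4)}x_2^{(4)}$ is marked, yet it is not in $C$ by the argument above. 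For what it is worth, the paper offers no proof to compare against (the proof is "left to the reader"), but no correct proof can exist: what is true, and what your digit analysis establishes, is only the inclusion of the monomial basis of $C$ into the claimed list, not the reverse. The defect is also not harmless for the paper: Lemma~\ref{span} is the step by which the proof of the section's main theorem converts marked monomials into leading terms of elements of the generated subalgebra, so either the generator list must be enlarged (at minimum $x_1^{(p)}$, and further generators covering monomials such as $x_1^{(p+1)}x_2^{(p+1)}$) or the claimed basis must be cut down.
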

\begin{proof}
The proof is left to the reader.
\end{proof}

%We have seen earlier that the monomials $\rho_j$ appearing in Lemma \ref{span} are marked in $Div[x_1, \ldots, x_m,y_1]$. 
By Lemma \ref{p}, the monomials $x_1^{(p^s)}$ for $s>1$ are marked in $Div[x_1, \ldots, x_m,y_1]$. 
The following lemma shows that the monomials $\rho_i$, $\rho_{m+j_1}$, and $\tau_{i_1,\ldots, i_m,j_1}$ are also marked in $Div[x_1, \ldots, x_m,y_1]$. 
Hence all monomials appearing in Lemma \ref{span} are marked in $Div[x_1, \ldots, x_m,y_1]$.

\begin{pr}\label{jeden}
Every symmetrized monomial $x^{(i_1,\ldots, i_m)}y_1^{(j_1)}$, where $j_1\geq 0$ and none of positive indices $i_1, \ldots, i_m$ is divisible by $p$, 
is marked in $Div[x_1,\ldots, x_m,y_1]$.
\end{pr}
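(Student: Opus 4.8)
The plan is to show directly that every admissible target monomial $T=x^{(i_1,\ldots,i_m)}y_1^{(j_1)}$ is the leading term of some supersymmetric element, using the characterization of Proposition~\ref{cond}. For the ``easy'' monomials with all $i_k<p$ this is immediate by multiplying the characteristic-zero generators: taking a suitable product $E_1^{a_1}\cdots E_m^{a_m}E_{m+j_1}^{\,}$ (and, when $p$-powers appear only on $x_1$, the Frobenius-type elements of Lemma~\ref{p}) produces a supersymmetric element whose leading term is, up to a nonzero scalar, exactly $T$, because the leading term of a product is the carry-free product of the leading terms. The point is that for exponents $\geq p$ lying on $x_2,\ldots,x_m$ this product strategy fails — the needed factor would have to be an unsorted monomial, and any sorted substitute introduces a $p$-adic carry that kills the top term (already for $x_1^{(p+1)}x_2^{(p-1)}$). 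So the genuine content, and the cases I would treat by hand, are the monomials $\tau_{i_1,\ldots,i_m,j_1}$, handled through the defining linear system rather than by a product.

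For these I would reformulate ``$T$ is marked'' as: the coefficient $a_T$ can be made nonzero while all strictly higher symmetrized coefficients vanish. As used throughout (and in Lemma~\ref{11}), $T$ fails to be marked exactly when some defining equation of type (\ref{**}) or (\ref{sup*}) has $a_T$ as its \emph{unique lowest} symmetrized term, since killing the higher coefficients then forces $a_T=0$. Thus the whole proof reduces to checking that, under the hypothesis, no defining equation has $a_T$ as its lowest term. First I would enumerate the equations containing $a_T$: because (\ref{**}) and (\ref{sup*}) single out the pair $(x_1,y_1)$ with the other $x$-exponents frozen, and (\ref{sym}) identifies $a_T$ with all its $\Sigma_m$-conjugates, $a_T$ appears once in each block obtained by moving one exponent $i_c$ into the $x_1$-slot and freezing $F_c=(i_1,\ldots,\widehat{i_c},\ldots,i_m)$. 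Writing $i_c=pk_c+\alpha_c$, $j_1=p\delta+\gamma$, $t_c=i_c+j_1$, $s_c=t_c\bmod p$, a short computation places $a_T$ in block $c$ only in the equation (\ref{**}) with $r=\delta$, and only when $\gamma\le s_c$.

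The heart of the argument is the dichotomy for each $c$ with $i_c>0$, where the hypothesis gives $\alpha_c\ge 1$. If $\alpha_c+\gamma\ge p$ then $s_c=\alpha_c+\gamma-p$ and $\gamma-s_c=p-\alpha_c\ge 1$, so $\gamma>s_c$ and $a_T$ does not occur in block $c$ at all. If $\alpha_c+\gamma<p$ then $s_c=\alpha_c+\gamma>\gamma$, and the equation (\ref{**}) carrying $a_T$ also carries the term of top index $j'=s_c$, whose $x_1$-slot exponent equals $t_c-p\delta-s_c=i_c-\alpha_c<i_c$; its symmetrization has a strictly smaller $x$-exponent multiset than $T$, hence is strictly lower than $T$. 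Therefore every equation containing $a_T$ also contains a strictly lower term, $a_T$ is never lowest, and $T$ is marked. I would emphasize that this is precisely the reverse of Lemma~\ref{11}: if some positive $i_c$ were divisible by $p$, then $\alpha_c=0$, the would-be lower term $j'=s_c$ coincides with $a_T$ itself, and $a_T$ becomes the lowest term of its block equation, recovering the unmarked conclusion of Lemma~\ref{11}.

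The main obstacle I anticipate is the global bookkeeping imposed by the symmetry equations (\ref{sym}): a monomial sitting at the top of the block that isolates one variable can sit near the bottom of the block isolating another (visible already for $x_1^{(p+1)}x_2^{(p-1)}$, whose two blocks carry residues $s=1$ and $s=p-1$), so all $m$ blocks must be controlled simultaneously and each strictly-lower witness re-symmetrized and compared to $T$. The same computation also explains the one boundary case that must be excluded: when $j_1>0$, an exponent $i_c=0$ moved into the $x_1$-slot behaves like a $p$-divisible one ($\alpha_c=0$), making $a_T$ the lowest term — so one reads the hypothesis as requiring all $i_k>0$ whenever $j_1>0$ (as holds for $\rho_i$, $E_{m+j_1}$, and the $\tau_{i_1,\ldots,i_m,j_1}$). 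Once this verification is finished, consistency of the remaining homogeneous, lower-triangular system is automatic, and together with Lemma~\ref{span} and Proposition~\ref{generators} it follows that all the listed monomials are marked.
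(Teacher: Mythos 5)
Your local bookkeeping is sound: the dichotomy $\alpha_c+\gamma\ge p$ versus $\alpha_c+\gamma<p$ correctly identifies the equations of type (\ref{**}) containing $a_T$, the product argument does dispose of the monomials with all exponents $<p$, and you rightly noticed that the hypothesis must be read as forcing all $i_k>0$ whenever $j_1>0$ (otherwise Lemma \ref{11} applies). But the core of your proof rests on an equivalence that is false as stated and unproven in the direction you need. You claim that $T$ fails to be marked \emph{exactly when} some single defining equation has $a_T$ as its unique lowest symmetrized term, and you then conclude markedness from the observation that every equation containing $a_T$ also contains a strictly lower term. Only one direction is valid: if $a_T$ is lowest in some equation, then $T$ is unmarked (this is all that Lemma \ref{11} uses). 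The converse fails, because being marked requires a solution of the \emph{entire} defining linear system with $a_T\neq 0$ and all higher coefficients equal to zero, and linear combinations of several equations can force $a_T=0$ even when no single equation exhibits $a_T$ as its lowest term. The paper itself contains counterexamples to your principle: in Lemma \ref{6.5} the equation containing $a_{i_1,j_1,j_2}$ also contains the strictly lower term $a_{i_1-1,j_1+1,j_2}$, yet the monomial is unmarked, because a second equation expresses that lower term through coefficients higher than $a_{i_1,j_1,j_2}$; the same chaining occurs in Lemma \ref{blowout} and in the third bullet of Proposition \ref{styri}. Consequently your final assertion that ``consistency of the remaining homogeneous, lower-triangular system is automatic'' is unjustified: the reduced system is not triangular, and its solvability with $a_T\neq 0$ is precisely the content of the proposition, not a formality.

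What is missing is the construction that the paper's proof supplies. There, the element
\[\sum_{v} (-1)^v \sum_{(m_1,\ldots,m_v) \in M_v} \binom{r_{m_{1}}+\ldots +r_{m_v}+s_1}{r_{m_1}, \ldots, r_{m_{v}},s_1}
f_{i_1, \ldots, i_{m_1}-r_{m_1}, \ldots, i_{m_v}-r_{m_v}, \ldots,  i_m}\,y_1^{(j_1+r_{m_1}+\ldots +r_{m_v})}\]
is written down explicitly, where $M_v$ runs over tuples with $r_{m_1}+\cdots+r_{m_v}+s_1<p$; one then checks that every defining equation meeting the support of this element contains exactly two of its terms, and that the multinomial identities (e.g.\ $\binom{r_{m_1}+\ldots +r_{m_v}+s_1}{r_{m_t}}\binom{r_{m_1}+\ldots +\widehat{r_{m_t}}+\ldots +r_{m_v}+s_1}{r_{m_1}, \ldots, \widehat{r_{m_t}}, \ldots, r_{m_v},s_1}=\binom{r_{m_1}+\ldots +r_{m_v}+s_1}{r_{m_1}, \ldots, r_{m_v},s_1}$) make all of these equations hold simultaneously. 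This global consistency check—each coefficient of the element occurs in many equations, paired with different neighbors—is the substance of the proof and has no counterpart in your proposal. To repair your argument you would have to either prove your claimed equivalence in the case $n=1$ (which, in effect, amounts to producing such a solution) or exhibit an explicit supersymmetric element with leading term $x^{(i_1,\ldots,i_m)}y_1^{(j_1)}$ as the paper does.
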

\begin{proof}
Recall the notation $j_1=pl_1+s_1$ and $i_u=pk_u+r_u$ for $u=1,\ldots, m$, where $0\leq r_u,s_1<p$.

Denote by $M_v$ the set of all ordered $v$-tuples of indices $(m_1, \ldots, m_v)$ such that
\[1\leq m_1<\ldots<m_v\leq m, 0<r_{m_1}, \ldots, r_{m_v} \mbox{ and } r_{m_1}+\ldots r_{m_v}+s_1<p \] and by $M$ the union of all such $M_v$.

We show that the element
\[\sum_{v} (-1)^v \sum_{(m_1,\ldots,m_v) \in M_v} \binom{r_{m_{1}}+\ldots +r_{m_v}+s_1}{r_{m_1}, \ldots, r_{m_{v}},s_1}
f_{i_1, \ldots, i_{m_1}-r_{m_1}, \ldots, i_{m_v}-r_{m_v}, \ldots,  i_m}y_1^{(j_1+r_{m_1}+\ldots +r_{m_v})}, \]
that has $$x^{(i_1,\ldots, i_m)}y_1^{(j_1)}$$ as its leading term (corresponding to $v=0$), is supersymmetric in $Div[x_1,\ldots, x_m,y_1]$.

Denote by $N_v$ the set of all variables
\[a_{i_1, \ldots, i_{m_1}-r_{m_1}, \ldots, i_{m_v}-r_{m_v}, \ldots,  i_m, j_1+r_{m_1}+\ldots +r_{m_v}},\]
where $(m_1,\ldots, m_v) \in M_v$, and by $N$ the union of all such $N_v$.

Every defining equation, that involve a variable from $N$, contains 
exactly two variables from $N$.
The variables
\[a_{i_1, \ldots, i_{m_1}-r_{m_1}, \ldots, i_{m_t}-r_{m_t}, \ldots, i_{m_v}-r_{m_v}, \ldots,  i_m, j_1+r_{m_1}+\ldots +r_{m_v}}\] 
and 
\[a_{i_1, \ldots, i_{m_1}-r_{m_1}, \ldots, i_{m_t}, \ldots, i_{m_v}-r_{m_v}, \ldots,  i_m, j_1+r_{m_1}+\ldots+\widehat{r_{m_t}}+\ldots +r_{m_v}},\] 
appear in the equation
\[\sum_{j=0}^{r_{m_1}+\ldots +r_{m_v}+s_1} \binom{r_{m_1}+\ldots +r_{m_v}+s_1}{j} \times\]
\[a_{i_1, \ldots, i_{m_1}-r_{m_1}, \ldots, i_{m_t}+r_{m_1}+\ldots+\widehat{r_{m_t}}+\ldots +r_{m_v}+s_1-j, \ldots, i_{m_v}-r_{m_v},\ldots,  i_m, j_1-s_1+j}=0
\] 
corresponding to the values $j=r_{m_1}+\ldots + r_{m_v}+s_1$ and $j=r_{m_1}+\ldots + \widehat{r_{m_t}}+\ldots+ r_{m_v}+s_1$
where $(m_1, \ldots, \widehat{m_t}, \ldots, m_v)\in M_{v-1}$ is obtained from $(m_1, \ldots, m_v)\in M_v$ by removing $m_t$.

Assume $(m'_1, \ldots, m'_t, \ldots, m'_{v+1})\in M_{v+1}$ is obtained from $(m_1, \ldots, m_v)\in M_v$ by inserting $m'_t$.
That means that $m'_l=m_l$ for $l<t$ and $m'_{l+1}=m_l$ for $l\geq t$. Then the variables
\[a_{i_1, \ldots, i_{m_1}-r_{m_1}, \ldots, i_{m'_t}, \ldots, i_{m_{v}}-r_{m_{v}}, \ldots,  i_m, r_{m_1}+\ldots +r_{m_v}+j_1}\]
and 
\[a_{i_1, \ldots, i_{m'_1}-r_{m'_1}, \ldots, i_{m'_t}-r_{m'_t}, \ldots, i_{m'_{v+1}}-r_{m'_{v+1}}, \ldots,  i_m, r_{m'_1}+\ldots +r_{m'_{v+1}}+j_1}\] 
appear in the equation
\[\sum_{j=0}^{r_{m'_1}+\ldots +r_{m'_{v+1}}+s_1} \binom{r_{m'_1}+\ldots +r_{m'_{v+1}}+s_1}{j} \times\]
\[a_{i_1, \ldots, i_{m'_1}-r_{m'_1}, \ldots, i_{m'_t}+r_{m'_1}+\ldots \widehat{r_{m'_t}} +r_{m'_{v+1}}+s_1-j, \ldots, i_{m'_{v+1}}-r_{m'_{v+1}}, \ldots,  i_m, j_1-s_1+j}=0\] 
corresponding to the values $j=r_{m_1}+\ldots + r_{m_v}+s_1$ and $j=r_{m'_1}+\ldots +\ldots+ r_{m'_{v+1}}+s_1$.

Since there are no other equations involving variables from $N$, we can set the value of every variable not in $N$ equal to zero. Then the linear system turns into
a smaller one, consisting of equations
\[\begin{aligned}
&a_{i_1, \ldots, i_{m_1}-r_{m_1}, \ldots, i_{m_t}-r_{m_t}, \ldots, i_{m_v}-r_{m_v},\ldots,  i_m, r_{m_1}+\ldots +r_{m_v}+j_1}\\
&+\binom{r_{m_1}+\ldots +r_{m_v}+s_1}{r_{m_t}} 
a_{i_1, \ldots, i_{m_1}-r_{m_1}, \ldots, i_{m_t}, \ldots, i_{m_v}-r_{m_v},\ldots,  i_m, r_{m_1}+\ldots+\widehat{r_{m_t}}+\ldots +r_{m_v}+j_1}=0\\
\end{aligned}\] 
\[\begin{aligned}&
\binom{r_{m'_1}+\ldots +r_{m'_{v+1}}+s_1}{r_{m'_t}} a_{i_1, \ldots, i_{m_1}-r_{m_1}, \ldots, i_{m'_t}, \ldots, i_{m_v}-r_{m_v},\ldots,  i_m, r_{m_1}+\ldots+\ldots +r_{m_v}+j_1}\\
&+a_{i_1, \ldots, i_{m'_1}-r_{m'_1}, \ldots, i_{m'_t}-r_{m'_t}, \ldots, i_{m'_{v+1}}-r_{m'_{v+1}},\ldots,  i_m, r_{m'_1}+\ldots +r_{m'_{v+1}}+j_1}=0
\end{aligned}\] 
for all $(m_1, \ldots, m_v)\in M_v$ and $(m'_1,\ldots, m'_{v+1})\in M_{v+1}$ as above.

Since
\[\binom{r_{m_1}+\ldots +r_{m_v}+s_1}{r_{m_t}}\binom{r_{m_1}+\ldots +\widehat{r_{m_t}}+\ldots +r_{m_v}+s_1}{r_{m_1}, \ldots, \widehat{r_{m_t}}, \ldots, r_{m_v},s_1}=
\binom{r_{m_1}+\ldots +r_{m_v}+s_1}{r_{m_1}, \ldots, r_{m_v},s_1}\]
and
\[\binom{r_{m'_1}+\ldots +r_{m'_{v+1}}+s_1}{r_{m'_t}}
\binom{r_{m'_1}+\ldots \widehat{r_{m'_t}}+\ldots + r_{m'_{v+1}}+s_1}{r_{m'_1}, \ldots, \widehat{r_{m'_t}},\ldots, r_{m'_{v+1}},s_1}=
\binom{r_{m'_1}+\ldots +r_{m'_{v+1}}+s_1}{r_{m'_1}, \ldots, r_{m'_{v+1}},s_1},\]
by setting
\[a_{i_1, \ldots, i_{m_1}-r_{m_1}, \ldots, i_{m_v}-r_{m_v},\ldots,  i_m, j_1+r_{m_1}+\ldots +r_{m_v}}=
(-1)^v \binom{r_{m_1}+\ldots +r_{m_v}+s_1}{r_{m_1}, \ldots, r_{m_v},s_1}\]
we obtain a solution of the defining linear system and the corresponding supersymmetric element.
\end{proof}

In the next proposition we consider monomials in $Div[x_1,x_2,y_1]$. 
The result then extends to monomials in $Div[x_1,\ldots, x_m,y_1, \ldots, y_n]$.

\begin{pr}\label{5.5}
Every symmetrized monomial 
$x^{(i_1,i_2)}y_1^{(j_1)}$ such that $i_2>0$ and $p$ divides $i_1$ or $i_2$
is unmarked in $Div[x_1,x_2,y_1]$.
\end{pr}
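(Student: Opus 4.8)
The plan is to work in the symmetrized coordinates of Proposition~\ref{cond} and argue by contradiction, exploiting the defining equations (\ref{**}) and (\ref{sup*}). Write $i_1=pk_1+r_1$ and $i_2=pk_2+r_2$ with $0\le r_1,r_2<p$, so that the hypothesis becomes $r_1=0$ or $r_2=0$. First I would dispose of $j_1>0$: then $p$ divides one of $i_1,i_2$ while $j_1>0$, so the monomial is unmarked by Lemma~\ref{11}, and it remains to treat $j_1=0$, i.e. the monomial $x^{(i_1,i_2)}$. Suppose it were marked, so there is a supersymmetric $f=\sum a_{u,v,w}\,x^{(u,v)}y_1^{(w)}$ with $a_{i_1,i_2,0}\neq 0$ whose strictly larger symmetrized coefficients all vanish; concretely $a_{u,v,w}=0$ whenever $\max(u,v)>i_1$, since a symmetrized monomial of the same degree beats $x^{(i_1,i_2)}$ exactly when its largest $x$-exponent exceeds $i_1$. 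The aim is to show that these vanishing conditions force $a_{i_1,i_2,0}=0$.

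The case $r_1=0$ (that is, $p\mid i_1$) I expect to be clean. If moreover $r_2=0$, freezing $x_2=i_2$ and using the rigid two-term relations (\ref{sup*}) gives $a_{i_1,i_2,0}=\pm a_{0,i_2,i_1}$; symmetrizing to $a_{i_2,0,i_1}$ and freezing $x_2=0$, where the total degree $k$ is again divisible by $p$, the relations (\ref{sup*}) connect this to $a_{k,0,0}$, which is strictly higher and hence zero --- a contradiction. If $r_2>0$, I would instead use the symmetry (\ref{sym}) to freeze $x_2=i_1$ and read off from (\ref{**}) the single key equation $\sum_{j=0}^{r_2}\binom{r_2}{j}a_{i_1,i_2-j,j}=0$, namely the $r=0$ block of the pair $(x_1,y_1)$ of total degree $i_2$. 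For each $j\ge 1$ the coefficient $a_{i_1,i_2-j,j}$ has $p\mid i_1$ and positive $y_1$-degree, so freezing $x_2=i_2-j$ places it at the bottom of the $r=0$ block of a pair of total degree $i_1+j$ (here $p\mid i_1$ is exactly what makes the residue equal to $j$, so the block is $r=0$); every other entry of that block has first index larger than $i_1$ and therefore vanishes, forcing $a_{i_1,i_2-j,j}=0$. Substituting into the key equation kills its $j=0$ term $a_{i_1,i_2,0}$, the desired contradiction.

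The main obstacle is the remaining case $r_1>0$, $r_2=0$, i.e. $p\mid i_2$ but $p\nmid i_1$: now the divisible variable is the \emph{smaller} one, its mass cannot be lifted past $i_1$ inside a single block, and the ``kill the lower terms'' argument above fails --- freezing $x_2=i_2$ relates $a_{i_1,i_2,0}$ only to coefficients $a_{i_1-j,i_2,j}$ with first index at most $i_1$, none of which is among the coefficients already known to vanish. My plan is therefore to build an explicit chain of defining equations that temporarily descends and then climbs above $i_1$: first move $x_2$ entirely into $y_1$ by (\ref{sup*}) (legitimate since $p\mid i_2$), reaching $a_{i_1,0,i_2}$; then transfer the residue $r_1$ from the $x_1$-slot into the $x_2$-slot, arriving at $a_{pk_1,r_1,i_2}$ whose first coordinate $pk_1$ is divisible by $p$; finally freeze $x_2=r_1$ and use that the pair $(x_1,y_1)$ now has total degree $pk_1+i_2=p(k_1+k_2)$ divisible by $p$, so that (\ref{sup*}) moves all of $y_1$ into $x_1$ and produces $a_{p(k_1+k_2),r_1,0}$, whose first index $p(k_1+k_2)=i_1+(i_2-r_1)>i_1$; being strictly higher it vanishes, and unwinding the chain yields $a_{i_1,i_2,0}=0$. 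The delicate point, which I expect to cost the most work, is the middle transfer: the first and last moves are rigid two-term relations, but when $r_1\ge 2$ the residue transfer is governed by the $(r_1+1)$-term equations (\ref{**}) rather than two-term ones, so the passage $a_{i_1,0,i_2}\rightsquigarrow a_{pk_1,r_1,i_2}$ is not a single equality; one must show that, modulo the coefficients already set to zero, the coupled $r=k_2$ blocks collapse to exactly this relation, most plausibly by a downward induction on the $x_1$-degree inside the degree-$k$ component. Once $x^{(i_1,i_2)}$ is shown to be unmarked in $Div[x_1,x_2,y_1]$, the statement extends to $Div[x_1,\dots,x_m,y_1,\dots,y_n]$ by reinstating and freezing the remaining variables, exactly as in Lemma~\ref{11} and Lemma~\ref{connect}.
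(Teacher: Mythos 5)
Three of your four cases are correct and coincide with the paper's own argument. The reduction of $j_1>0$ to Lemma \ref{11} is exactly the paper's first step. Your doubly-divisible case ($p\mid i_1$ and $p\mid i_2$) via two chains of (\ref{sup*}) relations, passing through $a_{i_2,0,i_1}$ and ending at $a_{i_1+i_2,0,0}$, is the paper's second step, and it is precisely the situation where those two-term chains are available. Your case $p\mid i_1$, $p\nmid i_2$ is the contrapositive form of the paper's third step: your ``key equation'' is the paper's first displayed equation (the $r=0$ block at frozen $x_2$-exponent $i_1$), and killing each $a_{i_1,i_2-j,j}$ via the $r=0$ block at frozen exponent $i_2-j$ uses the paper's second displayed equation; the logic is identical.

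The genuine gap is the case you yourself flag, $j_1=0$, $p\mid i_2$, $p\nmid i_1$: the middle transfer $a_{i_1,0,i_2}\rightsquigarrow a_{pk_1,r_1,i_2}$ is a plan, not a proof. It cannot be read off any single defining equation: the (\ref{**}) block at frozen $x_2$-exponent $0$ that contains $a_{i_1,0,i_2}$ ties it only to coefficients with strictly smaller first exponent, and the (\ref{sup*}) chain at frozen exponent $i_1$ merely shuffles mass between the other two slots, so the transfer has to be assembled from blocks at several different frozen $x_2$-exponents. Concretely, for $p=3$ and $(i_1,i_2)=(5,3)$ one needs the four $r=1$ blocks at frozen exponents $0,3,1,4$, namely $a_{5,0,3}+2a_{4,0,4}+a_{3,0,5}=0$, $a_{3,2,3}+2a_{3,1,4}+a_{3,0,5}=0$, $a_{4,1,3}+a_{3,1,4}=0$, $a_{4,1,3}+a_{4,0,4}=0$, which collapse to $a_{5,0,3}=a_{3,2,3}$; only then does the (\ref{sup*}) relation $a_{3,2,3}=-a_{6,2,0}$ produce a higher coefficient. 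Your proposed ``downward induction on the $x_1$-degree'' is exactly what would have to organize such cancellations for general $r_1$, and it is not carried out; this multi-block collapse is the real content of the case (it is the same mechanism the paper later establishes, in a different normalization, as Lemma \ref{blow}). To your credit, you have isolated the genuinely delicate point: the paper's own treatment of this case is the bare assertion $a_{i_1,i_2,0}=(-1)^{k_2}a_{i_1,0,i_2}=a_{i_1+i_2,0,0}$, whose second equality is a (\ref{sup*}) chain only when $p\mid i_1+i_2$, i.e.\ only when $p\mid i_1$; when $p\nmid i_1$ it is true but requires the same multi-block argument you postpone. So your proposal is rigorous exactly where the paper's proof is rigorous, and incomplete exactly where the paper's proof is terse.
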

\begin{proof}
If $j_1>0$, then the statement follows from Lemma \ref{11}. Therefore assume $j_1=0$.

If $r_2=0$, then $k_2>0$ and $a_{i_1,i_2,0}=(-1)^{k_2} a_{i_1,0,i_2}= a_{i_1+i_2,0,0}$ which is higher than $a_{i_1,i_2,0}$, showing that $a_{i_1,i_2,0}$ is unmarked.

If $r_2>0$, then the equation 
\[\sum_{j=0}^{r_2} \binom{r_2}{j} a_{pk_2+r_2-j,i_1,j} =0\]
shows that $a_{i_1, i_2, 0}\neq 0$ implies 
$a_{i_1,pk_2+r_2-j,j}=a_{pk_2+r_2-j,i_1,j}\neq 0$ for some $j>0$. 

The equation 
\[\sum_{i=0}^j \binom{j}{i} a_{pk_1+j-i,pk_2+r_2-j,i} =0\]
shows that in this case $a_{pk_1+j-i,pk_2+r_2-j,i}\neq 0$ for some $i<j$. 
Since $a_{pk_1+j-i,pk_2+r_2-j,i}$ is higher than $a_{i_1, i_2, 0}$, the claim follows.
\end{proof}

\begin{cor}\label{cor2}
Every symmetrized monomial 
$x_1^{(i_1,\ldots,i_m)}y_1^{(j_1,\ldots,j_n)}$
such that $p$ divides $i_r>0$ for some $r>1$, or $p$ divides $i_1$ and $i_2>0$,
is unmarked in $Div[x,y]$.
\end{cor}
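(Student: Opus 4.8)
The plan is to deduce the corollary from Proposition \ref{5.5} by deleting all but three variables and invoking Lemma \ref{connect}. Recall that Lemma \ref{connect} asserts that if a monomial becomes unmarked after deleting some of the $x$- and $y$-variables, then the original monomial is already unmarked. Hence it suffices, in each of the two hypothesized cases, to exhibit a submonomial in two $x$-variables and one $y$-variable that falls under Proposition \ref{5.5}.

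First I would treat the case where $p$ divides $i_r>0$ for some $r>1$. Since the monomial is symmetrized, $i_1\geq i_r>0$, so the pair of indices $(i_1,i_r)$ consists of two positive integers, the second of which is divisible by $p$. I would delete every variable except $x_1$, $x_r$, and $y_1$, obtaining the submonomial $x_1^{(i_1)}x_r^{(i_r)}y_1^{(j_1)}$; after relabeling $x_1,x_r,y_1$ as $\tilde x_1,\tilde x_2,\tilde y_1$ this is a symmetrized monomial in $Div[\tilde x_1,\tilde x_2,\tilde y_1]$ whose second $x$-index is positive and one of whose $x$-indices is divisible by $p$. By Proposition \ref{5.5} it is unmarked, so by Lemma \ref{connect} the original monomial is unmarked in $Div[x,y]$.

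Next I would treat the case where $p$ divides $i_1$ and $i_2>0$. Here symmetrization gives $i_1\geq i_2>0$, and the pair $(i_1,i_2)$ again consists of two positive integers, this time with the first divisible by $p$. Deleting all variables except $x_1$, $x_2$, $y_1$ yields the submonomial $x_1^{(i_1)}x_2^{(i_2)}y_1^{(j_1)}$, which satisfies the hypotheses of Proposition \ref{5.5} (second $x$-index positive, $p$ dividing one of the two $x$-indices). Hence it is unmarked, and Lemma \ref{connect} once more lifts this to the conclusion that $x_1^{(i_1,\ldots,i_m)}y_1^{(j_1,\ldots,j_n)}$ is unmarked in $Div[x,y]$.

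The argument has no serious obstacle; it is essentially a bookkeeping reduction, and the only points requiring attention are minor. First, the retained pair of $x$-indices must already be sorted in decreasing order, which is guaranteed because the given monomial is symmetrized and the retained pair always includes $i_1$. Second, the variable $y_1$ must be kept, since Proposition \ref{5.5} is stated in $Div[x_1,x_2,y_1]$ and its proof, in the subcase $j_1=0$, uses the defining equations involving $y_1$. Both subcases $j_1=0$ and $j_1>0$ are already handled inside Proposition \ref{5.5} (the latter via Lemma \ref{11}), so the index $j_1$ needs no separate treatment.
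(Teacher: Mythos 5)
Your proposal is correct and follows exactly the paper's argument: the paper's proof of this corollary is precisely ``Use Lemmas \ref{5.5} and \ref{connect}'', i.e.\ reduce to a three-variable submonomial covered by Proposition \ref{5.5} and lift unmarkedness via Lemma \ref{connect}. Your write-up simply makes explicit the bookkeeping (which variables to retain in each case and why the retained pair stays symmetrized) that the paper leaves to the reader.
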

\begin{proof}
Use Lemmas \ref{5.5} and \ref{connect}.
\end{proof}

\begin{theorem}
The algebra $A$ of all supersymmetric elements in $Div[x_1, \ldots, x_m,y_1]$ is generated by elements $S(M)$, where $M$ is one of the symmetrized monomials 
\begin{itemize}
\item $\rho_i=x_1\ldots x_i$ for $i=1, \ldots m$;  

\item$\rho_{m+j_1}=\rho_m y_1^{(j_1)}=E_{m+j_1}$ for $j_1>0$ (corresponding to generators from characteristic zero case); 

\item $x_1^{(p^s)}$ for $s>1$;

\item $\tau_{i_1,\ldots, i_m,j_1}=x_1^{(i_1,\ldots,i_m)}y_1^{(j_1)}$, where $i_1+\ldots + i_m$ is divisible by $p$ but none of $i_1, \ldots, i_m$ are divisible by $p$, 
and $j_1\geq 0$. 
\end{itemize}
\end{theorem}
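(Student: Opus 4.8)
The plan is to establish the two inclusions $C\subseteq A$ and $A\subseteq C$, where $C$ denotes the subalgebra generated by the four families of elements $S(M)$ in the statement. The inclusion $C\subseteq A$ is the easy half: each listed monomial $M$ is marked, since $x_1^{(p^s)}$ is marked by Lemma \ref{p}, each $\rho_i$ and $\tau_{i_1,\ldots,i_m,j_1}$ is marked by Proposition \ref{jeden}, and each $\rho_{m+j_1}$ is marked because the supersymmetric element $E_{m+j_1}$ constructed just before Proposition \ref{cond} has it as leading term. Hence every $S(M)$ is a genuine element of $\mathcal S$, and the subalgebra $C$ they generate lies in $A=\mathcal S$.

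For the reverse inclusion I would apply Proposition \ref{generators}: $A$ is generated by the elements $S(M)$ as $M$ ranges over \emph{all} marked symmetrized monomials, so it suffices to prove $S(M)\in C$ for each marked $M$. The first task is to identify the marked monomials precisely. Lemma \ref{p} and Proposition \ref{jeden} already show that every monomial indexing the basis of Lemma \ref{span} is marked, which is one direction. For the converse I would combine Lemma \ref{11}, Proposition \ref{5.5} and Corollary \ref{cor2}, reducing to fewer variables via Lemma \ref{connect}, to rule out as unmarked every symmetrized monomial that carries a part divisible by $p$ outside the leading slot. The goal of this step is to conclude that the marked monomials are exactly the symmetrized monomials indexing the $K$-basis of $C$ produced by Lemma \ref{span}.

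Granting this identification, the proof concludes by a dimension comparison. Because the leading terms of $\{S(M):M\text{ marked of degree }k\}$ are pairwise distinct, these elements form a basis of $A_k$, so $\dim_K A_k$ equals the number of marked monomials of degree $k$; by Lemma \ref{span}, $\dim_K C_k$ equals the number of basis monomials of $C$ of degree $k$, and by the previous paragraph these counts agree in every degree. Together with $C\subseteq A$ this forces $C_k=A_k$ for all $k$, hence $C=A$. The concrete mechanism behind the matching is that each basis monomial is realized as the leading term of a product of generators: a single divided power $x_1^{(k)}$ is obtained from the $x_1^{(p^s)}$ and $\rho_1=x_1$ by reading off the base-$p$ digits of $k$ (for which no $p$-adic carry occurs), while the full-support monomials come from products of the $\rho_i$, the $E_{m+j_1}$ and the $\tau$'s.

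The hard part will be Lemma \ref{span} itself, i.e.\ verifying that the subalgebra generated by these four finite families has precisely the claimed monomial basis. This is delicate because a product in a divided-power algebra collapses to zero as soon as a $p$-adic carry occurs, so one must check simultaneously that the relevant products of generators are nonzero with the predicted leading term \emph{and} that no product yields a leading term outside the marked set. A secondary difficulty is that a few unmarked monomials—such as $x^{(i_1,0,\ldots,0)}y_1^{(j_1)}$ with $j_1>0$ and $p\nmid i_1$—are not eliminated directly by Lemma \ref{11} or Corollary \ref{cor2}; the cleanest way around this is precisely the dimension count above, which pins down $\dim_K A_k$ and thereby certifies unmarkedness indirectly, without a monomial-by-monomial argument.
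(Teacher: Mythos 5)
Your overall route is the paper's own: classify the symmetrized monomials of $Div[x_1,\ldots,x_m,y_1]$ into marked and unmarked (Lemma \ref{p} and Proposition \ref{jeden} for markedness; Lemma \ref{11}, Proposition \ref{5.5}, Corollary \ref{cor2} and Lemma \ref{connect} for unmarkedness), use Lemma \ref{span} to realize each marked monomial as the leading term of a product of the listed generators, and finish with Proposition \ref{generators}. Your per-degree dimension comparison adds nothing beyond this: it is exactly the leading-term induction that already proves Proposition \ref{generators}, repackaged as a count, and it stands or falls with the same input.

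The genuine problem is your final paragraph. The monomials you single out, $x^{(i_1,0,\ldots,0)}y_1^{(j_1)}$ with $j_1>0$ and $p\nmid i_1$, do indeed have to be proved unmarked (they are: already in degree $2$, every supersymmetric element of $Div[x_1,x_2,y_1]$ with nonzero coefficient at $x_1y_1$ must have nonzero coefficient at the lexicographically larger $x_1x_2$), but your proposed mechanism --- that the dimension count ``certifies unmarkedness indirectly'' --- is circular. The equality $\dim_K A_k=\dim_K C_k$ is available only \emph{after} one knows that the marked monomials of degree $k$ are exactly the basis monomials of Lemma \ref{span} of that degree; if one of your leftover monomials were marked, $\dim_K A_k$ would strictly exceed that count and the comparison would simply fail, so the comparison cannot be what rules this out. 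The repair is the one the paper itself supplies: Lemma \ref{11} covers these monomials, because ``$p$ divides some of $i_1,\ldots,i_m$'' includes exponents equal to $0$, and the paper states this special case explicitly right after the lemma ($j_1>0$ and $i_m=0$ forces unmarkedness). With that citation in place of the dimension-count shortcut, your argument closes and coincides with the paper's proof. Two side remarks: your instinct that these monomials need separate treatment is sound, since the theorem's own attribution of all unmarkedness to Corollary \ref{cor2} alone is too coarse for exactly this family; and both you and the paper inherit the statement's restriction $s>1$ in the third bullet, under which the marked monomial $x_1^{(p)}$ required for the basis of Lemma \ref{span} could never arise as a leading term --- this is evidently a typo for $s>0$, as in the parallel theorems of Sections 5 and 6.
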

\begin{proof}
Lemma \ref{p} shows that every monomial $x_1^{(k)}$, where $k$ is divisible by $p$ is marked. Proposition \ref{jeden} shows that every symmetrized 
monomial $x^{(i_1,\ldots, i_m)}y_1^{(j_1)}$, where $p$ does not divide any positive $i_1, i_2, \ldots, i_m$ and $j_1\geq 0$, is also marked. 
On the other hand, Corollary \ref{cor2} shows that all remaining monomials are unmarked. 
Since all monomials $M$ from the text of the theorem are marked,
Lemma \ref{span} shows that each marked monomial is the leading term of some element of the algebra $A$. 
Proposition \ref{generators} concludes the proof.
\end{proof}

The following lemma is needed later.

\begin{lm}\label{blow}
Assume that $f=\sum_{(i_1,i_2,j_1)} a_{(i_1,i_2,j_1)}x^{(i_1,i_2)}y^{(j_1)}$ is a supersymmetric element in $Div[x_1,x_2,y_1]$.
If $p$ divides $j_1$ and $i_2$, and $i_1\geq p$, then 
$a_{i_1,i_2,j_1}=a_{i_1-p,i_2+p,j_1}=\ldots=a_{s_1,i_1+i_2-s_1,j_1}$.
\end{lm}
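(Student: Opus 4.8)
The plan is to establish the single shift $a_{i_1,i_2,j_1}=a_{i_1-p,i_2+p,j_1}$ under the hypotheses $p\mid i_2$, $p\mid j_1$, $i_1\ge p$, and then to iterate it: since $i_2+p$ is again divisible by $p$ while the first exponent drops by $p$ at each step, repeating the shift produces the whole chain, which terminates once the first exponent falls below $p$ (that terminal value is $i_1\bmod p$). The only tools needed are the defining equations of Proposition \ref{cond}, read off for the two frozen values $i_2$ and $i_2+p$ of the second $x$-exponent, together with the symmetry relation (\ref{sym}). Throughout I write $i_1=pk_1+r_1$ with $0\le r_1<p$.

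The transparent case is $p\mid i_1$. Freezing the second exponent at $i_2$ and using the sign-flip relation (\ref{sup*}) with $t=i_1+j_1\equiv 0\pmod p$ and $r=j_1/p$ gives $a_{i_1,i_2,j_1}+a_{i_1-p,i_2,j_1+p}=0$. On the other hand, writing $a_{i_1-p,i_2+p,j_1}=a_{i_2+p,i_1-p,j_1}$ by (\ref{sym}) and applying (\ref{sup*}) with frozen exponent $i_1-p$, $t=(i_2+p)+j_1\equiv 0\pmod p$ and $r=j_1/p$, gives $a_{i_2+p,i_1-p,j_1}+a_{i_2,i_1-p,j_1+p}=0$, where $a_{i_2,i_1-p,j_1+p}=a_{i_1-p,i_2,j_1+p}$ again by (\ref{sym}). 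Thus both $a_{i_1,i_2,j_1}$ and $a_{i_1-p,i_2+p,j_1}$ equal $-a_{i_1-p,i_2,j_1+p}$, and the shift follows. This is exactly where $p\mid i_2$ and $p\mid j_1$ enter: they make both degrees $i_1+j_1$ and $(i_2+p)+j_1$ divisible by $p$, so the short sign-flip equations (\ref{sup*}) apply rather than the longer equations (\ref{**}).

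For general $i_1$ I would run an induction on the first exponent. When $p\nmid i_1$ the two sign-flip relations are replaced by the length-$(r_1+1)$ relations (\ref{**}) with $s=r_1$. Freezing the second exponent at $i_2$ and choosing the block $r=j_1/p$ yields $\sum_{j=0}^{r_1}\binom{r_1}{j}a_{i_1-j,i_2,j_1+j}=0$, and freezing it at $i_2+p$ yields $\sum_{j=0}^{r_1}\binom{r_1}{j}a_{i_1-p-j,i_2+p,j_1+j}=0$. Subtracting, the $j=0$ terms produce the desired difference $a_{i_1,i_2,j_1}-a_{i_1-p,i_2+p,j_1}$, while for each $j\ge1$ the bracket $a_{i_1-j,i_2,j_1+j}-a_{i_1-p-j,i_2+p,j_1+j}$ is again a shift difference, now at the strictly smaller first exponent $i_1-j$; by the induction hypothesis every such bracket vanishes, and the shift at $i_1$ follows.

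The main obstacle is that these auxiliary brackets carry the $y$-exponent $j_1+j$, which is no longer divisible by $p$, so the induction must be carried for an \emph{arbitrary} $y$-exponent with $p\mid i_2$ kept fixed. For such a coefficient the relevant entry is no longer the leading term of its block in (\ref{**}), and a naive subtraction reintroduces coefficients of larger first exponent, so the reduction is not immediate. The device I would use to close this gap is the $x_1\leftrightarrow y_1$ symmetry of supersymmetric elements: interchanging $x_1$ and $y_1$ only changes the sign of the factor $x_1-y_1$ in (\ref{star}), hence sends a supersymmetric element to a supersymmetric one while exchanging the roles of the exponents $i_1$ and $j_1$. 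Combining this symmetry, which trades the residue of $i_1$ against that of $j_1$, with the block relations of Proposition \ref{Kbasis}, I expect every arbitrary-$y$ instance to reduce to instances already treated, the remaining work being the bookkeeping of $p$-adic carries in the block equations.
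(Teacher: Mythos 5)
Your case $p\mid i_1$ is correct: the two applications of (\ref{sup*}) combined with (\ref{sym}) do give $a_{i_1,i_2,j_1}=-a_{i_1-p,i_2,j_1+p}=a_{i_1-p,i_2+p,j_1}$, and iterating the single shift is then legitimate. The gap is the case $p\nmid i_1$, and it is not mere bookkeeping; both ingredients of your proposed repair fail. First, the $x_1\leftrightarrow y_1$ swap is not available in $Div[x_1,x_2,y_1]$: a supersymmetric element must be symmetric in $x_1,x_2$, and after the swap the element is instead symmetric in the pair $x_2,y_1$, so it is no longer supersymmetric. Such an involution exists only when $m=n$ (for instance, $x_1+x_2-y_1$ is supersymmetric, while its swap $y_1+x_2-x_1$ is not even symmetric in $x_1,x_2$). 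Second, the generalized single shift your induction would carry, namely $a_{u,i_2,v}=a_{u-p,i_2+p,v}$ for $p\mid i_2$, $u\ge p$ and \emph{arbitrary} $y$-exponent $v$, is false. For $p=3$ the element
\[f=-x_1^{(5)}x_2-x_1x_2^{(5)}+x_1^{(5)}y_1+x_2^{(5)}y_1+x_1^{(3)}x_2y_1^{(2)}+x_1x_2^{(3)}y_1^{(2)}\]
is supersymmetric, since
\[(\frac{d}{d x_1}+\frac{d}{d y_1})f=(x_1-y_1)\bigl(2x_1^{(4)}-x_1^{(3)}x_2+x_2^{(3)}y_1+2x_1x_2y_1^{(2)}\bigr),\]
and yet $a_{4,0,2}=0\neq 1=a_{1,3,2}$.

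One might hope to salvage the induction by restricting to the brackets that actually occur in your subtraction (those have pair sum $i_1+j_1\equiv r_1\not\equiv 0\pmod p$, unlike the counterexample above), but even that restricted claim cannot be reached by induction on the first exponent, because the dependence is circular rather than descending. Take $p=3$, $i_1=7$, $i_2=0$, $j_1=0$: your two block equations are $a_{7,0,0}+a_{6,0,1}=0$ and $a_{4,3,0}+a_{3,3,1}=0$, so the vanishing of the single bracket $a_{6,0,1}-a_{3,3,1}$ is \emph{equivalent} to the equality $a_{7,0,0}=a_{4,3,0}$ you are trying to prove; and since $a_{6,0,1}$ has first exponent divisible by $p$, it is the last entry of its block in (\ref{**}), so any expansion of it reintroduces coefficients with larger first exponent---exactly the obstruction you identified, and it does not go away. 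What is missing is the mechanism the paper uses instead: (i) reduce, via the sign-flip relations (\ref{sup*}) and (\ref{sym}), to the case of middle exponent zero, so that it suffices to prove $a_{i_1,0,N}=a_{r_1,i_1-r_1,N}$ with $p\mid N$; and (ii) prove that identity by taking not two but $2r_1$ block equations, with frozen middle exponents $0,1,\ldots,r_1-1$ and $i_1-r_1,\ldots,i_1-1$, expanding the two extreme equations through all the inner ones, and matching the resulting sums term by term using (\ref{sym}) and the identity $\binom{r_1-l}{i}\binom{r_1}{r_1-l}=\binom{l+i}{i}\binom{r_1}{l+i}$. Your two equations are only the outermost pair of that system, and no induction on the first exponent can substitute for the inner ones.
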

\begin{proof}
It is enough to show $a_{i_1,0,i_2+j_1}=a_{s_1,i_1-s_1,i_2+j_1}$.

We consider the following system of equations:
\begin{equation}\tag{$e_{s_1}$}
\sum_{i=0}^{s_1} \binom{s_1}{i} a_{i_1-i,0,i_2+j_1+i}=0
\end{equation}
\begin{equation}\tag{$e_{s_1-1}$}
\sum_{i=0}^{s_1-1} \binom{s_1-1}{i} a_{i_1-1-i,1,i_2+j_1+i}=0
\end{equation}
\[\ldots\]
\begin{equation}\tag{$e_1$}
\sum_{i=0}^1 a_{i_1-s_1+1-i,s_1-1,i_2+j_1+i}=0
\end{equation}
\begin{equation}\tag{$e'_{s_1}$}
\sum_{i=0}^{s_1} \binom{s_1}{i} a_{s_1-i,i_1-s_1,i_2+j_1+i}=0
\end{equation}
\begin{equation}\tag{$e'_{s_1-1}$}
\sum_{i=0}^{s_1-1} \binom{s_1-1}{i} a_{s_1-1-i,i_1-s_1+1, i_2+j_1+i}=0
\end{equation}
\[\ldots\]
\begin{equation}\tag{$e'_1$}
\sum_{i=0}^1 a_{1-i,i_1-1,i_2+j_1+i}=0
\end{equation}

When we expand the sum $\sum_{i=1}^{s_1-1} \binom{s_1}{i} a_{i_1-i,0,i_2+j_1+i}$ using equations $(e'_{s_1-1})$ through $(e'_1)$
we observe that the term $a_{s_1-l-i,i_1-s_1+l, i_2+j_1+i}$, where $1\leq l\leq s_1-1$ and $0\leq i<s_1-l$ appears with the coefficient $-\binom{s_1-l}{i}\binom{s_1}{s_1-l}$.

When we expand the sum $\sum_{i=1}^{s_1-1} \binom{s_1}{i} a_{s_1-i,i_1-s_1,i_2+j_1+i}$ using equations $(e_{s_1-1})$ through $(e_1)$
we observe that the term $a_{i_1-s_1+l,s_1-l-i,i_2+j_1+i}$, where $1\leq l\leq s_1-1$ and $0\leq i<s_1-l$ appears with the coefficient $-\binom{l+i}{i}\binom{s_1}{l+i}$.

Since $\binom{s_1-l}{i}\binom{s_1}{s_1-l}=\frac{s_1!}{i!(s_1-l-i)!l!}=\binom{l+i}{i}\binom{s_1}{l+i}$, we obtain 
\[\sum_{i=1}^{s_1-1} \binom{s_1}{i} a_{i_1-i,0,i_2+j_1+i}= \sum_{i=1}^{s_1-1} \binom{s_1}{i} a_{s_1-i,i_1-s_1,i_2+j_1+i}.\]
This together with 
\[\sum_{i=0}^{s_1} \binom{s_1}{i} a_{i_1-i,0,i_2+j_1+i}= \sum_{i=0}^{s_1} \binom{s_1}{i} a_{s_1-i,i_1-s_1,i_2+j_1+i}=0,\]
implies $a_{i_1,0,i_2+j_1}=a_{s_1,i_1-s_1,i_2+j_1}$.
\end{proof}

\begin{rem}
It is not true that for each supersymmetric element \linebreak $f=\sum_{(i_1,i_2,j_1)} a_{(i_1,i_2,j_1)}x^{(i_1,i_2)}y^{(j_1)}$ %in $Div[x_1,x_2,y_1]$
the conditions $i_1\equiv u_1 \pmod p$, $i_2\equiv u_2 \pmod p$ imply $a_{i_1,i_2,0}= a_{u_1,u_2,0}$.
As a counterexample, there is a supersymmetric element $f$ for which $a_{7,1,0}\neq a_{4,4,0}$ and $p=3$.
\end{rem}

\section{Supersymmetric elements in $Div[x_1,y_1,y_2]$}

Throughout this section, we assume that $n>1$. Let us introduce the concept of the height of $n$-tuple $(y_1, \ldots, y_n)$ in a form suitable to a more general setting
of $Div[x,y]$. 

\begin{df}
Let $j_1\geq \ldots \geq j_n\geq 0$. If 
\begin{itemize}
\item $j_1\leq p-1$ and for every $1\leq t \leq n$ such that $s_t\neq p-1$ we have $j_{t+1}=0$ and $s_u=p-1$ for $1\leq u<t$, or 
\item $j_1>p-1$ and $s_1=\ldots = s_{n-1}=p-1$,
\end{itemize}
then we say that $(j_1,\ldots, j_n)$ has the height $h=1$; otherwise we say that it has the height $h>1$.
\end{df}

For $Div[x_1,y_1,y_2]$ we have $n=2$ and a simpler description - $(j_1,j_2)$ is of the height one if and only if either $0<j_1<p-1$ and $j_2=0$, or $s_1=p-1$.

\begin{lm}\label{span2}
Denote by $C$ the subalgebra of $Div[x_1,y_1,y_2]$ generated by symmetrized monomials 
\begin{itemize}
\item{$x_1^{(p^s)}$ for $s>0$;}
\item{$x_1y^{(j_1,j_2)}$ for  $(j_1,j_2)$ of height $h=1$;}
\item{$x_1^{(2)}y^{(j_1,j_2)}$ for $(j_1,j_2)$ of height $h>1$.}
\end{itemize}

Then $C$ has a $K$-basis consisting of symmetrized monomials
\begin{itemize}
\item $x_1^{(i_1)}$, where $r_1=0$;
\item $x_1^{(i_1)}y^{(j_1,j_2)}$, where $r_1\geq 1$ and $(j_1,j_2)$ has height $h=1$;
\item $x_1^{(i_1)}y^{(j_1,j_2)}$, where $r_1\geq 2$ and $(j_1,j_2)$ has height $h>1$.
\end{itemize}
\end{lm}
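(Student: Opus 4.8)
The plan is to prove the two assertions separately: that the indicated symmetrized monomials are linearly independent, and that they span $C$. I treat $C$ as a subalgebra of the commutative divided powers algebra, whose multiplication is governed entirely by the $p$-adic carry rule $z^{(a)}z^{(b)}=\binom{a+b}{a}z^{(a+b)}$. Linear independence is immediate, since distinct symmetrized monomials have distinct leading terms in the lexicographic order, so the only real content is the equality of spans. As in Proposition \ref{pr1} and in the analogous one-variable statement Lemma \ref{span}, I would organize everything around leading terms: a product of the generators is, up to a nonzero scalar or $0$, a single element $x_1^{(i_1)}$ times a product of orbit sums in $y_1,y_2$, so its leading term is the carry-free sum of the leading data of the factors, with $x_1$-degree $(\#\,x_1y\text{-factors})+2(\#\,x_1^{(2)}y\text{-factors})+\sum_s c_sp^s$.

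First I would show that every listed symmetrized monomial lies in $C$, by induction on degree and, within a degree, by downward induction on the lexicographic order of the leading term. The purely even monomials $x_1^{(i_1)}$ with $r_1=0$ are handled exactly as in Proposition \ref{pr1}: writing $i_1=\sum_{s\ge1}c_sp^s$ in base $p$ (with vanishing units digit), the product $\prod_s (x_1^{(p^s)})^{c_s}$ has no carries and realizes $x_1^{(i_1)}$. For a monomial $x_1^{(i_1)}y^{(j_1,j_2)}$ of height $1$ I would start from a product of generators $x_1y^{(*)}$ whose $y$-tuples add up carry-free to $(j_1,j_2)$, adjusting the residue $r_1$ by inserting factors $x_1^{(2)}y^{(0,0)}$ and by refining the decomposition of $(j_1,j_2)$ into smaller height-$1$ tuples; the resulting product has the correct leading term together with lower terms, and the latter already lie in $C$ by the induction hypothesis and can be subtracted off. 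Height $>1$ monomials are produced the same way using the generators $x_1^{(2)}y^{(*)}$. The explicit cancellation here is of the shape $x_1^{(2)}m_{(2,0)}=\tfrac14(x_1m_{(1,0)})^2-x_1^{(2)}m_{(1,1)}$.

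Conversely, to see that $C$ contains nothing beyond the span of the listed monomials, I would verify that this span is closed under multiplication, i.e. that the leading term of any product of generators is again of one of the three listed types, and that by induction the lower terms are as well. This reduces to a purely combinatorial claim about $Div[y_1,y_2]$: the leading term of a product of orbit sums is the sorted, carry-free sum of the factor tuples, so one must check that the height of such a sum, together with the number of height-$1$ versus height-$>1$ factors (which constrains $r_1$ modulo $p$), always lands in an allowed $(r_1,\text{height})$ pattern. This is the delicate step.

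The main obstacle I anticipate is precisely this carry-and-height bookkeeping. Unlike the even case of Proposition \ref{pr1}, the admissible residue $r_1$ is coupled to the fine structure of the height-$1$ decomposition of $(j_1,j_2)$ and to the $p$-adic carries incurred when the $x_1$-degrees $1$, $2$ and the powers $p^s$ are combined, so the induction must be set up carefully at the carry boundaries, where the naive top term vanishes and a lower orbit sum becomes the new leading term. Once the behaviour of heights under carry-free addition in $Div[y_1,y_2]$ is pinned down — the content implicitly packaged in the definition of height — the remainder is the same triangular reduction used in Lemma \ref{span} and Proposition \ref{generators}.
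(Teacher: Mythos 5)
Your framework is the right one, and in fact the only viable one: the generators must be read as the orbit sums $x_1^{(\epsilon)}g_{j_1,j_2}$ (in the notation of Lemma \ref{6.2}) rather than as literal monomials, and a triangular leading-term induction is exactly what is needed to feed Lemma \ref{span2} into Proposition \ref{generators}. The gap is in the leading-term calculus you build on top of this. Your guiding rule --- that the leading term of a product of generators is the \emph{carry-free} sum of the sorted leading data of the factors, so that a listed monomial is reached by decomposing $(j_1,j_2)$ carry-free into height-one tuples and tuning $r_1$ with factors $x_1^{(2)}y^{(0,0)}$ --- is false, and it fails precisely at the cases that carry the content of the lemma. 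Consider the basis element $x_1^{(2)}y^{(p-1,1)}$ (for $p=3$ this is $x_1^{(2)}y_1^{(2)}y_2$): it is listed, since $r_1=2\geq 1$ and $s_1=p-1$ gives height $h=1$. No carry-free recipe reaches it. To get $x_1$-exponent $2$ without a carry one needs either two height-one factors or one height-$(h>1)$ factor. In the first case one factor must have second $y$-entry $1$, and a height-one tuple with nonzero second entry has $s_1=p-1$, hence first entry $\geq p-1$; the other factor is a nonzero height-one tuple, hence has first entry $\geq 1$; so the first coordinates sum to at least $p$, not $p-1$. In the second case the single factor would have to be $x_1^{(2)}y^{(p-1,1)}$, which is not a generator because $(p-1,1)$ has height one. (This also shows that under the literal-monomial reading of the generators the lemma is simply false.) The element does lie in $C$, but only through a product whose top terms cancel: $x_1g_{p-1,0}\cdot x_1g_{1,0}=2x_1^{(2)}g_{p-1,1}$, where the product of the two leading monomials vanishes because $y_1^{(p-1)}y_1=p\,y_1^{(p)}=0$, and the leading term survives from the cross term $y_1^{(p-1)}\cdot y_2$.

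Consequently the correct rule is: the leading term of a product of orbit sums is the largest sorted tuple expressible as a carry-free sum of \emph{conjugates} of the factors' tuples (the product being zero if no such expression exists), and both of your directions --- realizing every listed monomial in $C$, and showing that leading terms of $C$ never leave the list --- have to be run with this rule, not with carry-free addition of the sorted tuples themselves. You do flag this as ``the delicate step'' and ``the main obstacle,'' but you defer it rather than resolve it, and you even frame what must be pinned down as ``the behaviour of heights under carry-free addition,'' which the example above shows is not the right invariant. Since the paper itself gives no proof of Lemma \ref{span2} (it is left to the reader), this carry-and-conjugate bookkeeping is not a detail one can quote from elsewhere: it is the entire content of the statement, and the proposal as written does not contain it.
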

\begin{proof}
The proof is left to the reader.
\end{proof}

The next statement is formulated for a more general case $Div[x_1, y_1, \ldots, y_n]$ instead of $Div[x_1,y_1,y_2]$.

\begin{lm}\label{6.2}
If $(j_1,\ldots, j_n)$ has the height $h=1$, then every symmetrized monomial 
$x_1y^{(j_1,\ldots, j_n)}$ is marked in $Div[x_1,y_1,\ldots, y_n]$.
\end{lm}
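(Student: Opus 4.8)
The plan is to exhibit an explicit supersymmetric element of $Div[x_1,y_1,\ldots,y_n]$ whose leading term is exactly $x_1y^{(j_1,\ldots,j_n)}$; by the definition of \emph{marked} this suffices. The first reduction is to observe that, because $x_1y^{(j_1,\ldots,j_n)}$ is required to be the leading term and a single $x_1$ carries exponent $1<p$, the sought element $f$ cannot contain any monomial with $x_1$-exponent $\geq 2$ (such a monomial would be lexicographically larger). Hence $f$ must have the shape $f=x_1g+h$, where $g,h\in Div[y_1,\ldots,y_n]$ are symmetric in $y_1,\ldots,y_n$ and $g$ has leading term $y^{(j_1,\ldots,j_n)}$. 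A short computation with $\frac{d}{dx_1}$ and $\frac{d}{dy_1}$ (as in the proof of Proposition \ref{cond}, specialized to $i_1\leq 1$) shows that the divisibility condition $(\ref{star})$ for such an $f$ is equivalent to the single requirement that $\frac{d}{dy_1}(h+y_1g)=0$, i.e.\ that $h+y_1g$ contains no $y_1$; the accompanying $f'$ is then $\frac{d}{dy_1}g$. Writing $f=(x_1-y_1)g+\phi$ with $\phi\in Div[y_2,\ldots,y_n]$, this reduces the whole statement to producing a symmetric $g$ with leading term $y^{(j_1,\ldots,j_n)}$ together with a $y_1$-free $\phi$ for which $f$ is symmetric in all of $y_1,\ldots,y_n$; the latter symmetry unwinds to the cocycle-type conditions $(y_1-y_c)g=\phi-s_{1c}(\phi)$ for each $c=2,\ldots,n$, where $s_{1c}$ transposes $y_1$ and $y_c$.

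The construction of $g$ and $\phi$ is the dual, under the exchange of the roles of $x$ and $y$, of the element built in Proposition \ref{jeden}. There, a single distinguished variable $y_1$ absorbed the $p$-residues of the $x$-variables; here the single variable $x_1$ (residue $1$) is moved into the $y$-slots, producing a sum over the slots that can absorb it, weighted by multinomial coefficients with alternating signs. The height-one hypothesis is exactly the dual of the hypothesis ``none of $i_1,\ldots,i_m$ is divisible by $p$'': it prescribes that the residues $s_u$ of the consecutive $y$-slots equal $p-1$ up to the point where the tuple terminates, which is precisely the condition guaranteeing that the divided-power products occurring in $g$ and $\phi$ do not vanish (no $p$-adic carry destroys the intended monomials) and that $y^{(j_1,\ldots,j_n)}$ genuinely survives as the leading term. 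I would write $g$ and $\phi$ as explicit alternating multinomial-weighted sums over the admissible insertions of the $x_1$-unit into the $y$-part, split according to the two clauses of the height-one definition (the case $j_1\leq p-1$, a greedy string of $(p-1)$'s followed by one smaller entry and then zeros, and the case $j_1>p-1$ with $s_1=\cdots=s_{n-1}=p-1$).

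The verification then proceeds as in Proposition \ref{jeden}: one checks that every defining equation of type $(\ref{**})$ or $(\ref{sup*})$ involving a variable occurring in $g$ or $\phi$ involves exactly two such variables, so that the defining linear system collapses to a chain of two-term relations whose coefficients are the multinomial coefficients above; the alternating signs make these relations telescope, and the height-one residue conditions ensure every coefficient in the chain is a unit modulo $p$, so the prescribed values form a genuine solution. The main obstacle I anticipate is organizing the symmetric bookkeeping: unlike Proposition \ref{jeden}, where the absorbing variable $y_1$ was unique, here the unit from $x_1$ may be inserted into several symmetric $y$-slots, so $\Sigma_n$ acts nontrivially on the terms being manipulated, and one must check that the cocycle conditions $(y_1-y_c)g=\phi-s_{1c}(\phi)$ hold simultaneously for all $c$. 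Confirming this compatibility, and that it forces no unintended higher monomial to acquire a nonzero coefficient, is the technical heart of the argument.
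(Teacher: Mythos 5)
Your opening reductions are sound: every monomial of the sought element must have $x_1$-exponent at most $1$; the kernel of $\frac{d}{dy_1}$ on $Div[y_1,\ldots,y_n]$ is exactly $Div[y_2,\ldots,y_n]$ (no analogue of $y_1^p$ survives for divided powers); and writing $f=(x_1-y_1)g+\phi$ with $g$ symmetric and $\phi\in Div[y_2,\ldots,y_n]$ turns the problem into the conditions $(y_1-y_c)g=\phi-s_{1c}(\phi)$ (only this sufficiency direction of your claimed equivalence is actually needed, since markedness asks for existence). But past this point you have a plan, not a proof: $g$ and $\phi$ are never constructed, and you yourself defer ``the technical heart of the argument'' --- the compatibility check and the leading-term check --- which is precisely the content of Lemma \ref{6.2}. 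For comparison, the paper splits on the two height-one clauses: the first clause costs one line, because there $x_1y^{(j_1,\ldots,j_n)}$ is the leading term of $E_{1+j_1+\cdots+j_n}$ (a case your sketch would still have to treat by hand); for the second clause ($s_1=\cdots=s_{n-1}=p-1$) it writes the element down explicitly and verifies that, after all other coefficients are set to zero, each defining equation contains exactly two of the chosen coefficients, so the resulting two-term chains can be solved.

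Moreover, the one concrete recipe you give for the construction --- dualize Proposition \ref{jeden} by exchanging $x$ and $y$ --- fails on two counts. First, the hypotheses do not correspond: height one is not the dual of ``no positive exponent divisible by $p$.'' For $n=2$ the tuple $(2p-1,p)$ has height $1$ although $p\mid j_2$, while $(p+1,1)$ has height $h>1$ although neither entry is divisible by $p$. Second, and decisively, markedness is tied to the lexicographic order that compares $x$-exponents first, and this order is destroyed by the exchange: the dual of the element of Proposition \ref{jeden} moves $y$-residues into $x_1$, creating terms $x_1^{(1+s_{m_1}+\cdots+s_{m_v})}y^{(\cdots)}$ with $x_1$-exponent exceeding $1$. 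Concretely, for $s_1=\cdots=s_{n-1}=p-1$ and $0<s_n<p-1$ the dual element is $x_1g_{j_1,\ldots,j_n}-(s_n+1)\,x_1^{(1+s_n)}g_{j_1,\ldots,j_{n-1},j_n-s_n}$, whose leading term is $x_1^{(1+s_n)}y^{(j_1,\ldots,j_{n-1},j_n-s_n)}$, not $x_1y^{(j_1,\ldots,j_n)}$ --- exactly the kind of monomial your own first reduction rules out. The element you actually need goes the opposite way, and it is not made of ``insertions of the $x_1$-unit'' alone: once the first defining equation forces the coefficient of $g_{j_1,\ldots,j_{n-1},j_n+1}$ to be $-(s_n+1)\neq 0$, further defining equations force nonzero coefficients $(-1)^j\binom{s_n+j}{j}$ on the transfer terms $x_1g_{j_n+j,\,j_t-j,\,j_1,\ldots,\widehat{j_t},\ldots,j_{n-1}}$ (moving $j\le p-1-s_n$ units from the $t$-th $y$-slot to the $n$-th) and coefficients $(-1)^{j+1}(s_n+j+1)\binom{s_n+j}{j}$ on $g_{j_t-j,\,j_n+j+1,\,j_1,\ldots,\widehat{j_t},\ldots,j_{n-1}}$. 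These $y$-to-$y$ transfer terms are entirely absent from your description, so the cocycle identities you need cannot hold for the element you describe.
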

\begin{proof}
If $j_1\leq p-1$ and for every $1\leq t \leq n$ such that $s_t\neq p-1$ we have $j_{t+1}=0$ and $s_u=p-1$ for $1\leq u<t$, then 
$x_1y_1^{(j_1,\ldots, j_n)}$ is marked since it is the leading term of $E_{1+j_1+\ldots+j_n}$.

Therefore we can assume $s_1=\ldots=s_{n-1}=p-1$. If $s_n=p-1$, then $a_{1,j_1,\ldots, j_n}$ does not appear in any defining equation and is therefore marked.
If $s_n\neq p-1$, then we have a series of $p-1-s_n$ pairs of equations for each choice of $t=1, \ldots, n-1$:
\[a_{s_2+1,j_n-s_n,j_1, \ldots, j_{n-1}} + \ldots + (s_n+1) a_{1,j_n,j_1,\ldots, j_{n-1}} + a_{0,j_n+1,j_1, \ldots, j_{n-1}}=0\]
\[a_{p-1,j_t-p+1,j_n+1,j_1, \ldots, \widehat{j_t}, \ldots, j_{n-1}} + \ldots + (p-1) a_{1,j_t-1,j_n+1, j_1, \ldots, \widehat{y_t}, \ldots, j_{n-1}} + 
a_{0,j_t,j_n+1,j_1, \ldots, \widehat{j_t}, \ldots, j_{n-1}}=0,\]
\[a_{s_n+2,j_n-s_n,j_t-1,j_1, \ldots, \widehat{j_t}, \ldots, j_{n-1} } + \ldots + (s_n+2) a_{1,j_n+1,j_t-1,j_1, \ldots, \widehat{j_t}, \ldots, j_{n-1}} 
+ a_{0,j_n+2,j_t-1,j_1, \ldots, \widehat{j_t}, \ldots, j_{n-1}}=0\]
\[a_{p-2,j_t-p+1,j_n+2,j_1, \ldots, \widehat{j_t}, \ldots, j_{n-1}} + \ldots + (p-2) a_{1,j_t-2,j_n+2,j_1, \ldots, \widehat{j_t}, \ldots, j_{n-1}} 
+ a_{0,j_t-1,j_n+2,j_1, \ldots, \widehat{j_t}, \ldots, j_{n-1}}=0,\]
\[\ldots\]
\[\ldots\]
\[\begin{aligned}
&a_{p-1,j_n-s_n,j_t-p+s_n+2,j_1, \ldots, \widehat{j_t}, \ldots, j_{n-1}} + \ldots + (p-1) a_{1,j_n+p-s_n-2,j_t-p+s_n+2,j_1, \ldots, \widehat{j_t}, \ldots, j_{n-1}} \\
&+ a_{0,j_n+p-s_n-1,j_t-p+s_n+2,j_1, \ldots, \widehat{j_t}, \ldots, j_{n-1}}=0
\end{aligned}\]
\[\begin{aligned}
&a_{s_n+1,j_t-p+1,j_n+p-s_n-1,j_1, \ldots, \widehat{j_t}, \ldots, j_{n-1}} + \ldots + (s_n+1) a_{1,j_t-p+s_n+1,j_n+p-s_n-1,j_1, \ldots, \widehat{j_t}, \ldots, j_{n-1}} \\
&+ a_{0,j_t-p+s_n+2,j_n+p-s_n-1,j_1, \ldots, \widehat{j_t}, \ldots, j_{n-1}}=0.
\end{aligned}\]
These are the only defining equations involving variables $a_{1,j_n,j_1, \ldots, j_{n-1}}$, $a_{0,j_n+1, j_1, \ldots, j_{n-1}}$,
\[a_{1,j_n+1,j_t-1, j_1, \ldots, \widehat{j_t}, \ldots, j_{n-1}}, \ldots, a_{1,j_n+p-s_n-1,j_t-p+s_n+1,j_1, \ldots, \widehat{j_t}, \ldots, j_{n-1}}\] and 
\[a_{0,j_t-1, j_n+2,j_1, \ldots, \widehat{j_t}, \ldots, j_{n-1}}, \ldots, a_{0,j_t-p+s_n+2,j_n+p-s_n-1,j_1, \ldots, \widehat{j_t}, \ldots, j_{n-1}}\]
for all $t=1, \ldots, n-1$.
When we set all variables that are not listed above to zero and $a_{1,j_1,j_2,\ldots, j_n}=1$, the values of the remaining variables are
\[a_{0,j_n+1, j_1, \ldots, j_{n-1}}=-(s_n+1),\]
\[a_{1,j_n+j,j_t-j,j_1, \ldots, \widehat{j_t}, \ldots, j_{n-1}}=(-1)^j\binom{s_n+j}{j}\]
for $j=1, \ldots, p-1-s_n$ and 
\[a_{0,j_t-j,j_n+j+1,j_1, \ldots, \widehat{j_t}, \ldots, j_{n-1}}=(-1)^{j+1}(s_n+j+1)\binom{s_n+j}{j}\] 
for $j=1, \ldots, p-2-s_n$.

Denote by $g_{j_1,\ldots, j_n}$ the symmetric function analogous to $f_{i_1, \ldots, i_m}$, defined on $y_1, \ldots, y_n$ instead of $x_1,\ldots, x_m$.

Then
\[\begin{aligned}
&x_1 g_{j_1, \ldots, j_n}-(s_n+1)g_{j_1,j_2+1,j_3,\ldots, j_n}+
\sum_{t=1}^{n-1}\sum_{j=1}^{p-1-s_n} (-1)^j\binom{s_n+j}{j} x_1 g_{j_n+j,j_t-j,j_1, \ldots, \widehat{j_t}, \ldots, j_{n-1}} \\
&+ \sum_{t=1}^{n-1}\sum_{j=1}^{p-2-s_n} (-1)^{j+1}(s_n+j+1)\binom{s_n+j}{j} g_{j_t-j,j_n+j+1,j_1, \ldots, \widehat{j_t}, \ldots, j_{n-1}},
\end{aligned}\]
is a supersymmetric element that has the leading term $x_1y^{(j_1,\ldots, j_n)}$, showing that $a_{1,j_1,\ldots, j_n}$ is marked.
\end{proof}

For the next three lemmas, we work inside $Div[x_1,y_1,y_2]$.

\begin{lm}\label{6.3}
Every symmetrized monomial 
$x_1^{(2)}y^{(j_1,j_2)}$ is marked in $Div[x_1,y_1,y_2]$.
\end{lm}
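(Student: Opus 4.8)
The plan is to exhibit an explicit supersymmetric element of $Div[x_1,y_1,y_2]$ whose leading term is $x_1^{(2)}y^{(j_1,j_2)}$, thereby showing this symmetrized monomial is marked. Writing $j_1=pl_1+s_1$ and $j_2=pl_2+s_2$ with $0\le s_1,s_2<p$, I would search for $f$ in the form $f=x_1^{(2)}g_{j_1,j_2}+x_1\,B+C$, where $B,C\in Div[y_1,y_2]$ are symmetric and $g_{j_1,j_2}$ is the symmetric function (in the $y$-variables, in the notation introduced in the proof of Lemma \ref{6.2}) with leading term $y^{(j_1,j_2)}$. Since the $x_1^{(2)}$-component already carries the highest power of $x_1$ and $g_{j_1,j_2}$ has leading term $y^{(j_1,j_2)}$, the leading term of any such $f$ is automatically $x_1^{(2)}y^{(j_1,j_2)}$; the real task is to choose $B$ and $C$ so that $f$ becomes supersymmetric.

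To determine $B$ and $C$ I would invoke the characterization of Proposition \ref{cond}. The coefficient $a_{2,j_1,j_2}$ of the leading term participates in defining equations of two kinds: those attached to the block $t=2+j_1$ (with $y_1$ active and $j_2$ frozen) and, through the symmetry relation (\ref{sym}), those attached to the block $t=2+j_2$ (with $y_2$ active and $j_1$ frozen). In the generic case $s_1\le p-3$ the block $t=2+j_1$ has residue $s=s_1+2$, and the equation (\ref{**}) with $r=l_1$ expresses $a_{2,j_1,j_2}$ through the strictly lower coefficients $a_{1,j_1+1,j_2}$ and $a_{0,j_1+2,j_2}$ (the terms $j=s_1+1,s_1+2$) together with the strictly higher coefficients $a_{2+s_1-j,\,pl_1+j,\,j_2}$ for $j<s_1$. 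Setting every coefficient of a monomial lexicographically larger than $x_1^{(2)}y^{(j_1,j_2)}$ equal to zero, I would solve the remaining triangular relations for the lower coefficients and for their symmetric partners in the $y_2$-channel, mirroring the explicit solution produced in Lemma \ref{6.2}. The carry cases $s_1=p-2$ (where $t=2+j_1$ becomes divisible by $p$ and only equations (\ref{sup*}) occur) and $s_1=p-1$ (where the residue drops to $1$), and the analogous cases for $s_2$, must be treated separately; these are precisely the cases separating height $h=1$ from height $h>1$, and they govern which basis monomials of Lemma \ref{span2} the corrective terms $B$ and $C$ must involve.

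The main obstacle is the consistency claim implicit above: that forcing $a_{2,j_1,j_2}\neq 0$ never forces a lexicographically higher coefficient to be nonzero. Concretely, I must check that the higher coefficients $a_{2+s_1-j,\,pl_1+j,\,j_2}$ (for $j<s_1$) and their symmetric partners can be set to zero without contradicting the other defining equations in which they appear, which is exactly the statement that $x_1^{(2)}y^{(j_1,j_2)}$ is a leading term. I expect this verification to reduce, after freezing one of the two $y$-variables and applying the reasoning of Lemma \ref{connect}, to the one-variable picture of Proposition \ref{Kbasis}, with the $p$-adic carry bookkeeping concentrated in the cases $s_1,s_2\in\{p-2,p-1\}$. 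Once consistency is secured, solving the reduced linear system yields the explicit supersymmetric element, and confirming its leading term completes the proof.
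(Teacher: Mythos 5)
Your ansatz $f = x_1^{(2)}g_{j_1,j_2} + x_1 B + C$ and your appeal to Proposition \ref{cond} reproduce the setup of the paper's proof, but your proposal stops exactly where the content of the lemma lies. You engage only with the defining equations that contain the leading coefficient $a_{2,j_1,j_2}$ (the block $t=2+j_1$ with $j_2$ frozen, and its mirror with $j_1$ frozen), and you defer as ``the main obstacle'' the verification that the lower coefficients these equations force on you do not, through the \emph{other} equations in which they occur, force some higher coefficient to be nonzero. That deferred verification \emph{is} the lemma. The paper discharges it by explicit construction: for $j_1>j_2$, a four-term element built from $x_1^{(2)}g_{j_1,j_2}$, two corrections of lower $x_1$-degree with coefficients $\frac{s_1+1}{2}$ and $\frac{s_2+1}{2}$, and the crucial cross term $\frac{(s_1+1)(s_2+1)}{2}\,g_{j_1+1,j_2+1}$; for $j_1=j_2=j$, the two-term element $x_1^{(2)}g_{j,j}-\binom{s+2}{2}g_{j+2,j}$. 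Supersymmetry is then a finite check, because the coefficients involved occur only in the four defining equations $(A_t),(B_t)$ for $t=0,1$. Note that the cross term is forced by the equations $(A_1),(B_1)$ --- the blocks $t=j_1+1$ with frozen $y_2$-exponent $j_2+1$, and $t=j_2+1$ with frozen $y_1$-exponent $j_1+1$ --- which your plan never mentions; this is precisely the interaction of the two $y$-channels created by the symmetry relations (\ref{sym}).

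Beyond incompleteness, the route you propose for closing the gap would fail. Lemma \ref{connect} only propagates \emph{unmarkedness} from a smaller algebra to a larger one (equivalently, markedness passes \emph{down} when variables are deleted); it can never certify that a monomial is marked in $Div[x_1,y_1,y_2]$ from one-variable data, so ``freezing one of the two $y$-variables and applying the reasoning of Lemma \ref{connect}'' to reduce to Proposition \ref{Kbasis} is logically backwards. Moreover, any one-variable reduction is blind to the two-channel coupling that produces the cross term above, which is where the actual difficulty sits. Finally, the case split you anticipate, on $s_1,s_2\in\{p-2,p-1\}$, is not the relevant one: the paper's formulas are uniform in the residues (coefficients such as $\frac{s_1+1}{2}$ simply vanish modulo $p$ when $s_1=p-1$), whereas the case that genuinely requires separate treatment, $j_1=j_2$ --- where the equations $(A_t)$ and $(B_t)$ coincide and the symmetrized monomials collapse --- does not appear in your plan at all.
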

\begin{proof}
The statement is true for $(j_1,j_2)$ of height $h=1$ by Lemma \ref{6.2}.

%If $s_1,s_2\geq p-2$, then $x_1^{(2)}y_1^{(j_1)}y_2^{(j_2)}$ is a supersymmetric element hence $a_{2j_1j2}$ is marked.

First, assume that $j_1>j_2$.
Then the element 
\[x_1^{(2)}g_{j_1,j_2} - \frac{s_1+1}{2}x_1g_{j_1+1,j_2} -\frac{s_2+1}{2} g_{j_1,j_2+1} 
+\frac{(s_1+1)(s_2+1)}{2} g_{j_1+1,j_2+1}\]
is a supersymmetric element with the leading term $x_1^{(2)}y_1^{(j_1)}y_2^{(j_2)}$, showing that $a_{2j_1j_2}$ is marked.
This is easy to verify since the variables corresponding to summands in this expression appear just in the following 
defining equations 
\begin{equation}\tag{$A_t$}
a_{s_1+2-t,j_1-s_1,j_2+t} + \ldots +\binom{s_1+2-t}{2}a_{2,j_1-t,j_2+t}+(s_1+2-t)a_{1,j_1+1-t,j_2+t}+a_{0,j_1+2-t,j_2+t}=0
\end{equation}
for $t=0,1$
and 
\begin{equation}\tag{$B_t$}
a_{s_2+2-t,j_1+t,j_2-s_2} + \ldots +\binom{s_2+2-t}{2}a_{2,j_1+t,j_2-t}+(s_1+2-t)a_{1,j_1+t,j_2+1-t}+a_{0,j_1+t,j_2+2-t}=0
\end{equation}
for $t=0,1$.

If $j_1=j_2=j$, then
\[x_1^{(2)}g_{j,j} - \binom{s+2}{2} g_{j+2,j}\] 
is a supersymmetric element with the leading term $x_1^{(2)}y^{(j,j)}$ showing that $a_{2jj}$ is marked. 
This is easy to verify this since the variables corresponding to summands in this expression appear only in the 
defining equations $(A_0)=(B_0)$ and $(A_1)=(B_1)$. 
\end{proof}

\begin{lm}\label{blowout}
Assume that $f=\sum_{(i_1,j_1,j_2)} a_{(i_1,j_1,j_2)}x^{(i_1)}y^{(j_1,j_2)}$ is a supersymmetric element in $Div[x_1,y_1,y_2]$.
If $r_1=1$, $s_1<p-1$, $l_1>0$ and $s_2=0$, then $a_{i_1,j_1,j_2}\neq 0$ implies that some $a_{u_1,v_1,v_2}\neq 0$, where $u_1>i_1$.
\end{lm}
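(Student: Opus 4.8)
The plan is to prove the contrapositive: assuming that every coefficient $a_{u_1,v_1,v_2}$ of $f$ with $u_1>i_1$ vanishes, I will deduce that $a_{i_1,j_1,j_2}=0$. Throughout I use the defining equations of Proposition \ref{cond} for $Div[x_1,y_1,y_2]$, namely the blocks (\ref{**}) and (\ref{sup*}) arising from pairing $x_1$ with $y_1$ (with the $y_2$-exponent frozen), together with the symmetry (\ref{sym}) in $y_1\leftrightarrow y_2$. The symmetry is used repeatedly to relabel a coefficient so that whichever $y$-exponent I want to manipulate sits in the $y_1$-slot. Write $i_1=pk_1+1$, $j_1=pl_1+s_1$ with $l_1\geq 1$ and $0\leq s_1\leq p-2$, and $j_2=pl_2$.

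First I would record the basic reduction. Since $t:=i_1+j_1=p(k_1+l_1)+(s_1+1)$ with $s=s_1+1\leq p-1$, the target coefficient lies in the block (\ref{**}) indexed by $r=l_1$:
\[\sum_{j=0}^{s_1+1}\binom{s_1+1}{j}\,a_{pk_1+s_1+1-j,\;pl_1+j,\;j_2}=0.\]
Every term with $j<s_1$ has $x$-exponent at least $pk_1+2>i_1$ and so vanishes by assumption, leaving $(s_1+1)\,a_{i_1,j_1,j_2}+a_{pk_1,\,pl_1+s_1+1,\,j_2}=0$. As $s_1+1$ is a unit modulo $p$ (this is where $s_1<p-1$ enters), it suffices to show that $\beta:=a_{pk_1,\,pl_1+s_1+1,\,j_2}$ vanishes.

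If $j_2=pl_2>0$ this is immediate: relabelling $\beta$ as $a_{pk_1,\,pl_2,\,pl_1+s_1+1}$ and using the $s=0$ relation (\ref{sup*}) in the block $x_1+y_1=p(k_1+l_2)$ that connects indices $r=l_2-1$ and $r=l_2$ (valid because $l_2\geq 1$), the higher term is $a_{pk_1+p,\,p(l_2-1),\,pl_1+s_1+1}$, whose $x$-exponent $pk_1+p$ exceeds $i_1$ and hence is zero; thus $\beta=0$. The genuinely harder case is $j_2=0$, where the $y_2$-slot is empty and this one-step ``carry a block of $p$ into $x_1$'' move is unavailable. Here I would instead relabel the target as $a_{i_1,0,j_1}$ and read off the two-term relation (\ref{**}) with $s=1$ in the block $x_1+y_1=pk_1+1$, which gives $a_{i_1,j_1,0}=-\gamma$ with $\gamma:=a_{pk_1,1,pl_1+s_1}$, so that it suffices to prove $\gamma=0$.

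For the case $j_2=0$ the plan is a downward induction on the $y_1$-remainder. Relabelling $\gamma$ as $a_{pk_1,\,pl_1+s_1,\,1}$, its $y_1$-exponent has remainder $s_1$ and positive quotient $l_1$. The base case, remainder $0$, closes exactly as in the $j_2>0$ argument: an $s=0$ relation (\ref{sup*}) in a block whose $y_1$-exponent is the positive multiple $pl_1$ (this is where $l_1>0$ is needed) links the coefficient to one of $x$-exponent $pk_1+p>i_1$, which vanishes. For the inductive step I would peel one unit off the remainder: an (\ref{**}) relation in the block $x_1+y_1=p(k_1+l_1)+s_1$ (whose only surviving terms sit at $x$-exponents $i_1$ and $pk_1$), followed by a relation in the swapped block $x_1+y_2=pk_1+2$, expresses the coefficient as a unit multiple of one with the $y_1$-remainder decreased by one and the $y_2$-exponent increased by one, the $x$-exponent toggling between $pk_1$ and $pk_1+1$. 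After $s_1$ such steps the remainder reaches $0$ and the base case applies. I expect the main obstacle to be precisely this bookkeeping: tracking the migrating $y_2$-exponent and selecting, at each stage, the correct block together with its two surviving terms, while verifying that every binomial coefficient encountered lies in $\{1,\ldots,s_1+1\}\subseteq\{1,\ldots,p-1\}$ and is therefore invertible. This invertibility, the hypothesis $l_1>0$ supplying a nonzero multiple of $p$ to carry into the $x$-exponent at the base, and the standing assumption $p>2$, are exactly the places where the stated conditions are consumed.
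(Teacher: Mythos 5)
Your proposal is correct and is essentially the paper's own proof: the same case split ($j_2=pl_2>0$ versus $j_2=0$), the same opening block equation, the same one-step (\ref{sup*}) carry when $l_2>0$, and, for $j_2=0$, the same zig-zag chain of paired equations alternating between blocks with $y_2$ frozen and $y_1\leftrightarrow y_2$ swapped, terminating in the (\ref{sup*}) relation that consumes $l_1>0$. The only difference is presentational: you phrase it as a contrapositive with downward induction on the $y_1$-remainder, while the paper simply lists the $s_1+1$ pairs of equations explicitly.
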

\begin{proof}
First assume $l_2>0$. Then the claim follows by considering the system of equations
\[\begin{aligned}&a_{i_1+s_1,j_1-s_1,j_2} &+ \ldots + &(s_1+1) a_{i_1,j_1,j_2} &+ a_{i_1-1,j_1+1,j_2}&=0\\
& & &a_{i_1-1+p,j_2-p,j_1+1}&+a_{i_1-1,j_2,j_1+1}&=0.
\end{aligned}\] 

If $l_2=0$, then $j_2=0$. Considering the series of $s_1+1$ pairs of equations
\[\begin{aligned}
&&&a_{i_1,0,j_1} &+a_{i_1-1,1,j_1}&=0\\
&a_{i_1+s_1-1,j_1-s_1,1} +&\ldots &+s_1 a_{i_1,j_1-1,1} &+a_{i_1-1,j_1,1}&=0\\
&&a_{i_1+1,0,j_1-1}&+2a_{i_1,1,j_1-1}&+a_{i_1-1,2,j_1-1}&=0\\
&a_{i_1+s_1-2,j_1-s_1,2} +&\ldots &+(s_1-1) a_{i_1,j_1-2,2} &+a_{i_1-1,j_1-1,2}&=0\\
&&\ldots&&&\\
&&\ldots&&&\\
&a_{i_1+s_1-1,0,j_1-s_1+1}+&\ldots &+s_1a_{i_1,s_1-1,j_1-s_1+1}&+a_{i_1-1,s_1,j_1-s_1+1}&=0\\
&&&a_{i_1,j_1-s_1,s_1} &+a_{i_1-1,j_1-s_1+1,s_1}&=0\\
&a_{i_1+s_1,0,j_1-s_1}+&\ldots &+(s_1+1)a_{i_1,s_1,j_1-s_1}&+a_{i_1-1,s_1+1,j_1-s_1}&=0\\
&&&a_{i_1-1+p,j_1-s_1-p,s_1+1} &+a_{i_1-1,j_1-s_1,s_1+1}&=0
\end{aligned}\]
implies the claim.
\end{proof}

\begin{lm}\label{6.5}
If $r_1=1$ and $(j_1,j_2)$ has the height $h>1$, then the symmetrized monomial $x_1^{(i_1)}y^{(j_1,j_2)}$ is unmarked in $Div[x_1,y_1,y_2]$.
\end{lm}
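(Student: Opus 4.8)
The plan is to prove the statement in its natural ``unmarked'' form: I will show that for every supersymmetric element $f=\sum a_{i_1,j_1,j_2}x^{(i_1)}y^{(j_1,j_2)}$ the hypothesis $a_{i_1,j_1,j_2}\neq 0$ forces a strictly lexicographically larger coefficient to be nonzero, so that the symmetrization of $x_1^{(i_1)}y^{(j_1,j_2)}$ can never be a leading term. Throughout I would work with the defining equations of Proposition \ref{cond}. The extra leverage, which I expect to be essential here, is that since $f$ is symmetric in $y_1,y_2$, interchanging $y_1$ and $y_2$ shows that $f$ satisfies $(\star)$ also for the pair $(x_1,y_2)$; hence the equations (\ref{**}) and (\ref{sup*}) are available for both $(x_1,y_1)$ and $(x_1,y_2)$. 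Writing $i_1=pk_1+1$ (as $r_1=1$) and $j_v=pl_v+s_v$, the height $h>1$ hypothesis amounts to $s_1\neq p-1$ together with the failure of ``$0<j_1<p-1$ and $j_2=0$''. I would restrict to a nontrivial $y$-part $j_1\geq 1$ (the pure-power case $(j_1,j_2)=(0,0)$ gives $x_1^{(pk_1+1)}=x_1^{(pk_1)}\cdot x_1$, which is in fact marked for $k_1\geq 1$ and so is handled separately) and split on whether $s_2=0$ or $s_2>0$.

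If $s_2=0$, the height condition forces $l_1>0$: when $j_2=0$ the failure of ``$0<j_1<p-1$'' together with $s_1\neq p-1$ gives $j_1\geq p$, while when $j_2>0$ one has $j_1\geq j_2\geq p$. In either case the hypotheses $r_1=1$, $s_1<p-1$, $l_1>0$, $s_2=0$ of Lemma \ref{blowout} hold, and that lemma already produces a nonzero $a_{u_1,v_1,v_2}$ with $u_1>i_1$, which is lexicographically higher. This settles the case $s_2=0$.

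For $s_2>0$ I would argue in two steps. Step A: apply (\ref{**}) for $(x_1,y_1)$ with $t=i_1+j_1$ (residue $1+s_1<p$) at level $r=l_1$. There the coefficient of $a_{i_1,j_1,j_2}$ is $\binom{1+s_1}{s_1}=1+s_1\neq 0$; the terms with $j<s_1$ have $x_1$-index $pk_1+1+s_1-j>i_1$ and are strictly higher, while the only remaining term is $a_{pk_1,j_1+1,j_2}$, which is lower. Hence either a strictly higher coefficient is already nonzero, or $a_{pk_1,j_1+1,j_2}\neq 0$. Step B: to this last coefficient apply (\ref{**}) for $(x_1,y_2)$ with $t=pk_1+j_2$ (residue $s_2>0$) at level $r=l_2$, freezing the $y_1$-index at $j_1+1$. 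Here $a_{pk_1,j_1+1,j_2}$ occurs with coefficient $1$, and every other term has $x_1$-index $pk_1+s_2-j\geq pk_1+1=i_1$; at the borderline term ($j=s_2-1$) the $x_1$-index equals $i_1$ but the $y$-part is $(j_1+1,j_2-1)$, which is lexicographically larger than $(j_1,j_2)$ since $j_1+1>j_1$. Thus all remaining terms are strictly higher than $a_{i_1,j_1,j_2}$, and because $a_{pk_1,j_1+1,j_2}\neq 0$ one of them must be nonzero, giving the required higher nonzero coefficient.

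The main obstacle is the bookkeeping in Step B: one must check that, after lowering the $x_1$-index by one in Step A and feeding the weight back through the $(x_1,y_2)$-equation, every competing term genuinely exceeds the original monomial in the lexicographic order, the delicate point being the borderline term of equal $x_1$-index where the comparison rests on $(j_1+1,j_2-1)>(j_1,j_2)$ after symmetrization. I would also make explicit that $j_1+1\geq j_2-1$ keeps that term already symmetrized, so no re-ordering intervenes. Collecting the two cases, every monomial with $r_1=1$ and $(j_1,j_2)$ of height $h>1$ (and $j_1\geq 1$) forces a strictly larger nonzero coefficient and is therefore unmarked, which completes the proof.
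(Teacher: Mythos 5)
Your proof is correct and takes essentially the same route as the paper: in the case $s_2>0$ you invoke the same two defining equations (you phrase the second via the $(x_1,y_2)$-pair with frozen $y_1$-index $j_1+1$, while the paper writes the identical equation after swapping the $y$-indices by symmetry), and in the case $s_2=0$ you reduce to Lemma \ref{blowout} exactly as the paper does. Your explicit treatment of the edge case $(j_1,j_2)=(0,0)$, where $x_1^{(pk_1+1)}$ is in fact marked and must therefore be read as height $h=1$, is a careful point the paper leaves implicit.
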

\begin{proof}
Since $(j_1,j_2)$ has height $h>1$, we have $0\leq s_1<p-1$.
If $s_2>0$, then the equations 
\[a_{i_1+s_1,j_1-s_1,j_2} + \ldots + (s_1+1) a_{i_1,j_1,j_2} + a_{i_1-1,j_1+1,j_2}=0\]
\[a_{i_1+s_2-1,j_2-s_2,j_1+1} + \ldots + s_2 a_{i_1,j_2-1,j_1+1} + a_{i_1-1,j_2,j_1+1}=0\]
imply that $a_{i_1,j_1,j_2}$ is unmarked since $a_{i_1-1,j_2,j_1+1}$ is a linear combination of terms higher than $a_{i_1,j_1,j_2}$. 

If $s_2=0$ and $l_2>0$, then $a_{i_1,j_1,j_2}$ is unmarked by Lemma \ref{blowout}.

Finally, if $j_2=0$, then $j_1>p-1$ and $a_{i_1,j_1,0}$ is unmarked by Lemma \ref{blowout}.
\end{proof}

\begin{cor}\label{cor3}
If $r_1=1$ and $(j_1,\ldots, j_n)$ has the height $h>1$, then every symmetrized monomial $x_1^{(i_1)}y^{(j_1,\ldots,j_n)}$ is unmarked in $Div[x_1,y_1,\ldots, y_n]$.
\end{cor}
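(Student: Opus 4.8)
The plan is to argue by induction on $n$, taking Lemma \ref{6.5} as the base case $n=2$ and using Lemma \ref{connect} to transport unmarkedness from a chosen subfamily of the variables back up to $Div[x_1,y_1,\ldots,y_n]$. The whole argument rests on a combinatorial dichotomy for a tuple $(j_1,\ldots,j_n)$ of height $h>1$: either it contains a two-element subtuple that is itself of height $h>1$, or it has an extremely rigid shape that I will either shrink (reducing $n$) or dispose of by hand. Throughout I write $j_u=pl_u+s_u$ with $0\le s_u<p$.

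The first step is to record when a pair is of height $h>1$: for $a<b$ (so $j_a\ge j_b$), the subtuple $(j_a,j_b)$ has height $h>1$ precisely when $j_a\not\equiv p-1\pmod p$ and it is not the case that both $0<j_a<p-1$ and $j_b=0$. From this I would show that if $(j_1,\ldots,j_n)$ has height $h>1$ but contains no such pair, then every index $a$ with $j_a\not\equiv p-1$ must be followed only by zeros, since otherwise $(j_a,j_b)$ with $j_b>0$ would be a height-$h>1$ pair. Hence the tuple is $(j_1,\ldots,j_{a_0-1},j_{a_0},0,\ldots,0)$ with $j_u\equiv p-1\pmod p$ for $u<a_0$. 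If $a_0=n$ this tuple has height $h=1$, so in fact $a_0\le n-1$; the trailing pair $(j_{a_0},0)$ must then fail to have height $h>1$, forcing $0<j_{a_0}<p-1$; and if $j_1\le p-1$ the tuple would again have height $h=1$, so $j_1>p-1$. Whenever a height-$h>1$ pair $(j_a,j_b)$ does exist, I instead delete all remaining $y$-variables, apply Lemma \ref{6.5} in $Div[x_1,y_a,y_b]$, and lift the conclusion by Lemma \ref{connect}.

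It then remains to treat the rigid family $(j_1,\ldots,j_{a_0-1},s,0,\ldots,0)$ with $j_1>p-1$, leading entries $\equiv p-1\pmod p$, and $0<s<p-1$, where $s=j_{a_0}$. I would reduce $n$ by deleting a single $y$-variable while staying inside this family: a trailing zero may be removed when $a_0\le n-2$, and an early variable $y_u$ with $2\le u\le a_0-1$ may be removed when $a_0=n-1$ and $a_0\ge 3$; in each case the smaller tuple again has height $h>1$, so the inductive hypothesis and Lemma \ref{connect} finish the job. This bookkeeping leaves exactly one irreducible configuration, the three-variable trap $(j_1,s,0)$ in $Div[x_1,y_1,y_2,y_3]$ with $j_1\equiv p-1\pmod p$, $j_1>p-1$ and $0<s<p-1$.

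This three-variable trap is where I expect the real difficulty to lie, and Lemma \ref{6.5} cannot be invoked directly because the pair $(j_1,s)$ has height $h=1$; it is the full $\Sigma_3$-symmetry in $y_1,y_2,y_3$ that forces unmarkedness. To settle it I would run a descent through the defining system of Proposition \ref{cond} in the manner of Lemma \ref{blowout}. Assuming $a_{i_1,j_1,s,0}\ne 0$, I use the symmetry in the $y$'s to move the small exponent into the distinguished first $y$-slot and read off the relevant equation of type (\ref{**}); because $i_1\equiv 1\pmod p$ the corresponding variable is never the lowest term in any single block, so each equation also produces a strictly lower companion and a one-shot argument is impossible. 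The substance of the proof is then to chain these equations, alternating which of $y_1,y_2,y_3$ plays the distinguished role, until a coefficient with strictly larger $x_1$-exponent is forced to be nonzero, exactly as the paired families of equations in Lemma \ref{blowout} push a nonzero coefficient upward. Carrying this finite descent to termination, rather than any conceptual subtlety, is the main obstacle.
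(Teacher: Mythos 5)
Your diagnosis is correct, and it exposes a genuine flaw in the paper's own argument: the assertion that every tuple of height $h>1$ contains a pair of height $h>1$ is false. For $p=3$, the tuple $(5,1,0)$ has height $h>1$ (since $s_2=1\neq p-1$), yet all three pairs $(5,1)$, $(5,0)$, $(1,0)$ extracted from it have height $h=1$, so the paper's two-line proof has nothing to feed into Lemmas \ref{6.5} and \ref{connect}. The corollary itself still holds there: writing $a_{i_1,j_1,j_2,j_3}$ for the coefficients of a supersymmetric element of $Div[x_1,y_1,y_2,y_3]$ and assuming all coefficients with $i_1\geq 2$ vanish, the defining equations of Proposition \ref{cond} attached to the frozen pairs $(j_2,j_3)=(2,2),(3,2),(2,1),(4,1),(1,1),(5,1)$ force, in this order and up to the symmetry (\ref{sym}),
\[
a_{0,3,2,2}=0,\quad a_{1,3,2,1}=0,\quad a_{0,4,2,1}=0,\quad a_{1,4,1,1}=0,\quad a_{0,5,1,1}=0,\quad a_{1,5,1,0}=0,
\]
so $x_1y^{(5,1,0)}$ is indeed unmarked. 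This alternating descent is exactly what you gesture at, and your reduction of the general case (induction on $n$ via Lemma \ref{connect}) to the configuration $(j_1,s,0)$ is sound.

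Nevertheless, as a proof your proposal has two genuine gaps. First, the trap itself is not proved: you only describe a strategy and concede that carrying the descent to termination is ``the main obstacle''. Since this configuration is precisely where the paper's own proof collapses, it is not residual bookkeeping --- it is the entire content of the corollary beyond what Lemmas \ref{11}, \ref{6.5} and \ref{connect} already give, and the chain above has to be constructed and shown to terminate for general $p$, $j_1\equiv p-1\pmod p$ and $0<s<p-1$. Second, your treatment of tuples with an all-zero tail is wrong. Your pair dichotomy counts $(0,0)$ as a pair of height $h>1$, so a tuple such as $(2p-1,0,0)$ gets sent to Lemma \ref{6.5} applied to $(j_a,j_b)=(0,0)$; but that instance of Lemma \ref{6.5} is false, since $x_1^{(i_1)}$ with $r_1=1$ can certainly be marked in $Div[x_1,y_a,y_b]$: $x_1$ is the leading term of $E_1$, and $x_1^{(p+1)}$ is the leading term of $S(x_1^{(p)})S(x_1)$. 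The pair $(0,0)$ must be counted as height $1$ --- as the paper itself implicitly does when it lists $x_1=x_1y^{(0,0)}$ among the generators in Lemma \ref{span2} --- and then $(2p-1,0,0)$ is another pairless height-$h>1$ tuple, i.e.\ another instance of the trap; for $p=3$ one checks that $x_1y^{(5,0,0)}$ is unmarked by the analogous chain through $a_{0,3,2,1}$, $a_{1,3,1,1}$, $a_{0,4,1,1}$, $a_{1,4,1,0}$, $a_{0,5,1,0}$. So your induction must include $s=0$ in the trap and settle both cases by the same, still missing, descent argument.
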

\begin{proof}
If $(j_1, \ldots, j_n)$ has the height $h>1$, then there is a pair $(j_u,j_v)$, obtained by removing entries from $(j_1, \ldots, j_n)$, that has the height $h>1$.
Use Lemmas \ref{6.5} and \ref{connect} to complete the proof.
\end{proof}

\begin{theorem}
The algebra $A$ of all supersymmetric elements in $Div[x_1,y_1,y_2]$ is generated by elements $S(M)$, where $M$ is one of the symmetrized monomials 
\begin{itemize}
\item{$x_1^{(p^s)}$ for $s>0$;}
\item{$x_1y^{(j_1,j_2)}$ for  $(j_1,j_2)$ of height $h=1$;}
\item{$x_1^{(2)}y^{(j_1,j_2)}$ for $(j_1,j_2)$ of height $h>1$.}
\end{itemize}
\end{theorem}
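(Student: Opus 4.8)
The plan is to mirror, beat for beat, the proof of the companion theorem for $Div[x_1,\ldots,x_m,y_1]$: first exhibit a supersymmetric element having each listed monomial as its leading term (so each is \emph{marked}), then show that every remaining symmetrized monomial is \emph{unmarked}, and finally assemble the two halves using Lemma \ref{span2} together with Proposition \ref{generators}. Throughout I parametrize a symmetrized monomial $x_1^{(i_1)}y^{(j_1,j_2)}$ by the residue $r_1\equiv i_1\pmod p$ and by the height of $(j_1,j_2)$; the goal is to prove that the set of marked monomials coincides exactly with the $K$-basis of the subalgebra $C$ recorded in Lemma \ref{span2}, namely $x_1^{(i_1)}$ with $r_1=0$, the monomials $x_1^{(i_1)}y^{(j_1,j_2)}$ with $r_1\ge 1$ and height $h=1$, and those with $r_1\ge 2$ and height $h>1$. (Here I use that $(0,0)$ is of height $h=1$, which is what makes $x_1=x_1y^{(0,0)}$ a legitimate generator and supplies $\mathcal{S}_1$.)

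For the marked direction I would argue in three groups. The monomials $x_1^{(p^s)}$ with $s\ge 2$ are marked by Lemma \ref{p} (take $k_1=p^{s-1}>1$), while the case $s=1$ is the $k_1=1$ instance of the same construction: $x_1^{(p)}-y_1^{(p)}-y_2^{(p)}$ is supersymmetric with leading term $x_1^{(p)}$, and this case genuinely must be kept since $(x_1)^p=p!\,x_1^{(p)}=0$ prevents building $x_1^{(p)}$ from lower generators. Products of the $x_1^{(p^s)}$ with $x_1$ then realize every $x_1^{(i_1)}$ with $r_1=0$ (and every pure power $x_1^{(i_1)}$ with $r_1\ge 1$ as well) as a leading term, as encoded in Lemma \ref{span2}. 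For the mixed monomials, Lemma \ref{6.2} specialized to $n=2$ shows $x_1y^{(j_1,j_2)}$ is marked whenever $(j_1,j_2)$ has height $h=1$, and Lemma \ref{6.3} shows $x_1^{(2)}y^{(j_1,j_2)}$ is marked for all $(j_1,j_2)$, in particular for height $h>1$; boosting the $x$-exponent via products again covers $r_1\ge 2$ with height $h>1$.

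For the unmarked direction I would run an exhaustive split on $(r_1,\text{height})$. If $r_1=0$ and $j_1>0$, then $p\mid i_1$ with $j_1>0$, so Lemma \ref{11} applies (this disposes of every $r_1=0$ monomial other than the pure powers, regardless of height). If $r_1=1$ and $(j_1,j_2)$ has height $h>1$, then Lemma \ref{6.5} gives unmarkedness; this is the delicate boundary, since the neighbouring monomials with $r_1\ge 2$ and the same height \emph{are} marked, and since $(0,0)$ lies in height $h=1$ and is therefore correctly excluded from this box. The only remaining entries of the table, namely $r_1=0$ with $(j_1,j_2)=(0,0)$, $r_1\ge 1$ with height $h=1$, and $r_1\ge 2$ with height $h>1$, are precisely the three marked families treated above, so the dichotomy is complete and matches the basis of $C$ verbatim.

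Once marked and unmarked monomials are pinned down, Lemma \ref{span2} asserts that $C$ has the marked monomials as a $K$-basis, which is exactly the statement that each marked monomial occurs as the leading term of an element of the algebra $A$ generated by the listed $S(M)$; Proposition \ref{generators} then forces $A=\mathcal{S}$. I expect no serious obstacle in the deductive chain itself, since the genuine computations are already discharged in Lemmas \ref{6.2}, \ref{6.3}, \ref{11} and \ref{6.5}; the only points demanding real care are verifying that the case split on $(r_1,\text{height})$ is truly exhaustive and perfectly aligned with Lemma \ref{span2}, correctly classifying the degenerate $y$-part $(0,0)$ as height $h=1$, and separately certifying the base generator $x_1^{(p)}$.
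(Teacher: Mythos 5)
Your proposal is correct and follows essentially the same route as the paper's own proof: establish that the marked monomials are exactly the basis monomials of $C$ in Lemma \ref{span2} (markedness via Lemma \ref{p}, Lemma \ref{6.2}, Lemma \ref{6.3} and products; unmarkedness via Lemma \ref{11} and Lemma \ref{6.5}, which the paper invokes through its $n$-variable form, Corollary \ref{cor3}), then conclude with Lemma \ref{span2} and Proposition \ref{generators}. You are in fact more careful than the paper on two boundary points it glosses over: the generator $x_1^{(p)}$, which falls outside the hypothesis $k_1>1$ of Lemma \ref{p} and needs the explicit element $x_1^{(p)}-y_1^{(p)}-y_2^{(p)}$, and the classification of $(0,0)$ as height $h=1$, which is exactly what makes $x_1$ a legitimate generator and accounts for $\mathcal{S}_1$.
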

\begin{proof}
Lemmas \ref{span2}, \ref{p}, \ref{6.2} and \ref{6.3} describe all marked monomials.
Lemma \ref{11} and Corollary \ref{cor3} describe all unmarked monomials. Lemma \ref{span2} and Proposition \ref{generators} conclude the proof.
\end{proof}

%The following lemma will be needed later.

%\begin{lm}\label{blow2}
%Consider the following expressions over $Div[x_1,y_1,y_2]$ and assume $p$ divides $i_1$ and $j_1$, and $j_1\geq p$. Then 
%$a_{i_1,j_1,j_2}=a_{i_1,j_1-p,j_2+p}=\ldots=a_{i_1,s_1,j_1+j_2-s_1}$.
%\end{lm}
%\begin{proof}
%The proof is analogous to that of Lemma \ref{blow}.
%\end{proof}

\section{Supersymmetric elements in $Div[x_1,x_2,y_1,y_2]$} 

\begin{lm}\label{span3}
Denote by $C$ the subalgebra of $Div[x_1,x_2,y_1,y_2]$ generated by following symmetrized monomials:
\begin{itemize}
\item{$x_1$};
\item{$x_1^{(p^s)}$ for some $s>0$};
\item{$x_1x_2y^{(j_1,j_2)}$, where $(j_1,j_2)$ has the height $h=1$;}
\item{$x^{(i_1,i_2)}y^{(j_1,j_2)}$, where $r_1=r_2=1$, $k_2>0$ and $s_1=p-1$;}
\item{$x^{(i_1,i_2)}y^{(j_1,j_2)}$, where $r_1=r_2=2$.}
\end{itemize}
Then $C$ has a $K$-basis consisting of the following monomials 
\begin{itemize}
\item $x_1^{(i_1)}$;
\item $x_1^{(i_1)}x_2y^{(j_1,j_2)}$, where $r_1>0$ and $(j_1,j_2)$ has the height $h=1$;
\item $x^{(i_1,i_2)}y^{(j_1,j_2)}$, where $r_1>0$, $r_2=1$, $k_2>0$ and $s_1=p-1$;
\item $x^{(i_1,i_2)}y^{(j_1,j_2)}$, where $r_1,r_2\geq 2$.
\end{itemize}
\end{lm}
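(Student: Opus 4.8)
The plan is to follow the leading-term reduction used in the proof of Proposition~\ref{pr1}, now applied to the two groups of variables simultaneously. I rely throughout on the basic rule that a product $z^{(a_1)}\cdots z^{(a_k)}$ is a nonzero scalar multiple of $z^{(a_1+\cdots+a_k)}$ precisely when the base-$p$ addition $a_1+\cdots+a_k$ has no carry, and is $0$ otherwise. Hence, if $g_1,\dots,g_k$ are generators with leading terms $x^{(\lambda^1)}y^{(\mu^1)},\dots,x^{(\lambda^k)}y^{(\mu^k)}$, the leading term of $g_1\cdots g_k$ is $x^{(\lambda^1+\cdots+\lambda^k)}y^{(\mu^1+\cdots+\mu^k)}$ when that coordinatewise sum is carry-free, and $g_1\cdots g_k=0$ otherwise. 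Because every listed generator is symmetric with sorted leading exponents, these coordinatewise sums are again sorted, so each product of generators is a symmetric element whose leading term is a symmetrized monomial. The assertion is therefore equivalent to showing that the set of leading terms occurring in $C$ coincides with the four listed families; a standard triangularity argument (elements of $C$ with pairwise distinct leading terms are linearly independent) then turns such a set of leading terms into the stated $K$-basis.

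First I would prove the lower bound: each of the four families is realized as a leading term in $C$. Family (a) is exactly the $p$-adic expansion argument of Proposition~\ref{pr1}, using $x_1=x_1^{(p^0)}$ and the $x_1^{(p^s)}$. For families (b), (c) the idea is to pick a single ``full'' generator (of type $3$, resp.\ $4$) whose higher digits $k_1,k_2$ and whose $y$-part already coincide with those of the target, and then to correct only the units digit $r_1$ by multiplying with copies of $x_1$; since the correction happens at the units place alone, the no-carry rule keeps the resulting coefficient nonzero modulo $p$. Concretely, for (b) I multiply $x_1x_2y^{(j_1,j_2)}$ (of height one) by $x_1^{(i_1-1)}$, and for (c) I start from the type-$4$ generator with $x$-exponents $(pk_1+1,pk_2+1)$, which is legitimate since sortedness forces $k_1\ge k_2>0$, and raise its first units digit to $r_1$ with copies of $x_1$.

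The main obstacle is the realization of family (d). One must produce a leading term $x^{(i_1,i_2)}y^{(j_1,j_2)}$ with both units digits $r_1,r_2\ge 2$ and a prescribed $y$-part, but the only generators acting on the second $x$-slot are the full generators, each of which drags along its own $y$-part and contributes a fixed units digit ($2$ for type $5$, and $1$ for types $3$ and $4$). Thus the second slot cannot be adjusted ``bare'' the way the first one can. I would build these leading terms by combining a type-$5$ generator (to supply the higher digits together with $r_2=2$ and, where possible, the entire $y$-part) with further type-$5$ generators of trivial $y$-part to lift the digits, and, when an odd value of $r_2$ is required, with a height-one or type-$4$ generator to furnish the missing unit of $x_2$ while its $y$-contribution is absorbed into the prescribed $y$-part. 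Showing that these combinations can always be assembled carry-free, with the $y$-parts multiplying to the target, is the delicate point; this is precisely where the height bookkeeping of the preceding section is needed, and I expect it to be the hardest case to make uniform.

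Finally, for the upper bound I would show that no monomial outside the four families arises as a leading term. Separating the bare $x_1$-factors from the full factors in an arbitrary nonzero product, I split by the number of full factors. With none, only the first slot is touched and the leading term lies in family (a). With exactly one full factor, the $x_2$-exponent and the $y$-part are inherited from that factor while the bare factors only raise $r_1$; the no-carry rule forces $r_1>0$, so the leading term falls in family (b), (c) or (d) according to the type of the full factor. With two or more full factors, the units digits in both slots add without carry, giving $r_1\ge 2$ and $r_2\ge 2$, hence family (d). Combined with the lower bound and the triangularity of leading terms, this identifies the leading terms of $C$ with the listed families and produces the claimed basis.
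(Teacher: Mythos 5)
Your reduction of the lemma to a combinatorial statement about carry-free products is sound, but note why: the generators here are literal monomials (not orbit sums), so every product of generators is a scalar multiple of a single monomial, nonzero exactly when the exponent additions are carry-free in each of the four variables separately. Your clause ``$g_1\cdots g_k=0$ otherwise'' is true only for this reason; for genuine symmetric orbit sums it fails, e.g. $(x_1^{(2)}+x_2^{(2)})^2=2x_1^{(2)}x_2^{(2)}\neq 0$ when $p=3$, even though $x_1^{(2)}\cdot x_1^{(2)}=0$. Granting the monomial reading, your units-digit bookkeeping is correct: writing $A,B,G$ for the numbers of factors of the third, fourth and fifth kind and $c_0$ for the number of bare $x_1$'s, carry-freeness forces $r_2=A+B+2G<p$ and $r_1=c_0+A+B+2G<p$, which yields your upper bound (zero, one, or at least two ``full'' factors land in the stated families), and your realizations of the first three families are correct.

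The genuine gap is exactly the case you flag as delicate, the family $r_1,r_2\geq 2$, and the absorption strategy you sketch cannot be repaired as stated. Take a target with $y$-part $(j,j)$ where $s_1\notin\{0,p-1\}$: carry-freeness in $y_1$ makes the unit digits of all factors' $y_1$-exponents sum to $s_1\leq p-2$, so no factor may have $y_1$-unit digit $p-1$; this excludes every type-4 factor and every height-one part with $s_1=p-1$. The remaining nontrivial height-one parts are of the form $(a,0)$ with $a>0$, and these are excluded too, because the type-5 factors are sorted, so their $y_1$-exponents sum to at least their $y_2$-exponents, i.e. at least $j$, leaving no room for a positive $a$. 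What closes the argument is the degenerate member of the third family: the pair $(0,0)$ must be read as having height one, so that $x_1x_2$ itself is a generator with trivial $y$-part (this inclusive reading is forced by the lemma and the subsequent Theorem: otherwise neither $x_1^{(i_1)}x_2$ nor the monomials just discussed would lie in $C$, and $E_2=S(x_1x_2)$ could not be generated). With it, every monomial of the last family is obtained from the single carry-free factorization in which $x^{(i_1,i_2)}y^{(j_1,j_2)}$ equals, up to a nonzero scalar, $x^{(pk_1+2,\,pk_2+2)}y^{(j_1,j_2)}\cdot (x_1x_2)^{r_2-2}\cdot x_1^{\,r_1-r_2}$: the first factor is a legitimate type-5 generator since sortedness of the target gives $k_1\geq k_2$ and $j_1\geq j_2$; the entire $y$-part and all higher $x$-digits occur in that one factor; and the unit-digit sums are $2+(r_2-2)+(r_1-r_2)=r_1<p$ at $x_1$ and $2+(r_2-2)=r_2<p$ at $x_2$, so no carries occur. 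Substituting this for your ``delicate point'' completes the proof; without it, no case with $r_2>2$ is established.
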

\begin{proof}
The proof is left for the reader.
\end{proof}

\begin{lm}\label{pat*}
Every symmetrized monomial $x_1x_2y^{(j_1,j_2)}$, where $(j_1,j_2)$ has the height $h=1$, is marked in 
$Div[x_1,x_2,y_1,y_2]$.
\end{lm}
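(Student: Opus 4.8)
The plan is to produce, for every height-one pair $(j_1,j_2)$, an explicit supersymmetric element of $Div[x_1,x_2,y_1,y_2]$ whose leading term is $x_1x_2y^{(j_1,j_2)}$; this makes the monomial marked by definition. I would first dispose of the branch $0<j_1<p-1,\ j_2=0$ of the height-one condition. Here $x_1x_2y_1^{(j_1)}=\rho_2\,y_1^{(j_1)}$ is precisely the leading term of the characteristic-zero generator $E_{2+j_1}$: as $2+j_1>m=2$, the leading term of $E_{2+j_1}$ is $x_1x_2\,\ell_{j_1}$, and since $j_1<p$ no $p$-adic carry occurs, so $d_{j_1}=0$ and $\ell_{j_1}=y_1^{(j_1)}$. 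Thus this monomial is marked with no extra effort, and it remains to treat the branch $s_1=p-1$.

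For $s_1=p-1$ I would follow the template of Lemmas \ref{6.2} and \ref{6.3}. Writing $g_{a,b}$ for the symmetric function in $y_1,y_2$ with leading term $y_1^{(a)}y_2^{(b)}$ and setting $\rho_1=x_1+x_2$, $\rho_2=x_1x_2$, I look for an element of the form
\[
f=\rho_2\,g_{j_1,j_2}+\sum_{a+b=j_1+j_2+1}\alpha_{a,b}\,\rho_1\,g_{a,b}+\sum_{c+d=j_1+j_2+2}\beta_{c,d}\,g_{c,d},
\]
where the correction pairs $(a,b)$ and $(c,d)$ are obtained from $(j_1,j_2)$ by adding boxes to the $y$-variables. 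Since every summand is symmetric in $x_1,x_2$ and in $y_1,y_2$, only the supersymmetry condition has to be arranged, and the leading term is $x_1x_2y^{(j_1,j_2)}$ because the remaining summands have strictly smaller $x$-content. Applying $D=\frac{d}{dx_1}+\frac{d}{dy_1}$ and using $D(\rho_2)=x_2$, $D(\rho_1)=1$, $D(g_{a,b})=\frac{d}{dy_1}g_{a,b}$, I would impose $D(f)\equiv 0\pmod{(x_1-y_1)}$ through the defining equations (\ref{**}) and (\ref{sup*}) of Proposition \ref{cond}. A useful structural point is that $s_1=p-1$ forces $t=i_1+j_1=1+j_1\equiv 0\pmod p$ in the leading block, so the leading coefficient $a_{1,1,j_1,j_2}$ lies in an $s=0$ block in which it is unconstrained; the equations then serve only to fix the correction coefficients $\alpha_{a,b},\beta_{c,d}$ in terms of $s_1,s_2$, exactly as the factors $\binom{s_i+1}{\cdot}$ arise in Lemma \ref{6.3}.

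The main obstacle I anticipate is solving and checking this coefficient system, because $\frac{d}{dx_1}$ breaks the $x$-symmetry: it sends $\rho_2$ to $x_2$ rather than to a multiple of $\rho_1$, so the non-symmetric remainder must be shown to cancel against $(x_1-y_1)f'$ across all blocks in which $\rho_2 g_{j_1,j_2}$, the $\rho_1 g_{a,b}$, and the $g_{c,d}$ simultaneously occur. I would organize the verification block by block; freezing $i_2$ and $j_2$ reduces each block to the single-variable recurrences of Proposition \ref{Kbasis}, while the ties between blocks are supplied by the symmetries (\ref{sym}). I expect only finitely many correction terms to be needed, as in Lemmas \ref{6.2} and \ref{6.3}, and I would finally handle the degenerate case $j_1=j_2$ (which forces $s_2=s_1=p-1$) on its own, where several $g_{a,b}$ coincide and the coefficient system collapses, mirroring the $j_1=j_2$ subcase of Lemma \ref{6.3}.
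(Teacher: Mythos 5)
Your first branch ($0<j_1<p-1$, $j_2=0$) is correct and coincides with the paper's proof, which in fact disposes of \emph{every} height-one pair in a single line: $x_1x_2y^{(j_1,j_2)}$ is claimed to be the leading term of the element $E_{2+j_1+j_2}$ built in Section 3 from the characteristic-zero generator $C_t$ (the paper writes $E_{2+j_1}$). The gap is in your second branch $s_1=p-1$, which contains every height-one case with $j_2>0$: there you never produce the required supersymmetric element. You write an ansatz with undetermined coefficients and explicitly defer ``solving and checking this coefficient system''; but the solvability of that system \emph{is} the assertion of the lemma, so for this branch nothing has been proved.

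Worse, the ansatz itself is provably too small, so the deferred verification would fail. All your corrections have strictly smaller $x$-content, namely $(x_1+x_2)g_{a,b}$ with $a+b=j_1+j_2+1$ and $g_{c,d}$ with $c+d=j_1+j_2+2$; but the constructions of Lemmas \ref{6.2} and \ref{sest*} also require corrections with the \emph{same} $x$-part $x_1x_2$ and the same $y$-degree, the sums $\sum_{j\ge 1}(-1)^j\binom{s_2+j}{j}\,x_1x_2\,g_{j_2+j,\,j_1-j}$, and these are unavoidable. Concretely, take $p=3$ and $(j_1,j_2)=(2,0)$, a height-one pair in your second branch. If $F$ is supersymmetric in $Div[x_1,x_2,y_1,y_2]$ with leading term $x_1x_2y_1^{(2)}$, then the coefficient of $x_2^{(1)}$ in $F$ is a supersymmetric element of $Div[x_1,y_1,y_2]$ (Proposition \ref{cond}; this is the mechanism of Lemma \ref{connect}) whose leading term is $x_1y_1^{(2)}$, since any higher term of this restriction would come from a term of $F$ higher than $x_1x_2y_1^{(2)}$. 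Writing its coefficients as $a_{i_1,j_1,j_2}$, equation (\ref{**}) with frozen $j_2=2$ gives $a_{1,0,2}+a_{0,1,2}=0$, and with frozen $j_2=1$ gives $a_{2,0,1}+2a_{1,1,1}+a_{0,2,1}=0$, where $a_{2,0,1}=0$ because $x_1^{(2)}y_2$ exceeds the leading term; since $a_{1,0,2}=1$ and $a_{0,2,1}=a_{0,1,2}$ by the symmetry (\ref{sym}), these force $a_{0,2,1}=-1$ and then $a_{1,1,1}=-1$. Hence $F$ must contain $x_1x_2y_1y_2$ with coefficient $-1$, a term of the shape $x_1x_2\,g_{1,1}$ that your ansatz sets to zero, so your linear system is inconsistent. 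Completing your route requires enlarging the ansatz by the same-degree terms $x_1x_2\,g_{a,b}$ with $a+b=j_1+j_2$, $(a,b)<(j_1,j_2)$, and then exhibiting a solution --- which is essentially the proof of Lemma \ref{sest*} specialized to $i_1=i_2=1$. Note finally that falling back on $E_{2+j_1+j_2}$ for this branch, as the paper does, still requires identifying its leading term, and that identification is not automatic: for $p=3$ both $(8,1)$ and $(5,4)$ are height-one pairs of degree $9$, while $E_{11}$ has the single leading term $x_1x_2y^{(8,1)}$; so for pairs like $(5,4)$ a direct construction of the kind you attempt is genuinely needed, which makes completing it, rather than deferring it, essential.
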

\begin{proof}
The monomial $x_1x_2y^{(j_1,j_2)}$ is the leading term of the supersymmetric element $E_{2+j_1}$. 
\end{proof}

If $j_1=p-1$, then the claim of Lemma  \ref{pat*} is a particular case of Lemma \ref{sest*}.

\begin{lm}\label{sest*}
Every symmetrized monomial $x^{(i_1,i_2)}y^{(j_1,j_2)}$, where $r_1=r_2=1$, $k_2>0$ and $s_1=p-1$, is marked in 
$Div[x_1,x_2,y_1,y_2]$.
\end{lm}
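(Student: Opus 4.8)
The plan is to exhibit a single supersymmetric element of $Div[x_1,x_2,y_1,y_2]$ whose leading term is $x^{(i_1,i_2)}y^{(j_1,j_2)}$; by definition this proves the monomial is marked. Throughout I keep the notation $i_u=pk_u+r_u$ and $j_v=pl_v+s_v$, so the hypotheses read $r_1=r_2=1$, $s_1=p-1$ and $k_2>0$ (whence also $i_1\geq i_2\geq p+1$ and $k_1>0$). I build the element in the same spirit as Propositions \ref{jeden} and \ref{cond} and Lemmas \ref{6.2}, \ref{6.3}: starting from the product $f_{i_1,i_2}\,g_{j_1,j_2}$ of the $x$-symmetric function $f_{i_1,i_2}$ and the $y$-symmetric function $g_{j_1,j_2}$, which already carries the required leading term with coefficient $1$, I add lower correction terms obtained by ``pushing'' the unit remainders $r_1=r_2=1$ off the $x$-exponents onto the $y$-exponents.

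The decisive feature here, in contrast with Proposition \ref{jeden}, is that $s_1=p-1$ forces a $p$-adic carry at the very first push: moving one unit from $x_1$ turns the pair $(i_1,j_1)=(pk_1+1,\,pl_1+p-1)$ into $(pk_1,\,p(l_1+1))$, so both resulting exponents become divisible by $p$. Consequently the correction terms are governed by the divisible-by-$p$ alternating pattern of Lemma \ref{p} rather than by the carry-free multinomial weights of Proposition \ref{jeden}, and the height-$1$ hypothesis $s_1=p-1$ is exactly what makes this carry clean. I would therefore assemble the element as a finite alternating combination of products $f_{a_1,a_2}\,g_{b_1,b_2}$ indexed by how many units (zero, one or two) are pushed from $\{x_1,x_2\}$ into $\{y_1,y_2\}$ and into which slot, assigning to each the multinomial weight dictated by the height-$1$ correction of Lemma \ref{6.2} combined with the alternating signs of Lemma \ref{p}; the terms coming from $x_2$ use $k_2>0$ to guarantee that $x_2^{(i_2-1)}=x_2^{(pk_2)}$ is a genuine lower monomial and that no further unexpected carries occur.

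To verify supersymmetry I would invoke Proposition \ref{cond} and check the defining equations (\ref{**}), (\ref{sup*}) for the $(x_1,y_1)$-pair, with $i_2,j_2$ frozen, together with the symmetry relations (\ref{sym}), restricted to the support of the candidate. As in Proposition \ref{jeden}, the point is that every defining equation meeting this support contains exactly two of its nonzero coefficients; setting all variables outside the support to zero collapses the system into a family of two-term relations, each of which reduces to a multinomial splitting identity of the Vandermonde type $\binom{s_1-l}{i}\binom{s_1}{s_1-l}=\binom{l+i}{i}\binom{s_1}{l+i}$ already exploited in Lemma \ref{blow}. Checking that the weights chosen above satisfy all of these simultaneously, and that no coefficient higher than the leading term is forced to be nonzero, completes the argument.

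The main obstacle I anticipate is the bookkeeping forced by the simultaneous action of $\Sigma_2$ on $x_1,x_2$ and of $\Sigma_2$ on $y_1,y_2$: a single pushed monomial can have several symmetrizations, some of which coincide with, or collide against, symmetrizations of other pushed monomials, so one must track the support carefully and confirm that the two-term equations are \emph{globally} consistent rather than merely consistent one at a time. It is precisely the hypotheses $s_1=p-1$ (clean carry) and $k_2>0$ (which separates this case from the $E_t$-case handled in Lemma \ref{pat*} and prevents the pushed monomials from degenerating into fewer variables) that keep this bookkeeping finite and the binomial identities exact; establishing this global consistency is the crux of the proof.
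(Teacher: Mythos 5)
Your central structural claim --- that because $s_1=p-1$ a push of one unit from $x_1$ onto $y_1$ produces the ``clean'' $p$-divisible pair $(pk_1,\,p(l_1+1))$, so that the corrections are governed by the alternating pattern of Lemma \ref{p} --- is exactly backwards, and it destroys the leading-term property you need. Suppose a supersymmetric element has a nonzero coefficient at $x_1^{(pk_1)}x_2^{(i_2)}y_1^{(p(l_1+1))}y_2^{(j_2)}$. Since $i_1+j_1=p(k_1+l_1+1)$, this variable lies in the chain of equations of type (\ref{sup*}) for the pair $(x_1,y_1)$ with $i_2,j_2$ frozen, and that chain forces all coefficients $a_{t-pr,\,i_2,\,pr,\,j_2}$, $0\le r\le k_1+l_1+1$, to be equal up to sign; in particular $a_{pk_1+p,\,i_2,\,pl_1,\,j_2}\neq 0$, and $x_1^{(pk_1+p)}x_2^{(i_2)}y_1^{(pl_1)}y_2^{(j_2)}$ is lexicographically \emph{higher} than $x^{(i_1,i_2)}y^{(j_1,j_2)}$ because $pk_1+p>pk_1+1=i_1$. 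So no element with the required leading term can contain any monomial in which a unit has been pushed onto $y_1$. The actual role of $s_1=p-1$ is the opposite of what you describe: it makes $i_1+j_1$ and $i_2+j_1$ divisible by $p$, so the leading variable meets no $(x_u,y_1)$-equation at all (for those pairs only (\ref{sup*})-type equations occur, and they involve only $p$-divisible exponents, which $i_1,i_2$ are not); all interaction is through $y_2$.

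There is a second, independent failure: your support, ``products $f_{a_1,a_2}g_{b_1,b_2}$ indexed by how many units are pushed from $\{x_1,x_2\}$ into $\{y_1,y_2\}$,'' omits the redistribution terms $f_{i_1,i_2}\,g_{j_1-j,\,j_2+j}$, $1\le j\le p-1-s_2$, which move units \emph{within} the $y$-variables and are indispensable. Concretely, the $(x_1,y_2)$-equation containing the leading variable forces $a_{i_1-1,i_2,j_1,j_2+1}=-(s_2+1)$; that variable then sits in the $(x_1,y_1)$-equation of type (\ref{**}) with $s=p-1$, whose only other potential support member is the redistribution variable $a_{i_1,i_2,j_1-1,j_2+1}$ (coefficient $\binom{p-1}{p-2}\equiv -1$). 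With your push-only support that second variable is absent, and the equation forces $a_{i_1-1,i_2,j_1,j_2+1}=0$, a contradiction; so no choice of weights on your family is supersymmetric. The paper's proof takes the support $f_{i_1,i_2}\,g_{j_1-j,\,j_2+j}$ (weights $(-1)^j\binom{s_2+j}{j}$) together with the one-unit pushes $(f_{i_1-1,i_2}+f_{i_1,i_2-1})\,g_{j_1-j,\,j_2+j+1}$ (weights $(-1)^{j+1}(s_2+j+1)\binom{s_2+j}{j}$), all governed by $s_2$ exactly as in Lemma \ref{6.2}; there are no two-unit pushes $f_{i_1-1,i_2-1}$ here at all (those belong to Lemma \ref{sedem*}), and Lemma \ref{p} plays no role. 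So the gap is not bookkeeping: the family of monomials you propose to combine is the wrong one.
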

\begin{proof}
We modify the proof of Lemma \ref{6.2}. To start, we recall the proof of Lemma \ref{6.2} restated for $Div[x_1,y_1,y_2]$.

If $s_2=p-1$, then $a_{i_1,j_1,j_2}$ does not appear in any defining equation and is therefore marked.
If $s_2\neq p-1$, then we have a series of $p-1-s_2$ pairs of equations
\[a_{s_2+1,j_s-s_s,j_1} + \ldots + (s_2+1) a_{1,j_2,j_1} + a_{0,j_2+1,j_1}=0\]
\[a_{p-1,j_1-p+1,j_2+1} + \ldots + (p-1) a_{1,j_1-1,j_2+1} + 
a_{0,j_1,j_2+1}=0,\]
\[a_{s_2+2,j_2-s_2,j_1-1} + \ldots + (s_2+2) a_{1,j_2+1,j_1-1} 
+ a_{0,j_2+2,j_1-1}=0\]
\[a_{p-2,j_1-p+1,j_2+2,j_1} + \ldots + (p-2) a_{1,j_1-2,j_2+2} 
+ a_{0,j_1-1,j_2+2}=0,\]
\[\ldots\]
\[\ldots\]
\[\begin{aligned}
&a_{p-1,j_2-s_2,j_1-p+s_2+2} + \ldots + (p-1) a_{1,j_2+p-s_2-2,j_1-p+s_2+2} \\
&+ a_{0,j_2+p-s_2-1,j_1-p+s_2+2}=0
\end{aligned}\]
\[\begin{aligned}
&a_{s_2+1,j_1-p+1,j_2+p-s_2-1} + \ldots + (s_2+1) a_{1,j_1-p+s_2+1,j_2+p-s_2-1} \\
&+ a_{0,j_1-p+s_2+2,j_2+p-s_2-1}=0,
\end{aligned}\]
that are all the defining equations involving variables 
\[a_{1,j_2,j_1}, a_{0,j_2+1, j_1},\]
\[a_{1,j_2+1,j_1-1}, \ldots, a_{1,j_2+p-s_2-1,j_1-p+s_2+1}\] and 
\[a_{0,j_1-1, j_2+2}, \ldots, a_{0,j_1-p+s_2+2,j_2+p-s_2-1}.\]

When we set all variables not listed above to zero and  $a_{1,j_1,j_2}=1$, the values of the remaining variables are
\[a_{0,j_2+1, j_1}=-(s_2+1),\]
\[a_{1,j_2+j,j_1-j}=(-1)^j\binom{s_2+j}{j}\]
for $j=1, \ldots, p-1-s_2$ and 
\[a_{0,j_1-j,j_2+j+1}=(-1)^{j+1}(s_2+j+1)\binom{s_2+j}{j}\] 
for $j=1, \ldots, p-2-s_2$.

Then
\[\begin{aligned}
&x_1 g_{j_1, j_2}-(s_2+1)g_{j_1,j_2+1}+
\sum_{j=1}^{p-1-s_2} (-1)^j\binom{s_2+j}{j} x_1 g_{j_2+j,j_1-j} \\
&+ \sum_{j=1}^{p-2-s_2} (-1)^{j+1}(s_2+j+1)\binom{s_2+j}{j} g_{j_1-j,j_2+j+1},
\end{aligned}\]
is a supersymmetric element that has the leading term $x_1y^{(j_1,j_2)}$, showing that $a_{1,j_1,j_2}$ is marked.

Now we make necessary adjustments for $Div[x_1,x_2,y_1,y_2]$.

If $s_2=p-1$, then $a_{i_1,i_2,j_1,j_2}$ does not appear in any defining equation and is therefore marked.
If $s_2\neq p-1$, then we have a series of $p-1-s_2$ quadruples of equations
\[a_{i_1+s_2,i_2,j_2-s_2,j_1} + \ldots + (s_2+1) a_{i_1,i_2,j_2,j_1} + a_{i_1-1,i_2,j_2+1,j_1}=0\]
\[a_{i_1,i_2+s_2,j_2-s_2,j_1} + \ldots + (s_2+1) a_{i_1,i_2,j_2,j_1} + a_{i_1,i_2-1,j_2+1,j_1}=0\]
\[a_{i_1+p-2,i_2,j_1-p+1,j_2+1} + \ldots + (p-1) a_{i_1,i_2,j_1-1,j_2+1} + a_{i_1-1,i_2,j_1,j_2+1}=0\]
\[a_{i_1,i_2+p-2,j_1-p+1,j_2+1} + \ldots + (p-1) a_{i_1,i_2,j_1-1,j_2+1} + a_{i_1,i_2-1,j_1,j_2+1}=0,\]
\[a_{i_1+s_2+1,i_2,j_2-s_2,j_1-1} + \ldots + (s_2+2) a_{i_1,i_2,j_2+1,j_1-1} + a_{i_1-1,i_2,j_2+2,j_1-1}=0\]
\[a_{i_1,i_2+s_2+1,j_2-s_2,j_1-1} + \ldots + (s_2+2) a_{i_1,i_2,j_2+1,j_1-1} + a_{i_1,i_2-1,j_2+2,j_1-1}=0\]
\[a_{i_1+p-3,i_2,j_1-p+1,j_2+2} + \ldots + (p-2) a_{i_1,i_2,j_1-2,j_2+2} + a_{i_1-1,i_2,j_1-1,j_2+2}=0\]
\[a_{i_1,i_2+p-3,j_1-p+1,j_2+2} + \ldots + (p-2) a_{i_1,i_2,j_1-2,j_2+2} + a_{i_1,i_2-1,j_1-1,j_2+2}=0,\]
\[\ldots\]
\[\ldots\]
\[\begin{aligned}
&a_{i_1+p-2,i_2,j_2-s_2,j_1-p+s_2+2} + \ldots + (p-1) a_{i_1,i_2,j_2+p-s_2-2,j_1-p+s_2+2} \\
&+ a_{i_1-1,i_2,j_2+p-s_2-1,j_1-p+s_2+2}=0
\end{aligned}\]
\[\begin{aligned}
&a_{i_1,i_2+p-2,j_2-s_2,j_1-p+s_2+2} + \ldots + (p-1) a_{i_1,i_2,j_2+p-s_2-2,j_1-p+s_2+2} \\
&+ a_{i_1,i_2-1,j_2+p-s_2-1,j_1-p+s_2+2}=0
\end{aligned}\]
\[\begin{aligned}
&a_{i_1+s_2,i_2,j_1-p+1,j_2+p-s_2-1} + \ldots + (s_2+1) a_{i_1,i_2,j_1-p+s_2+1,j_2+p-s_2-1} \\
&+ a_{i_1-1,i_2,j_1-p+s_2+2,j_2+p-s_2-1}=0
\end{aligned}\]
\[\begin{aligned}
&a_{i_1,i_2+s_2,j_1-p+1,j_2+p-s_2-1} + \ldots + (s_2+1) a_{i_1,i_2,j_1-p+s_2+1,j_2+p-s_2-1} \\
&+ a_{i_1,i_2-1,j_1-p+s_2+2,j_2+p-s_2-1}=0,
\end{aligned}\]
that are all the defining equations involving variables 
\[a_{i_1,i_2,j_2,j_1}, a_{i_1-1,i_2,j_2+1, j_1}, a_{i_1,i_2-1,j_2+1, j_1},\]
\[a_{i_1,i_2,j_2+1,j_1-1}, \ldots, a_{i_1,i_2,j_2+p-s_2-1,j_1-p+s_2+1},\] 
\[a_{i_1,i_2-1,j_1-1, j_2+2}, \ldots, a_{i_1,i_2-1,j_1-p+s_2+2,j_2+p-s_2-1}\] and 
\[a_{i_1-1,i_2,j_1-1, j_2+2}, \ldots, a_{i_1-1,i_2,j_1-p+s_2+2,j_2+p-s_2-1}.\]

When we set all variables not listed above to zero and $a_{i_1,i_2,j_1,j_2}=1$, then values of the remaining variables are
\[a_{i_1,i_2-1,j_2+1, j_1}=a_{i_1-1,i_2,j_2+1,j_1}=-(s_2+1),\]
\[a_{i_1,i_2,j_2+j,j_1-j}=(-1)^j\binom{s_2+j}{j}\]
for $j=1, \ldots, p-1-s_2$ and 
\[a_{i_1,i_2-1,j_1-j,j_2+j+1}=a_{i_1-1,i_2,j_1-j,j_2+j+1}=(-1)^{j+1}(s_2+j+1)\binom{s_2+j}{j}\] 
for $j=1, \ldots, p-2-s_2$.

Then
\[\begin{aligned}
&f_{i_1,i_2} g_{j_1, j_2}-(s_2+1)(f_{i_1-1,i_2}+f_{i_1,i_2-1})g_{j_1,j_2+1}+\sum_{j=1}^{p-1-s_2} (-1)^j\binom{s_2+j}{j} f_{i_1,i_2} g_{j_2+j,j_1-j} \\
&+ \sum_{j=1}^{p-2-s_2} (-1)^{j+1}(s_2+j+1)\binom{s_2+j}{j} (f_{i_1-1,i_2}+f_{i_1,i_2-1})g_{j_1-j,j_2+j+1},
\end{aligned}\]
is a supersymmetric element that has the leading term $x_1^{(i_1)}x_2^{(i_2)}y_1^{(j_1)}y_2^{(j_2)}$, showing that $a_{1,j_1,j_2}$ is marked.
\end{proof}

\begin{lm}\label{sedem*}
Every symmetrized monomial $x^{(i_1,i_2)}y^{(j_1,j_2)}$, where $r_1=r_2=2$, is marked in 
$Div[x_1,x_2,y_1,y_2]$.
\end{lm}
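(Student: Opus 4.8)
The plan is to write down an explicit supersymmetric element of $Div[x_1,x_2,y_1,y_2]$ having $x^{(i_1,i_2)}y^{(j_1,j_2)}$ as its leading term, by fusing the two techniques already developed: the second--order $x$--correction used for $x_1^{(2)}$ in Lemma \ref{6.3}, and the $x$--symmetrization via the functions $f_{i_1,i_2}$ and $f_{i_1-1,i_2}+f_{i_1,i_2-1}$ used for two $x$--variables in Lemma \ref{sest*}. Write $i_1=pk_1+2$, $i_2=pk_2+2$ and $j_v=pl_v+s_v$. In Lemma \ref{6.3} the leading factor $x_1^{(2)}$ is corrected by a one--step--down term and by a two--step--down term in $x$; the analogue here replaces $x_1^{(2)}$ by $f_{i_1,i_2}$, the first--order correction by the symmetrized first derivative $f_{i_1-1,i_2}+f_{i_1,i_2-1}$, and the second--order correction by a suitable combination of $f_{i_1-2,i_2}$, $f_{i_1-1,i_2-1}$, and $f_{i_1,i_2-2}$. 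All of these are nonzero precisely because $r_1=r_2=2\geq 2$, so subtracting $1$ or $2$ from the exponents produces no $p$--adic carry.

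In the generic case $j_1>j_2$ (and $i_1>i_2$) the candidate element is
\[
f_{i_1,i_2}\,g_{j_1,j_2}
-\tfrac{s_1+1}{2}(f_{i_1-1,i_2}+f_{i_1,i_2-1})\,g_{j_1+1,j_2}
-\tfrac{s_2+1}{2}(f_{i_1-1,i_2}+f_{i_1,i_2-1})\,g_{j_1,j_2+1}
+c\,(f_{i_1-2,i_2}+f_{i_1-1,i_2-1}+f_{i_1,i_2-2})\,g_{j_1+1,j_2+1},
\]
with $c$ proportional to $(s_1+1)(s_2+1)$, mirroring the coefficients of Lemma \ref{6.3}. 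To verify supersymmetry I would invoke Proposition \ref{cond}: only finitely many defining equations of types (\ref{**}) and (\ref{sup*}) involve the coefficients occurring above, and, exactly as in Lemmas \ref{6.3} and \ref{sest*}, each of these equations reduces to a short relation among those coefficients. I would then check that the displayed coefficients satisfy every such equation; the relations among the $y$--indices reproduce the computation of Lemma \ref{6.3}, while the paired sum $f_{i_1-1,i_2}+f_{i_1,i_2-1}$ and the three--term second--order combination account for the symmetrization in $x_1,x_2$, just as $f_{i_1-1,i_2}+f_{i_1,i_2-1}$ did in Lemma \ref{sest*}.

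I would then record the degenerate configurations in which indices coincide, namely $j_1=j_2$ and/or $i_1=i_2$. As already seen in Lemma \ref{6.3}, where the equation pairs $(A_0),(B_0)$ and $(A_1),(B_1)$ collapse and force the renormalized coefficient $\binom{s+2}{2}$, each coincidence merges some defining equations and alters the binomial coefficients; in every such case I would write down the correspondingly renormalized element (for instance $f_{i_1,i_2}\,g_{j,j}-c'(f_{i_1-2,i_2}+f_{i_1-1,i_2-1}+f_{i_1,i_2-2})\,g_{j+2,j}$ when $j_1=j_2=j$) and verify it against the merged system. The construction is essentially uniform in the height of $(j_1,j_2)$: when a residue $s_v$ equals $p-1$ the coefficient $\tfrac{s_v+1}{2}$ vanishes modulo $p$ and the corresponding correction simply drops out, which is exactly the behaviour needed at the height--one boundary.

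The main obstacle is the simultaneous bookkeeping on both sides. Unlike Lemma \ref{6.3}, where a single variable yields one chain of corrections, here the second--order reduction distributes over the three functions $f_{i_1-2,i_2}$, $f_{i_1-1,i_2-1}$, $f_{i_1,i_2-2}$, and I must confirm that the coefficient attached to each of them is consistent across all defining equations in which it occurs -- in particular that the overlaps caused by coincidences $i_1=i_2$ or $j_1=j_2$ do not over--determine the linear system. Establishing this compatibility, rather than guessing the element, is where the genuine effort lies; once the coefficients are shown to solve the finite defining subsystem, Proposition \ref{cond} immediately certifies that the element is supersymmetric with leading term $x^{(i_1,i_2)}y^{(j_1,j_2)}$, whence the monomial is marked.
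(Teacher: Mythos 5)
Your plan follows the same strategy as the paper's proof of this lemma (an explicit short combination of products $f_{\ast,\ast}g_{\ast,\ast}$, to be checked against the defining equations of Proposition \ref{cond}), but your element is not the paper's element: the paper's second--order correction is the single term $\tfrac{(s_1+1)(s_2+1)}{2}f_{i_1-1,i_2-1}g_{j_1+1,j_2+1}$, whereas you propose $c\,(f_{i_1-2,i_2}+f_{i_1-1,i_2-1}+f_{i_1,i_2-2})g_{j_1+1,j_2+1}$ with one constant $c$. The step you defer --- ``once the coefficients are shown to solve the finite defining subsystem'' --- is exactly where the proposal breaks, because no choice of $c$ solves that subsystem. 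Two of the defining equations already conflict. The equation $(B_{11})$ (frozen slots $i_2$ and $j_1+1$) meets your support only in the terms $(s_2+1)\,a_{i_1-1,i_2,j_1+1,j_2}+a_{i_1-2,i_2,j_1+1,j_2+1}$, forcing $c=\tfrac{(s_1+1)(s_2+1)}{2}$; but the equation with frozen slots $i_2-1$ and $j_2+1$ (one the paper does not list either) meets your support only in $\binom{s_1+2}{2}a_{i_1,i_2-1,j_1,j_2+1}+(s_1+2)\,a_{i_1-1,i_2-1,j_1+1,j_2+1}$, forcing $c=\tfrac{(s_1+1)(s_2+1)}{4}$. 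Since $\tfrac12\neq\tfrac14$ in $K$, these are incompatible whenever $s_1,s_2\neq p-1$; for instance $p=5$, $(i_1,i_2,j_1,j_2)=(7,2,2,1)$ gives a concrete violation.

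The failure is not repaired by giving the three second--order terms independent coefficients, because the system does not close on your support at all: in the equation with frozen slots $i_1-2$ and $j_1+1$ (summed total $i_2+j_2+1$), every term except $\binom{s_2+3}{2}\,a_{i_1-2,i_2,j_1+1,j_2+1}$ lies outside your support, so it forces that coefficient to vanish, contradicting the nonzero value demanded by $(B_{11})$. The source of the trouble is that $i_1-2\equiv 0 \pmod p$, so the new monomials you introduce create new ``frozen--slot'' equations that drag in monomials such as $x^{(i_1-2,i_2-1)}y^{(j_1+2,j_2+1)}$, and the cascade must be controlled rather than assumed finite and short. A minimal illustration: for $p=3$ and the monomial $x^{(2,2)}$ (so $j_1=j_2=0$), your degenerate formula $f_{2,2}-c'(2f_{2,0}+f_{1,1})g_{2,0}$ satisfies no choice of $c'$, while an actual supersymmetric element with this leading term is $f_{2,2}-f_{2,0}g_{2,0}+g_{2,2}$, whose corrections (dropping an $x$--exponent by $2$, raising $y$--exponents by $2$) have a shape entirely outside your ansatz. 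You should also know that this criticism is not avoided by copying the paper: the element displayed in the paper's own proof violates the very equations $(A_{11})$ and $(B_{11})$ it lists (its only candidate for the term $a_{i_1-2,i_2,j_1+1,j_2+1}$ is zero), so the compatibility check you correctly identified as ``where the genuine effort lies'' is a real obstruction, and neither your ansatz nor the printed one discharges it.
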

\begin{proof}
We modify the proof of Lemma \ref{6.3}.

The statement is true if $(j_1,j_2)$ has the height $h=1$ by Lemma \ref{sest*}. 

Consider the following defining equations

\begin{equation}\tag{$A_{1t}$}
\begin{aligned}
&a_{i_1+s_1-t,i_2,j_1-s_1,j_2+t} + \ldots +\binom{s_1+2-t}{2}a_{i_1,i_2,j_1-t,j_2+t}\\
&+(s_1+2-t)a_{i_1-1,i_2,j_1+1-t,j_2+t}+a_{i_1-2,i_2,j_1+2-t,j_2+t}=0
\end{aligned}
\end{equation}
for $t=0, 1$;

\begin{equation}\tag{$A_{2t}$}
\begin{aligned}
&a_{i_1,i_2+s_1-t,j_1-s_1,j_2+t} + \ldots +\binom{s_1+2-t}{2}a_{i_1,i_2,j_1-t,j_2+t}\\
&+(s_1+2-t)a_{i_1,i_2-1,j_1+1-t,j_2+t}+a_{i_1,i_2-2,j_1+2-t,j_2+t}=0
\end{aligned}
\end{equation}
for $t=0,1$;

\begin{equation}\tag{$B_{1t}$}
\begin{aligned}
&a_{i_1+s_2-t,i_2,j_1+t,j_2-s_2} + \ldots +\binom{s_2+2-t}{2}a_{i_1,i_2,j_1+t,j_2-t}\\
&+(s_2+2-t)a_{i_1-1,i_2,j_1+t,j_2+1-t}+a_{i_1-2,i_2,j_1+t,j_2+2-t}=0
\end{aligned}
\end{equation}
for $t=0,1$;

\begin{equation}\tag{$B_{t2}$}
\begin{aligned}
&a_{i_1,i_2+s_2-t,j_1+t,j_2-s_2} + \ldots +\binom{s_2+2-t}{2}a_{i_1,i_2,j_1+t,j_2-t}\\
&+(s_2+2-t)a_{i_1,i_2-1,j_1+t,j_2+1-t}+a_{i_1,i_2-2,j_1+t,j_2+2-t}=0
\end{aligned}
\end{equation}
for $t=0,1$;

If $j_1>j_2$, then 
\[\begin{aligned}&f_{i_1,i_2}g_{j_1,j_2} - \frac{s_1+1}{2} f_{i_1-1,i_2}g_{j_1+1,j_2} - 
\frac{s_1+1}{2} f_{i_1,i_2-1}g_{j_1+1,j_2} \\
&- \frac{s_2+1}{2} f_{i_1-1,i_2}g_{j_1+1,j_2} -\frac{s_2+1}{2} f_{i_1,i_2-1}g_{j_1,j_2+1} \\
&+\frac{(s_1+1)(s_2+1)}{2} f_{i_1-1,i_2-1}g_{j_1+1,j_2+1}
\end{aligned}\]
is a supersymmetric element that has the leading term $x^{(i_1,i_2)}y^{(j_1,j_2)}$, showing that $a_{i_1i_2j_1j_2}$ is marked.
This is easy to verify since the variables corresponding to summands in this expression appear only in the 
defining equations $(A_{1t}), (A_{2t}), (B_{1t}), (B_{2t})$ for $t=0,1$.

If $j_1=j_2=j$, then $s_1=s_2=s$ and
\[f_{i_1,i_2}g_{j,j} - \binom{s+2}{2}f_{i_1-1,i_2-1}g_{j+2,j}\] 
is a supersymmetric element that has the leading term $x^{(i_1,i_2)}y^{(j,j)}$ showing that $a_{i_1i_2jj}$ is marked. 
This is easy to verify this since the variables corresponding to summands in this expression appear only in the 
defining equations $(A_{0t})=(B_{0t})$ and $(A_{1t})=(B_{1t})$. 
\end{proof}

\begin{rem}
The modification of the proofs of Lemmas \ref{sest*} and \ref{sedem*} from the proofs of Lemmas \ref{6.2} and \ref{6.3} by "doubling" the value of $r$ to 
$r_1=r_2$ shows how to proceed in general to build marked elements in $Div[x_1, \ldots, x_m,y_1,y_2]$ from those in $Div[x_1, y_1,y_2]$ in the case when $r_1=\ldots=r_m$.
It is not clear if the assumption $r_1=\ldots=r_m$ is really needed, but we require the modification only in those cases.
\end{rem}

\begin{pr}\label{styri}
The symmetrized monomials $x^{(i_1,i_2)}y^{(j_1,j_2)}$ such that 
\begin{itemize}
\item $p$ divides $i_1$ or $i_2$ and either $i_2>0$ or $j_1>0$; or
\item $r_1=1$ or $r_2=1$ and $(j_1,j_2)$ has height $h>1$; or 
\item $r_1>0$, $r_2=1$, $k_2>0$, $s_1\neq p-1$ and $(j_1,j_2)$ has height $h=1$
%\item $r_1=1$ and $r_2>1$; or $r_2=1$ and $k_2>0$;
%\item $i_2=1$, and $(j_1,j_2)$ is of height two;
\end{itemize}
are unmarked in $Div[x_1,x_2,y_1,y_2]$.
\end{pr}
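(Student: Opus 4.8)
The plan is to treat the three bullets separately, reducing the first two to results already in hand and attacking the third, which is the genuinely four-variable phenomenon, through the defining equations of Proposition~\ref{cond}.

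For the first bullet I would distinguish whether $j_1>0$. If $j_1>0$, then $p$ divides one of $i_1,i_2$ and Lemma~\ref{11} applies directly, so the monomial is unmarked. If $j_1=0$, symmetrization forces $j_2=0$, the monomial is purely even, and the assumptions ($i_2>0$ together with $p\mid i_1$ or $p\mid i_2$) are exactly the hypotheses of Corollary~\ref{cor2}; hence it is unmarked. For the second bullet I would use the symmetry $x_1\leftrightarrow x_2$ to reduce to $r_1=1$. Deleting the variable $x_2$ yields $x_1^{(i_1)}y^{(j_1,j_2)}$ in $Div[x_1,y_1,y_2]$, a monomial with $r_1=1$ and $(j_1,j_2)$ of height $h>1$, which is unmarked by Lemma~\ref{6.5}; Lemma~\ref{connect} then carries unmarkedness back to $Div[x_1,x_2,y_1,y_2]$. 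When $r_2=1$ one deletes $x_1$ and relabels instead.

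The third bullet is the heart of the matter. Here the height-one condition together with $s_1\ne p-1$ forces $0<j_1<p-1$ and $j_2=0$, so $s_1=j_1$, $l_1=0$, and the monomial is $x_1^{(i_1)}x_2^{(i_2)}y_1^{(j_1)}$ with $i_2=pk_2+1$ and $i_1\ge i_2>pk_2$. I would first stress why no reduction is available: both $x$-exponents are coprime to $p$, so deleting $y_2$ gives a monomial that is \emph{marked} in $Div[x_1,x_2,y_1]$ by the theorem of Section~4, while deleting $x_2$ gives a height-one monomial with $r_1>0$ that is \emph{marked} in $Div[x_1,y_1,y_2]$ by the theorem of Section~5. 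Thus Lemma~\ref{connect} is of no use, and unmarkedness must be extracted from equations that involve all four variables at once. The concrete plan is to assume $a:=a_{i_1,i_2,j_1,0}\ne0$ and to produce a strictly higher nonzero coefficient. The natural starting equation is the $(x_2,y_1)$-block instance of (\ref{**}), with $t=i_2+j_1=pk_2+(1+j_1)$, $s=1+j_1<p$ and $r=0$,
\[
\sum_{j=0}^{1+j_1}\binom{1+j_1}{j}\,a_{i_1,\;pk_2+1+j_1-j,\;j,\;0}=0 ,
\]
in which the coefficient of $a$ (the term $j=j_1$) is $\binom{1+j_1}{j_1}=1+j_1\not\equiv0\pmod p$, the terms with $j<j_1$ are all strictly higher than $a$ (their $x_2$-exponent $pk_2+1+j_1-j$ exceeds $i_2$), and the unique term with $j=1+j_1$, namely $a_{i_1,pk_2,1+j_1,0}$, is strictly lower.

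The remaining task, and the main obstacle, is to eliminate this single lower term. Its $x_2$-exponent $pk_2$ is divisible by $p$, so I would re-expand it through the $(x_1,y_1)$-block: that block raises the first $x$-exponent above $i_1$ and hence returns only coefficients strictly higher than $a$ together with further coefficients of the same shape $a_{\ast,pk_2,\ast,0}$ but with strictly larger $y_1$-exponent. Working on the slice of coefficients whose $x_2$-exponent equals $pk_2$ and ordering them by their $y_1$-exponent, each such equation expresses one slice coefficient through strictly higher (off-slice) coefficients and slice coefficients of larger $y_1$-exponent; since the $y_1$-exponent is bounded by the degree, the back-substitution terminates and expresses $a_{i_1,pk_2,1+j_1,0}$, and then $a$ itself, as a $K$-linear combination of coefficients strictly higher than $a$. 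Consequently $a\ne0$ forces one of these to be nonzero, which is exactly unmarkedness. I expect the only delicate point to be the bookkeeping of this elimination—arranging that every lower coefficient cancels and verifying that the binomial coefficients produced along the way are units modulo $p$ (this is where $0<j_1<p-1$, $r_2=1$ and $k_2>0$ enter, ruling out $p$-adic carries), exactly the style of computation already carried out in Lemmas~\ref{blow} and~\ref{6.2}.
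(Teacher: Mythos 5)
Your treatment of the first two bullets coincides with the paper's: the first follows from Lemma \ref{11} (when $j_1>0$) and Corollary \ref{cor2} (when $j_1=j_2=0$), the second from Lemma \ref{6.5} combined with Lemma \ref{connect}, and your starting equation for the third bullet --- the $(x_2,y_1)$-instance of (\ref{**}) with $t=i_2+j_1$, in which the only term lower than $a=a_{i_1,i_2,j_1,0}$ is $a_{i_1,pk_2,1+j_1,0}$ and the coefficient at $a$ is the unit $1+j_1$ --- is exactly the equation the paper uses. The gap lies entirely in how you propose to dispose of that one lower term.

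Your elimination scheme cannot work, for a structural and for a fundamental reason. Structurally: in the $(x_1,y_1)$-block with $x_2$-exponent frozen at $pk_2$ and $y_2$-exponent frozen at $0$, each coefficient appears in at most one defining equation (the blocks for different $r$ in (\ref{**}) involve disjoint sets of $y_1$-exponents), so there is no cascade of equations to back-substitute along. Indeed, the unique equation of that block containing $a_{i_1,pk_2,1+j_1,0}$ exists only when $r_1+1+j_1<p$ (otherwise this coefficient lies in no equation of the block and is a free variable there), and when it exists it runs over $y_1$-exponents $j=0,\ldots,r_1+1+j_1$, hence reintroduces lower coefficients $a_{i_1+1+j_1-j,pk_2,j,0}$ with $1+j_1<j\leq r_1+1+j_1$ (nonempty since $r_1>0$) which occur in no further equation of the block. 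Fundamentally: every coefficient and every equation in your scheme has $y_2$-exponent $0$, so your entire derivation takes place inside the defining system of $Div[x_1,x_2,y_1]$; this remains true even if you also allow $(x_2,y_1)$-block equations and the $x_1\leftrightarrow x_2$ symmetry. But, as you yourself observe, $x^{(i_1,i_2)}y_1^{(j_1)}$ is \emph{marked} in $Div[x_1,x_2,y_1]$ by Proposition \ref{jeden} (both $r_1>0$ and $r_2=1$ are prime to $p$), so there is a supersymmetric element of $Div[x_1,x_2,y_1]$ having it as leading term, i.e., a simultaneous solution of every equation you are allowed to use with $a=1$ and all higher coefficients equal to $0$. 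Hence no linear-algebraic consequence of those equations can express $a$ through strictly higher coefficients: any correct argument must use equations in which the $y_2$-exponent genuinely varies. That is precisely what the paper does: it applies Lemma \ref{blow} to the supersymmetric element of $Div[x_1,x_2,y_2]$ obtained by freezing the $y_1$-exponent at $j_1+1$ (the hypotheses hold because $p$ divides both the $x_2$-exponent $pk_2$ and the $y_2$-exponent $0$, and $i_1\geq p$ since $k_2>0$), obtaining $a_{i_1,i_2-1,j_1+1,0}=a_{i_1+i_2-1,0,j_1+1,0}$, a coefficient strictly higher than $a$; substituting this into your starting equation finishes the proof. Ironically, Lemma \ref{blow}, which you cite only as a stylistic model, is exactly the missing tool.
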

\begin{proof}
The first part of the statement follows from Lemma \ref{11} and Corollary \ref{cor2}.
The second part follows from Lemmas \ref{6.5} and \ref{connect}.

For the last part, take $(j_1,j_2)$ of the form $(j_1,0)$, where $0\leq j_1<p-1$ and consider the defining equation

\[a_{i_1,i_2+s_1,j_1-s_1,0}+ \ldots + (s_1+1)a_{i_1,i_2,j_1,0}+a_{i_1,i_2-1,j_1+1,0}=0.\]
By Lemma \ref{blow}, we have $a_{i_1,i_2-1,j_1+1,0}=a_{i_1+i_2-1,0,j_1+1,0}$. Since $a_{i_1+i_2-1,0,j_1+1,0}$ and all variables
$a_{i_1,i_2+s_1-i,j_1-s_1+i,0}$ for $i=0, \ldots, s_1-1$ are higher than $a_{i_1,i_2,j_1,0}$ we conclude that $a_{i_1,i_2,j_1,0}$ is unmarked.
\end{proof}

\begin{theorem}
The algebra $\mathcal{S}$ of all supersymmetric elements in $Div[x_1,x_2,y_1,y_2]$ is generated by elements $S(M)$, where $M$ is one of the symmetrized monomials 
\begin{itemize}
\item{$x_1$};
\item{$x_1^{(p^s)}$ for some $s>0$};
\item{$x_1x_2y^{(j_1,j_2)}$, where $(j_1,j_2)$ has height $h=1$;}
\item{$x^{(i_1,i_2)}y^{(j_1,j_2)}$, where $r_1=r_2=1$, $k_2>0$ and $s_1=p-1$;}
\item{$x^{(i_1,i_2)}y^{(j_1,j_2)}$, where $r_1=r_2=2$.}
\end{itemize}
\end{theorem}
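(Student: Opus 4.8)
The plan is to follow verbatim the two-step template used for the theorems closing Sections 4 and 5. First I would produce, for each symmetrized monomial $M$ in the generating list, a supersymmetric element having $M$ as its leading term, thereby showing that every listed monomial is marked; then I would show that all remaining symmetrized monomials are unmarked; and finally I would invoke Lemma \ref{span3} together with Proposition \ref{generators} to pass from leading terms to an actual generating set.

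For the marked side, the individual building blocks are already available. The monomial $x_1$ is the leading term of $E_1$; $x_1^{(p^s)}$ is marked by Lemma \ref{p} for $s\geq 2$, and $x_1^{(p)}$ is the leading term of the element furnished by the same construction with $k_1=1$. Lemma \ref{pat*} marks $x_1x_2y^{(j_1,j_2)}$ when $(j_1,j_2)$ has height $h=1$, Lemma \ref{sest*} marks $x^{(i_1,i_2)}y^{(j_1,j_2)}$ when $r_1=r_2=1$, $k_2>0$ and $s_1=p-1$, and Lemma \ref{sedem*} marks $x^{(i_1,i_2)}y^{(j_1,j_2)}$ when $r_1=r_2=2$. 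Since $S(M_1)\cdots S(M_t)$ has leading monomial $M_1\cdots M_t$ whenever no $p$-adic carry occurs in forming that product, and the generators of $C$ in Lemma \ref{span3} are chosen precisely so that their products are carry-free and exhaust the basis monomials listed there, every basis monomial of $C$ is the leading term of an element of the subalgebra generated by these $S(M)$.

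For the unmarked side I would appeal to Lemma \ref{11}, Corollary \ref{cor2} and Proposition \ref{styri}, which between them rule out every symmetrized monomial lying outside the $C$-basis. Once marked and unmarked monomials are known to partition all symmetrized monomials, the conclusion is the leading-term descent of Proposition \ref{generators}: given any supersymmetric $f$, its leading term is marked, hence equal to the leading term of a product of the chosen $S(M)$; subtracting a suitable scalar multiple lowers the leading term, and induction on the lexicographic order yields $\mathcal{S}=A$.

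I expect the main obstacle to be not any single computation but the bookkeeping needed to verify that the marked locus (the $C$-basis of Lemma \ref{span3}) and the unmarked locus (the union of the cases in Proposition \ref{styri}, Corollary \ref{cor2} and Lemma \ref{11}) are genuinely complementary and exhaustive over all symmetrized monomials of $Div[x_1,x_2,y_1,y_2]$. The most delicate region is the third bullet of Proposition \ref{styri}, where $r_1>0$, $r_2=1$, $k_2>0$, $s_1\neq p-1$ and $(j_1,j_2)$ has height $h=1$: here unmarkedness does not follow from the crude divisibility test of Lemma \ref{11} but only from the \emph{blow-up} identity of Lemma \ref{blow}, which propagates a nonzero coefficient to a strictly higher symmetrized variable.
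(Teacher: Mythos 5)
Your proposal is correct and follows essentially the same route as the paper's own proof, which likewise establishes the marked monomials via Lemmas \ref{p}, \ref{pat*}, \ref{sest*} and \ref{sedem*}, the unmarked ones via Proposition \ref{styri} (whose third bullet indeed rests on Lemma \ref{blow}, as you note), and then concludes by combining Lemma \ref{span3} with the leading-term descent of Proposition \ref{generators}.
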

\begin{proof}
Lemmas \ref{span3}, \ref{p}, \ref{pat*}, \ref{sest*} and \ref{sedem*} describe all marked monomials.
Proposition \ref{styri} describes all unmarked monomials. Lemma \ref{span3} and Proposition \ref{generators} conclude the proof.
\end{proof}

\end{document}